\newtheorem{theorem}{Theorem}[section]
\newtheorem{lemma}[theorem]{Lemma}
\newtheorem{prop}[theorem]{Proposition}
\theoremstyle{definition}
\newtheorem{definition}[theorem]{Definition}
\newtheorem{example}[theorem]{Example}
\theoremstyle{remark}
\newtheorem{remark}[theorem]{Remark}
\numberwithin{equation}{section}
\begin{document}

\title{Derived Tame Quadratic String Algebras}

\author{Marlon Pimenta Fonseca}
\address{Departamento de Ci\^encias Exatas, DEX,  Universidade Federal de Lavras}
\curraddr{Av. Central UFLA, S/n - Aquenta Sol,CEP 37200-000, Lavras- MG, Brasil}
\email{marlon.fonseca@ufla.br}


\subjclass[2010]{Primary 16G60; Secondary 16E35}

\date{November 14, 2020.} 


\keywords{Representations of algebras, Derived tame algebras, Quadratic string algebras}

\begin{abstract} In this paper we determine the derived representation type of quadratic string algebras and we prove that every derived tame quadratic string algebra whose quiver has cycles is derived equivalent to some skewed-gentle algebra.
\end{abstract}

\maketitle

\section*{Introduction} Throughout the paper $K$ denotes a fixed algebraically closed field. By an algebra we mean a basic, connected and finite dimensional $K$-algebra (associative, with an identity). For an algebra $A$, let $D^b(A)$ be the bounded derived category of the category of finitely generated right modules $\mathrm{mod}\ A$.

During the last years there has been an active study of derived categories. The notions of derived tameness was introduced in \cite{MR1913081} and the tame-wild dichotomy for bounded derived categories of finite-dimensional algebras was established in \cite{Dicotomy}. In particular, the  study of derived representation type of an algebra becomes an important theme in representation theory. This question is well-known for tree algebras \cite{MR1868357,MR1902063}, for algebras with radical squared zero \cite{MR2497950,MR3646293} and for Nakayama algebras \cite{MR3778425,Nakayama}, for example.    

In this paper we consider the class of quadratic string algebra. Recall that $A= KQ/I$ is a \emph{quadratic string algebra} if the following conditions are satisfied: 
\begin{itemize}
	\item[(i)] each vertex in $Q$ is the source of at most two arrows and the target of at most two arrows;
	\item[(ii)] for each arrow $\alpha \in Q_1$ there is at most one arrow $\beta$ such that $t(\alpha)=s(\beta)$  and $\alpha \beta \notin  I$, and at most one arrow $\gamma$ such that $s(\alpha)=t(\gamma)$ and $\gamma \alpha \notin I$;
	\item[(iii)] $I$ is generated by paths of length two. 
\end{itemize}
This class of algebras is natural in our context since it is a subclass of the special biserial algebras and a generalization of gentle algebras. Moreover, the derived category of quadratic string algebras also were studied by Franco,  Giraldo and Rizzo \cite{SAG} with the name of string almost gentle algebras. 

In the order to classify the derived tame quadratic algebras, we introduce a subclass of quadratic string algebras called \emph{good quadratic string algebras} (shortly  gqs algebras) (see Definition \ref{defi-gqs}). This class is ``good'' because it is defined by combinatorial properties and because every gqs algebra is derived tame.        

Recall that if $A$ is an algebra of finite global dimension, then its Euler quadratic form is defined on the Grothendieck group of $A$ by $$\chi_A(\dim\, M)=\sum_{i=0}^{\infty} (-1)^i\mathrm{dim}_K\mathrm{Ext}_A^i(M, M)$$ for any $A$-module $M$.

Our main results are the following theorems.\\

\noindent\textbf{Theorem A.} Every gqs algebra is derived equivalent to a skewed-gentle algebra. In particular, every qgs algebra is derived tame.\\

\noindent\textbf{Theorem B.} Let $A$ be a quadratic string algebra such that $A$ is not a tree. Then $A$ is derived tame if and only if $A$ is a gqs algebra.\\



Note that by Theorems A and B we have the following result.\\


\noindent\textbf{Corollary C.} Let $A$ be a quadratic string algebra such that $A$ is not a tree. If $A$ is derived tame, then $A$ is derived equivalent to some skewed-gentle algebra.  \\


\noindent\textbf{Theorem D.} Let $A$ be a quadratic string algebra. Then $A$ is derived tame if and only if one of the following conditions holds:
\begin{itemize}
	\item[(1)] $A$ is tree and its Euler form is non-negative;
	\item[(2)] $A$ is a gqs algebra.\\ 
\end{itemize}

Note that, if $A$ is a tree then the Theorem D follows from \cite{MR1868357,MR1902063}.\\

The structure of this paper is as follows. In Section 1 we review some preliminary results about derived representation type classification of algebras. Moreover, we also recall the definitions of Galois coverings, mutations and blowing-up of an algebra. In Section 2 we recall the definition of gentle and skewed-gentle algebras and prove the Theorem A. At last, in the Section 3 we prove the Theorems B  and D.

\section{Preliminaries}
\subsection{Algebras, categories and quivers} Following \cite{MR654725}, a \emph{locally bounded category}  $\mathcal{C}$ is a $K$-linear category satisfying the following conditions: 
\begin{itemize}
	\item[(i)]different objects in $\mathcal{C}$ are not isomorphic;
	\item[(ii)]$\mathcal{C}(x,x)$ is a local ring for every $x \in \mathcal{C}$;
	\item[(iii)]$\dim_K \sum_{y \in \mathcal{C}}\mathcal{C}(x,y) + \dim_K \sum_{y \in \mathcal{C}}\mathcal{C}(y,x)< \infty$ for all $x \in \mathcal{C}$.               
\end{itemize} 
Moreover, if the number of objects in $\mathcal{C}$ is finite, then $\mathcal{C}$ is called a \emph{bounded category}.

For a locally bounded category $\mathcal{C}$, denote by $\mathrm{Mod}\ \mathcal{C}$ the category of all  contravariant functors from $\mathcal{C}$ to the category of $K$-vector spaces, and denote by $\mathrm{mod}\ \mathcal{C}$ the full subcategory of $\mathrm{Mod}\ \mathcal{C}$ formed by the finite-dimensional contravariant functors $M$ (i.e. $\sum_{x \in \mathcal{C}}\dim_K M(x)< \infty$). 

\begin{remark}It is clear that every bounded category $\mathcal{C}$ defines an basic finite-dimensional algebra $ \bigoplus  \mathcal{C}$ formed by quadratic matrices $a = (a_{yx})_{ x,y \in \mathcal{C}}$ such that $a_{yx} \in \mathcal{C}(x, y)$. Conversely, for every basic finite-dimensional algebra $A$ (associative with unity) can be attach to bounded category  $\mathcal{C}_A$ whose objects are a complete set of  primitive orthogonal idempotents of $A$, the space $ yAx$ as set $\mathcal{C}_A(x, y)$ of morphisms from $x$ to $y$ and the composition is induced by the multiplication in $A$. It is easy to see that $A\cong \bigoplus \mathcal{C}_A$ and  the categories $\mathrm{mod}\ \mathcal{C}_A$ and $\mathrm{mod}\ A$ are equivalents. By an abuse of notation, we shall identify an algebra $A$ with its bounded category $\mathcal{C}_A$.
\end{remark}

A \emph{quiver} is a tuple $Q=(Q_0, Q_1, s, t)$, where $Q_0$ and $Q_1$ are sets and $s,t$ are maps $s, t : Q_1 \rightarrow Q_0$. The elements of $Q_0$ and $Q_1$ are called \emph{vertices} and \emph{arrows} of $Q$, respectively, and the \emph{type} of $Q$ is its underlying graph.  We say that $Q$ is finite if $Q_0$ and $Q_1$ are both finite sets, and we say that $Q$ is connected if its underlying graph is a connected graph. In this paper, all the quivers will be considered connected quivers.  For every arrow $\alpha \in Q_1$ the vertex $s(\alpha)$ is its source, while $t(\alpha)$ is its target. An arrow $\alpha \in Q_1$ is called \emph{loop} if $s(\alpha)=t(\alpha)$. A quiver $\Delta$ is \emph{subquiver} of $Q$ if $\Delta_i \subseteq Q_i$ for $i=0,1$. A subquiver $\Delta$ of $Q$ is \emph{full} if $\Delta_1=\{\alpha \in Q_1| s(\alpha), t(\alpha) \in \Delta_0\}$.

For every arrow $\alpha:x \rightarrow y$ of $Q$, we denote by $\alpha^{-1}:y \rightarrow x$ its formal inverse. A \emph{walk} $w$ in $Q$ of length $l(w)=n$  is a sequence $w=\alpha_1^{\epsilon_1}\alpha_2^{\epsilon_2}\cdots \alpha_n^{\epsilon_n}$ of arrows and formal inverses of arrows such that $s(\alpha_{i+1}^{\epsilon_{i+1}})=t(\alpha_{i}^{\epsilon_i})$ for any $1\leq i \leq n-1$. The source and the target of a walk are defined in the natural way. The concatenation $ww'$  of two walks $w$, $w'$ in $Q$ is defined in the natural way whenever $s(w') = t(w)$. A \emph{path} in $Q$ is a walk $ w=\alpha_1\alpha_2\cdots \alpha_n$ constituted only by arrows of $Q$. Moreover, to every vertex $x$, one associates a \emph{trivial path} $e_x$ with $s(e_x)=t(e_x)=x$ which is of length 0 by convention. We denote by $Q_2$ the set of paths of length 2. A walk $w \in Q$ is called \emph{closed} if $s(w)=t(w)$; \emph{reduced} if $w$ is either a trivial path, or $w=\alpha_1^{\epsilon_1}\cdots \alpha_n^{\epsilon_n}$ such that $\alpha_{i+1}^{\epsilon_{i+1}}\not= \alpha_i^{-\epsilon_i}$ for all $1 \leq i \leq n-1$; and a \emph{cycle} if $w$ is non-trivial, reduced and closed.

Given a quiver $Q$, the \emph{path category} $KQ$ is defined as follows: its object class is $Q_0$ and given $x,y \in Q_0$, the morphism set $KQ(x, y)$ is the $K$-vector space having as basis the set of paths from $y$ to $x$ and the composition is induced by concatenation of paths. Denote by $\mathcal{R}_Q$ the two-sided ideal of $KQ$ generated by $Q_1$. An ideal $I$ of $KQ$ is said to be \emph{admissible} if for every $x,y \in Q_0$ one has $I(x,y) \subseteq \mathcal{R}_Q^2(x,y)$ and for any $x \in Q_0$ there exists some $n_x \geq 2$ such that $I$ contains all the paths with length at least $n_x$ and with source or target $x$. If $Q$ is a finite quiver and $I$ is an admissible ideal of $KQ$, the pair $(Q,I)$ is called \emph{bounded quiver}. It is well known that any finite-dimensional $K$-algebra is Morita equivalent to a quotient $KQ/I$ where $I$ is an admissible ideal. Moreover, if $KQ/I$ is a finite-dimensional $K$-algebra, then the \emph{radical} (Jacobson) of $KQ/I$ is given by $\mathrm{rad}\, KQ/I= \mathcal{R}_Q/I$.

A \emph{quiver-morphism} $\phi: Q' \rightarrow Q$ consists of two maps $\phi_0:Q'_0\rightarrow Q_0$ and \linebreak $\phi_1:Q'_1\rightarrow Q_1$ such that for every arrow $\alpha:i \rightarrow j$ of $Q'$, we have that \linebreak $\phi_1(\alpha): \phi_0(i) \rightarrow \phi_0(j)$. Moreover, if $I'$ and $I$ are admissible ideals of $KQ'$ and $KQ$, respectively, and $\phi(I') \subset I$, we say that $\phi: (Q',I') \rightarrow (Q,I)$ is a \emph{morphism of quiver with relations}. In this case, $\phi$ induces a homomorphism of $K$-algebras $\phi: KQ'/I' \rightarrow KQ/I$.   


For a vertex $x \in Q$, consider the sets $$x^{+}=\{\alpha \in Q_1| s(\alpha)=x\} \ \  \mbox{and} \ \ x^-=\{\alpha \in Q_1| t(\alpha)=x\}.$$ 
One says that $Q$ is \emph{locally finite} if the sets $x^+$ and $x^-$ are both finite for all $x\in Q_0$. Note that if $Q$ is a locally finite quiver and $I$ is an admissible ideal of $KQ$, then $KQ/I$ is a locally bounded category. Moreover, if $Q$ is finite, then $KQ/I$ is a finite-dimensional algebra whose  the unit is $\sum_{x \in Q_0}e_x$, where $e_x=\varepsilon_x + I$ is the idempotent corresponding to the vertex $x$.

Given an algebra $A=KQ/I$ and let $x$ be a vertex of $Q$. We denote by $P_x=e_xA$ the corresponding indecomposable projective $A$-module associate to $x$. It is well-known that any arrow $\alpha: x\rightarrow y$ gives rise to a map $\alpha:P_y \rightarrow P_x$ given by left multiplication by $\alpha$. Moreover, this map gives a natural isomorphism $e_yAe_x \cong \mathrm{Hom}_A(P_x, P_y)$.

\subsection{Derived representation type}
It is well-known that for an algebra $A$ the category $D^b(A)$ can be identified with the homotopy category $\mathrm{K}^{-,b}(\mathrm{proj}\ A)$ of bounded above complexes of finitely generated projective right $A$-modules with bounded cohomologies. Recall that the \emph{cohomology dimension vector} of a complex $X \in D^b(A)$ is the vector $\mathrm{h-dim}(X)=(\mathrm{dim}_K H^i(X))_{i \in \mathbb{Z}}$, where $H^i(X)$ is the $i$-th cohomology space of $X$. 

\begin{definition}
Let $A$ be an algebra. 
	\begin{itemize}
		\item[1)] \cite{MR1913081} $A$ is said to be \emph{derived tame} if for any $n=(n_i)_{i \in \mathbb{N}}$ there exist a localization $R=K[x]_f$ with respect to some $f \in K[x]$ and a finite number of bounded complexes of $R$-$A$-bimodules $X_1, \cdots, X_k$ such that each $X^i_ j$ is finitely generated free over $R$ and (up to isomorphism) all but finitely many indecomposable object of cohomology dimension $n$ in $D^b(A)$ are of the form $S\otimes_R X_i$ for some $i=1, \cdots, n$ and some simple $R$-module $S$.  \\
		
		\item[2)] \cite{Dicotomy}  $A$ is said to be \emph{derived wild} if there exists a bounded complex $M$ of $K\langle x,y\rangle$-$A$-bimodules such that each $M^i$ is  free and of finite rank as left $K\langle x,y\rangle$-module and the functor $-\oplus_{K\langle x,y\rangle}M: \mathrm{mod}\, K\langle x,y\rangle \rightarrow \mathrm{mod}\ A$ preserves indecomposability and isomorphism classes. 
	\end{itemize} 
\end{definition} 

Two algebras $A$ and $B$ are said to be \emph{derived equivalent} if the respective derived categories $D^b(A)$ and $D^b(B)$ are equivalent as triangulated categories. Moreover, by an important result due to Rickard \cite{MR1002456},
this happens exactly when $B \cong (\mathrm{End} (T))^{op}$  where $T \in \mathrm{K}^b (\mathrm{proj}\ A)$ is a complex (called \emph{tilting complex}) satisfying:
	\begin{itemize}
		\item[(i)]for all $ i \not=0$, $\mathrm{Hom}_{\mathrm{K}^b(\mathrm{proj}\,A)}(T, T[i])=0$ (where $[-]$ denote the shift functor);
		\item[(ii)]$T$ generates $\mathrm{K}^b(\mathrm{proj}\ A)$ as a triangulated category.
	\end{itemize} 

\begin{remark} We recall from \cite[Theorem A]{MR1913081} that derived tameness is preserved under derived equivalence.
\end{remark}	

Let us recall from \cite{MR2497950} the following result. 

\begin{theorem}\label{rad}
Let $A$ be an algebra with radical squared zero, that is, $A\cong KQ/ \mathcal{R}_Q^2$. Then $A$ is derived tame if and only if $Q$ is either Dynkin or Euclidean graph.  \\
\end{theorem}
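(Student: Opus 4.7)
The plan is to reduce the theorem to the classical derived representation-type classification for hereditary algebras, namely that a hereditary path algebra $KQ'$ is derived tame if and only if the underlying graph of $Q'$ is Dynkin or Euclidean (a result of Happel). The idea is to compare $A=KQ/\mathcal{R}_Q^2$ with a suitable hereditary algebra whose quiver has the same underlying graph type.

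First, I would attach to $Q$ its \emph{separated quiver} $Q^s$: take $Q^s_0 = Q_0 \times \{0,1\}$, and for every arrow $i \to j$ of $Q$ put one arrow $(i,0)\to (j,1)$ in $Q^s$. Two combinatorial observations are needed: (a) $Q^s$ is acyclic, so $KQ^s$ is hereditary, and (b) the underlying graph of $Q^s$ is of Dynkin (respectively Euclidean) type exactly when the underlying graph of $Q$ is. With this, the hereditary algebra $KQ^s$ becomes the natural candidate to which the classification should be transferred.

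Second, I would exploit the relation $\mathrm{rad}^2 A = 0$ to describe indecomposable objects of $D^b(A)\cong \mathrm{K}^{-,b}(\mathrm{proj}\, A)$. Since each simple $A$-module $S_x$ has a projective cover fitting in a short exact sequence $0 \to \bigoplus_{\alpha: x \to y} S_y \to P_x \to S_x \to 0$, the minimal projective resolutions are entirely controlled by an alternation of ``top'' and ``socle'' data that is precisely encoded by $Q^s$. The goal here is to build a derived-representation-type preserving comparison between $D^b(A)$ and a suitable derived category associated to $KQ^s$. Two natural routes are available: either construct an explicit tilting/silting object in $\mathrm{K}^b(\mathrm{proj}\, A)$ whose endomorphism algebra is related to $KQ^s$, or pass to the repetitive algebra $\hat{A}$ and combine Happel's embedding $D^b(A) \hookrightarrow \underline{\mathrm{mod}}\,\hat{A}$ with a stable equivalence linking $\underline{\mathrm{mod}}\,\hat{A}$ to a category of representations of $Q^s$.

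Third, with such a comparison in place, the dichotomy follows: $A$ is derived tame iff the hereditary algebra on $Q^s$ is derived tame iff the underlying graph of $Q^s$, equivalently of $Q$, is Dynkin or Euclidean. The main obstacle I foresee is precisely the middle step when $Q$ contains oriented cycles: in that case there is no honest tilting from $D^b(A)$ to $D^b(KQ^s)$, so one is forced to work through the repetitive algebra and to control dimension vectors of indecomposable complexes by hand. For the ``only if'' direction, when $Q$ is neither Dynkin nor Euclidean, the strategy would be to exhibit inside $D^b(A)$ a family of indecomposables parametrized by a wild hereditary algebra (for instance arising from a convex subquiver whose separated quiver supports a known wild representation type), in the spirit of the wildness criteria used in the tame--wild dichotomy of \cite{Dicotomy}; verifying such embeddings carefully is the technical heart of the argument.
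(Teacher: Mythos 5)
First, a point of orientation: the paper does not prove Theorem \ref{rad} at all --- it is recalled as a known result from \cite{MR2497950} --- so the only question is whether your sketch would stand on its own as a proof, and it would not. The load-bearing combinatorial claim, your observation (b) that the separated quiver $Q^s$ has Dynkin (resp.\ Euclidean) underlying graph exactly when $Q$ does, is false in both directions: for the oriented $n$-cycle (underlying graph the Euclidean $\widetilde{A}_{n-1}$) the separated quiver is a disjoint union of $A_2$'s, hence Dynkin; and for the quiver with one vertex and two loops (neither Dynkin nor Euclidean) the separated quiver is the Kronecker quiver $\widetilde{A}_1$, which is Euclidean.

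The deeper problem is that the separated quiver is the wrong invariant for this theorem. By Gabriel's classical criterion, $Q^s$ governs the representation type of $\mathrm{mod}\,A$ (finite iff $Q^s$ is a union of Dynkin graphs, tame iff a union of Dynkin and Euclidean graphs), and module type and derived type genuinely diverge for radical square zero algebras. The local algebra $K\langle x,y\rangle/(x,y)^2$ is module tame (separated quiver $=$ Kronecker), yet it is derived wild --- its Galois covering is a radical-square-zero category on the $4$-regular tree, which contains finite full subcategories $K\Delta/\mathcal{R}_\Delta^2$ with $\Delta$ a tree that is neither Dynkin nor Euclidean, so Lemma \ref{lemma1} applies --- exactly as Theorem \ref{rad} asserts. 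Consequently no comparison transporting derived representation type between $A$ and the hereditary algebra $KQ^s$ can exist: any argument concluding ``$A$ is derived tame iff $KQ^s$ is derived tame'' would contradict the theorem you are trying to prove. (A quick numerical obstruction to the tilting route you suggest: $K_0(A)$ has rank $\lvert Q_0\rvert$ while $K_0(KQ^s)$ has rank $2\lvert Q_0\rvert$, so the two algebras are never derived equivalent.) Your ``only if'' strategy of exhibiting wild subproblems is closer in spirit to how such results are actually established (and to the covering/subcategory machinery of Lemma \ref{lemma1} used throughout this paper), but as written it again routes the wildness test through separated quivers of subquivers, so it inherits the same defect; the correct combinatorial datum is the underlying graph of $Q$ itself, detected through coverings and explicit complexes rather than through $Q^s$.
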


\subsection{Cleaving functors and Galois coverings} Let $F:B \rightarrow A$ be a $K$-linear functor between two locally bounded categories. The \emph{restriction functor} \linebreak $F_{\bullet}:\mathrm{Mod}\, A \rightarrow \mathrm{Mod}\, B$, $F_{\ast}(M)=MF$ admits a left adjoint $F^\bullet: \mathrm{Mod}\, B \rightarrow \mathrm{Mod}\, A$, called \emph{extension functor},  which is up to natural isomorphism well-defined by requiring that $F^\bullet$ is a right exact and coproduct preserving functor such that $F^\bullet A(-, a) = B(-, Fa)$ for all objects $a$ of $A$.  According to \cite{MR799266}, the functor $F$ is called \emph{cleaving} if the canonical natural transformation  $\Phi_F: \mathrm{Id}_{\mathrm{Mod}\, B} \rightarrow  F_\bullet F^\bullet$ admits a natural retraction, i.e., if there is a morphism $\Psi: F_\bullet F^\bullet \rightarrow \mathrm{Id}_{\mathrm{Mod}\, B}$  such that	$\Psi(M) \Phi_F(M) = \mathrm{Id}_M$ for each $M \in \mathrm{Mod}\,B$.\\

The best known examples of cleaving functors are the  Galois coverings. Following  \cite{MR853234}, a \emph{Galois covering} of a bounded quiver $(Q,I)$ is a morphism of quivers with relations  $\phi: (\widehat{Q}, \widehat{I}) \rightarrow (Q, I)$ together with a group $G$ of automorphisms of $(\widehat{Q}, \widehat{I})$ satisfying:
\begin{itemize}
	\item[i)] $G$ acts freely on $\widehat{Q}_0$;
	\item[ii)] $\phi g=\phi$ for every $g \in G$ and $\phi(x)=\phi(y)$ if, and only if,  $y=gx$ for some $g \in G$; 
	\item[iii)] $\phi$ induces  bijections $x^+ \longrightarrow p(x)^+$ e $x^- \longrightarrow p(x)^-$, for all $x \in \widehat{Q}_0$;
	\item[iv)] $I$ is the ideal generated by the elements of the form $p(\rho)$, with $\rho \in \widehat{I}$. \\
\end{itemize}

A Galois covering of bonded quiver $\phi:(\widehat{Q}, \widehat{I}) \rightarrow (Q, I)$ induces naturally a Galois covering functor between $K$-categories $F: K\widehat{Q}/\widehat{I} \rightarrow KQ/I$ in the sense of \cite{MR2170541}. \\ 

In this paper we have a particular interest on Galois coverings of \emph{monomial algebras}, that is, algebras of type $KQ/I$ where $I$ is generated by paths. Following \cite{MR654705} (see also \cite{MR724038}), given $(Q, I)$ a bounded quiver where $I$ is generated by paths, there is  Galois covering  $F:(\tilde{Q},\tilde{I})\rightarrow (Q,I)$ defined by fundamental group $\pi_1(Q, I)$. More precisely, fixed $x_0 \in Q$,  let $\pi_1(Q, I, x_0)$ be the set of reduce walks of $Q$ starting and ending at $x_0$. Of course the concatenation of walks induces a group structure to $\pi_1(Q, I, x_0)$. Moreover, since $Q$ is connected quiver, so $\pi_1(Q, I, x_0)\cong \pi_1(Q, I, x_1)$ for any $x_1 \in Q_0$. Thus $\pi_1(Q, I, x_0)$ will be denoted by $\pi_1(Q, I)$, and we say that it is the \emph{fundamental group} of $(Q, I)$. 

Now, the vertices of $\tilde{Q}$ are the reduced walks starting at a fixed vertex $x_0\in Q_0$, and for all $w, w \in \tilde{Q}_0$  there is an arrow $[\alpha, w] \in \tilde{Q}_1$ from $w$ to $w'$ whenever $ w'= w\alpha$ with $\alpha \in Q_1$. Note that there is at most one arrow $[\alpha, w]:w\rightarrow w'$ since the arrow $\alpha$ is unique, if it exists. This provides a quiver-morphism $p: \tilde{Q} \rightarrow Q$ defined by $p(w):=t(w)$ and $p([\alpha, w])=\alpha$. Finally, the ideal $I$ is defined to be generated by the inverse images under $p$ of the generators of $I$.

\begin{remark}
\begin{itemize}
	\item[(1)] Note that if $(Q, I)$ is bounded quiver where $Q$ is connected and $I$ is generated by paths, then $\tilde{Q}$ is a connected tree and the ideal $\tilde{I}$ is generated by paths. Moreover, if $Q$ has cycles, then $\tilde{Q}$ is a locally finite quiver with an infinite number of vertices.\\
	\item[(2)] In this paper, all Galois coverings are supposed to be Galois covering defined by fundamental group.\\
	\item[(3)] It is well-known that Galois coverings are cleaving functors (see \cite[3.2]{MR332887} for example).
\end{itemize}	
  \end{remark} 

The next result follows from \cite[3.8(b)]{MR799266}.
\begin{lemma}
	\begin{itemize}
		\item[(a)] Let $\mathcal{C}$ be a locally bounded category and  $\mathcal{C'}$ a full subcategory of $\mathcal{C}$. The inclusion  $\mathcal{C'} \hookrightarrow \mathcal{C}$ is a cleaving functor.
		\item[(b)] The composition of two cleaving functors is cleaving.
	\end{itemize}
\end{lemma}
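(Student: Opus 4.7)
For part (a), my plan is to establish the stronger statement that the canonical unit $\Phi_F$ is itself a natural isomorphism when $F:\mathcal{C}'\hookrightarrow \mathcal{C}$ is the inclusion of a full subcategory; the required retraction $\Psi$ is then simply $\Phi_F^{-1}$. I would verify this first on representables: for $x\in\mathcal{C}'$ the defining property of the extension functor gives $F^\bullet\mathcal{C}'(-,x)=\mathcal{C}(-,Fx)$, and restricting back yields, at any $y\in\mathcal{C}'$,
\[
(F_\bullet F^\bullet \mathcal{C}'(-,x))(y)=\mathcal{C}(Fy,Fx)=\mathcal{C}'(y,x),
\]
where the last equality uses that $\mathcal{C}'$ is full. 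Hence $\Phi_F$ is an isomorphism on representables. Since every $M\in\mathrm{Mod}\,\mathcal{C}'$ admits a presentation as a cokernel of a map between coproducts of representables, and since both $\mathrm{Id}$ and $F_\bullet F^\bullet$ are right exact and preserve coproducts ($F^\bullet$ is a left adjoint, and restriction along $F$ commutes with colimits), a standard five-lemma argument promotes the isomorphism to all of $\mathrm{Mod}\,\mathcal{C}'$.

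For part (b), let $F:\mathcal{A}\to\mathcal{B}$ and $G:\mathcal{B}\to\mathcal{C}$ be cleaving, with retractions $\Psi^F$ of $\Phi_F$ and $\Psi^G$ of $\Phi_G$. The plan is to exploit the compatibility of adjoint pairs with composition: one has $(GF)_\bullet=F_\bullet G_\bullet$ and, by uniqueness of left adjoints, $(GF)^\bullet\cong G^\bullet F^\bullet$, with the unit of the composite adjunction factoring as $\Phi_{GF}=(F_\bullet\Phi_G F^\bullet)\circ \Phi_F$. I would then define
\[
\Psi\;:=\;\Psi^F \circ (F_\bullet\Psi^G F^\bullet)\colon F_\bullet G_\bullet G^\bullet F^\bullet \longrightarrow \mathrm{Id}_{\mathrm{Mod}\,\mathcal{A}}
\]
and compute
\[
\Psi\circ\Phi_{GF}\;=\;\Psi^F\circ F_\bullet(\Psi^G\circ\Phi_G) F^\bullet\circ \Phi_F\;=\;\Psi^F\circ\Phi_F\;=\;\mathrm{Id},
\]
invoking the retraction identities successively for $G$ and for $F$.

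The main obstacle lies in part (a): although the slogan ``fully faithful inclusions induce fully faithful left Kan extensions'' sounds routine, a careful check is needed in the present setting where $\mathcal{C}$ is locally bounded but not small, and one must ensure that the passage from representables to arbitrary modules via presentations and coproducts remains valid inside $\mathrm{Mod}\,\mathcal{C}'$ (rather than merely in $\mathrm{mod}\,\mathcal{C}'$). Part (b) is by contrast essentially formal once the compatibility of the adjoint pair $(F^\bullet,F_\bullet)$ with composition has been recorded, so the substantive work is concentrated in (a).
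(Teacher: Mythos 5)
Your proposal is correct, but it is worth noting that the paper itself gives no argument at all for this lemma: it is stated as a direct citation of \cite[3.8(b)]{MR799266}, so there is no ``paper proof'' to match. What you supply is a self-contained categorical derivation, and it is sound. In (a) you in fact prove more than cleaving requires, namely that the unit $\Phi_F$ is invertible for a full subcategory inclusion (so $\Psi=\Phi_F^{-1}$); the check on representables uses both fullness and faithfulness of the inclusion (faithfulness is automatic here), and you should make explicit that the unit evaluated at $\mathcal{C}'(-,x)$ is, via Yoneda and the defining property $F^\bullet\mathcal{C}'(-,x)=\mathcal{C}(-,Fx)$, precisely the map induced by $F$ on Hom-spaces --- then your passage from representables to arbitrary modules is the standard one, since $F^\bullet$ preserves all colimits (left adjoint), $F_\bullet$ is exact and preserves coproducts (everything pointwise), and every module in $\mathrm{Mod}\,\mathcal{C}'$ has a presentation by coproducts of representables; smallness is not an issue because a locally bounded category has a set of objects. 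In (b) your computation is the standard composition-of-adjunctions argument; the only point to record is that the isomorphism $(GF)^\bullet\cong G^\bullet F^\bullet$ coming from uniqueness of left adjoints identifies $\Phi_{GF}$ with $(F_\bullet\Phi_G F^\bullet)\circ\Phi_F$, so the retraction you construct for the composite transfers to the canonical unit of $(GF)^\bullet$; with that remark the identity $\Psi\circ\Phi_{GF}=\mathrm{Id}$ follows exactly as you compute. The benefit of your route is that it makes the lemma independent of the reference and exhibits (a) as an instance of the general fact that extension along a fully faithful functor is fully faithful; the cost is only the bookkeeping above, which is routine.
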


Let us recall from \cite{MR3778425} the following theorem. 
\begin{theorem}\label{Zhang}
Let $F: B \rightarrow A$ be a cleaving functor between bounded categories with $\mathrm{gl.dim}\,B< \infty$. Then $B$ derived wild  implies $A$ derived wild.  
\end{theorem}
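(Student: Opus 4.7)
The plan is to transport a witness of derived wildness for $B$ through the derived extension functor to obtain one for $A$. The cleaving functor $F: B \to A$ yields the extension functor $F^\bullet: \mathrm{Mod}\, B \to \mathrm{Mod}\, A$ and its exact right adjoint $F_\bullet$. Because $\mathrm{gl.dim}\, B < \infty$, every object of $D^b(B)$ admits a bounded projective resolution, so $F^\bullet$ has a well-defined left derived functor $\mathbf{L} F^\bullet : D^b(B) \to D^b(A)$ landing in bounded complexes.

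First I would show that $\mathbf{L} F^\bullet$ preserves indecomposability and reflects isomorphism classes. The cleaving hypothesis supplies a natural retraction $\Psi : F_\bullet F^\bullet \to \mathrm{Id}_{\mathrm{Mod}\, B}$ of the unit $\Phi_F$. Since $F_\bullet$ is exact and $\Psi$ is natural, applying it term-by-term to a bounded projective resolution promotes $\Psi$ to a natural retraction of the derived unit $\mathrm{Id}_{D^b(B)} \to F_\bullet \circ \mathbf{L} F^\bullet$. Thus every $X \in D^b(B)$ is a direct summand of $F_\bullet \mathbf{L} F^\bullet X$; by Krull--Schmidt for bounded complexes over a finite-dimensional algebra, any decomposition of $\mathbf{L} F^\bullet X$ in $D^b(A)$ must come from a decomposition of $X$, and a similar summand-matching argument reflects isomorphisms.

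Next I would construct the wild bimodule witness for $A$. Let $M$ be a bounded complex of $K\langle x, y \rangle$-$B$-bimodules witnessing derived wildness of $B$, with each $M^i$ free of finite rank over $K\langle x, y \rangle$. Viewing $A$ as a right $B$-module via $F$, choose a bounded resolution $P^\bullet \to A$ by finitely generated free right $B$-modules, which exists because $\mathrm{gl.dim}\, B < \infty$ and $B$ is finite-dimensional. Form $N := \mathrm{Tot}(M \otimes_B P^\bullet)$, a bounded complex of $K\langle x, y \rangle$-$A$-bimodules. Since each $P^j$ is a finitely generated free right $B$-module, each $M^i \otimes_B P^j$ is a finite direct sum of copies of $M^i$ and is therefore free of finite rank over $K\langle x, y \rangle$, and so are the terms of $N$. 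By construction, $- \otimes_{K\langle x, y \rangle} N : \mathrm{mod}\, K\langle x, y \rangle \to D^b(A)$ is naturally isomorphic to $\mathbf{L} F^\bullet \circ (- \otimes_{K\langle x, y \rangle} M)$.

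Combining the two steps, $N$ witnesses derived wildness of $A$. The main obstacle is the first step: promoting the module-level retraction supplied by the cleaving property to the derived setting and transferring the Krull--Schmidt summand-matching arguments from $\mathrm{mod}\, B$ to $D^b(B)$. The finite-global-dimension hypothesis is essential precisely here, since it allows one to replace every object of $D^b(B)$ by a bounded projective resolution on which $F^\bullet$ and $\Psi$ act term-by-term, and it is also what keeps $\mathbf{L} F^\bullet$ inside $D^b$ so that the resulting bimodule complex $N$ is itself bounded.
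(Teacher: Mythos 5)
You should note first that the paper does not prove this theorem at all: it is quoted from \cite{MR3778425}, so your argument has to stand on its own, and it has a genuine gap at its central step. From the retraction $\Psi$ you do correctly obtain that every $X\in D^b(B)$ is naturally a direct summand of $F_\bullet\mathbf{L}F^\bullet X$ (lifting $\Psi$ termwise over a bounded projective resolution is fine, and this is one place where $\mathrm{gl.dim}\,B<\infty$ is used). But this split-summand property does \emph{not} imply that $\mathbf{L}F^\bullet$ preserves indecomposability or reflects isomorphism classes. If $X$ is indecomposable and $\mathbf{L}F^\bullet X\cong Y_1\oplus Y_2$ with both $Y_i\neq 0$, Krull--Schmidt in $D^b(B)$ only tells you that $X$ occurs as a summand of $F_\bullet Y_1$ or of $F_\bullet Y_2$; since $F_\bullet\mathbf{L}F^\bullet X$ generally has many summands besides $X$, nothing forces $Y_1$ or $Y_2$ to vanish. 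Similarly, from $\mathbf{L}F^\bullet(V\otimes_{K\langle x,y\rangle}M)\cong\mathbf{L}F^\bullet(W\otimes_{K\langle x,y\rangle}M)$ you only learn that $V\otimes M$ is a direct summand of $F_\bullet\mathbf{L}F^\bullet(W\otimes M)$, which does not yield $V\otimes M\cong W\otimes M$. A cleaving functor is in general neither full nor faithful (a full subcategory inclusion is already cleaving), so there is no reason for $\mathbf{L}F^\bullet$ to be a representation embedding, and your complex $N$ need not witness derived wildness of $A$. The claim you assert in your first step is precisely the hard content of the theorem, and the Krull--Schmidt ``summand-matching'' argument does not deliver it.

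The known route is the contrapositive through the derived tame--wild dichotomy rather than a direct transport of the wild bimodule: assuming $A$ is not derived wild, the dichotomy of \cite{Dicotomy} makes $A$ derived tame; one then restricts the parametrizing complexes of $R$-$A$-bimodules along $F_\bullet$ and uses exactly your split-summand statement (every indecomposable $X\in D^b(B)$ is a summand of $F_\bullet\mathbf{L}F^\bullet X$, with cohomology dimensions controlled via bounded projective resolutions, which is where $\mathrm{gl.dim}\,B<\infty$ really enters) to parametrize almost all indecomposables of $D^b(B)$ in each cohomology dimension vector, so $B$ is derived tame and hence not derived wild. Your construction of $N$ as the total complex of $M\otimes_B P^\bullet$ is essentially a correct computation of $\mathbf{L}F^\bullet\circ(-\otimes_{K\langle x,y\rangle}M)$ (modulo the side of the resolution: $A$ should be resolved as a $B$-module on the side over which $F^\bullet$ tensors, and f.g.\ projective rather than free terms suffice, since finitely generated projective $K\langle x,y\rangle$-modules are free), but without the missing embedding property it does not prove the statement.
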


\begin{lemma}\label{lemma1}
Let $A=KQ/I$ be  a  monomial algebra and $\tilde{A}=K\tilde{Q}/\tilde{I}$ its Galois covering.

\begin{itemize}
	\item[(a)] If $\tilde{A}$ has a full, bounded and derived wild subcategory $\widehat{A}$, then $A$ is also derived wild.
	\item[(b)] If $\tilde{A}$ has a full and bounded subcategory $\widehat{A}\cong K \Delta/ \mathcal{R}_{\Delta}^2$ such that $\Delta$ is neither a Dynkin nor an Euclidean graph, then $A$ is derived wild.
\end{itemize} 
\end{lemma}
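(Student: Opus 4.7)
The plan is to reduce both parts to a single application of Theorem \ref{Zhang}, via a suitable cleaving functor landing in $A$.

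For part (a), the first step is to exhibit the composite
$$\widehat{A} \hookrightarrow \tilde{A} \rightarrow A,$$
where the second arrow is the Galois covering functor $F$, as a cleaving functor. The inclusion is cleaving by part (a) of the preceding lemma, $F$ is cleaving by item (3) of the preceding remark, and the composition is cleaving by part (b) of the preceding lemma. The second step is to verify $\mathrm{gl.dim}\,\widehat{A}<\infty$, the hypothesis required by Theorem \ref{Zhang}. By item (1) of the same remark, $\tilde{Q}$ is a (connected) tree, so any full subquiver of $\tilde{Q}$ is acyclic; since $\widehat{A}$ is bounded, it is then a triangular finite-dimensional algebra whose global dimension is automatically finite. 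With these two ingredients in place, Theorem \ref{Zhang} transfers derived wildness from $\widehat{A}$ to $A$.

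For part (b), I would first observe that the hypothesis forces $\widehat{A}$ to be derived wild. Indeed, Theorem \ref{rad} says that $K\Delta/\mathcal{R}_\Delta^2$ is derived tame precisely when $\Delta$ is Dynkin or Euclidean; since $\Delta$ is neither, and since the tame--wild dichotomy of \cite{Dicotomy} applies to the bounded, finite-dimensional category $\widehat{A}$, we conclude that $\widehat{A}$ is derived wild. Part (a) then closes the argument.

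The point that requires the most care is the finite-global-dimension check in part (a): working inside the cover $\tilde{A}$ rather than inside $A$ itself is essential, because $A$ may have oriented cycles (and, for the radical-square-zero examples relevant to (b), will typically have infinite global dimension), while $\tilde{Q}$ being a tree guarantees that every full bounded subcategory of $\tilde{A}$ is automatically triangular. Everything else is a formal concatenation of cleaving functors combined with the two cited external inputs.
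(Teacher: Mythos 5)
Your proposal is correct and follows essentially the same route as the paper: the composite $\widehat{A}\hookrightarrow\tilde{A}\rightarrow A$ is cleaving, $\mathrm{gl.dim}\,\widehat{A}<\infty$ because $\tilde{Q}$ is a tree and $\widehat{A}$ is bounded, and Theorem \ref{Zhang} transfers derived wildness, with (b) reduced to (a) via Theorem \ref{rad}. Your only additions are welcome clarifications the paper leaves implicit, namely the acyclicity/triangularity reason for finite global dimension and the explicit appeal to the tame--wild dichotomy to pass from ``not derived tame'' to ``derived wild'' in (b).
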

\begin{proof}
(a) Let $\widehat{F}: \widehat{A} \rightarrow A$ be the composition of the canonical inclusion  $\widehat{A} \hookrightarrow \tilde{A}$ and the Galois covering $F:\tilde{A} \rightarrow A$, thus $\widehat{F}$ is a cleaving functor. Since $\tilde{Q}$ is a tree and $\widehat{A}$ is bounded, then $\mathrm{gl.dim}\, \widehat{A}< \infty$. Therefore $A$ is derived wild  by Theorem \ref{Zhang}.\\

(b) We get that $\widehat{A}$ is derived wild by Theorem \ref{rad}, hence $A$ is derived wild by item (a).
\end{proof}

\begin{example}Let $A=KQ/I$ be the algebra where
	\begin{center}		
		\begin{tikzpicture}
		\node  (d) at (-1, 1) {$Q:$};
		\node  (a) at (0, 0) {$\bullet$};
		\node  (b) at (2, 0) {$\bullet$};
		\node  (c) at (1, 1) {$\bullet$};
		\draw [->] (a) -- node[above] { $\alpha$}  (c) ;
		\draw [->] (c) -- node[above] { $\beta$} (b) ;
		\draw [->] (a) to [out=30,in=150] node[above] { $\delta$} (b);
		\draw [->] (b) to [out=210,in=330] node[below] { $\gamma$} (a);
		\end{tikzpicture}	
	\end{center}
and $I=\langle\delta\gamma, \alpha\beta, \beta\gamma\rangle$. Putting $h=\gamma\alpha$, the Galois covering of $A$ has a full bounded subcategory $\widehat{A}=K\widehat{Q}/\mathcal{R}_Q^2$ where
	\begin{center}
		\begin{tikzpicture}
		\node  (j) at (-1, 1) {$\widehat{Q}:$};
		\node  (a) at (0, 0) {$\bullet$};
		\node  (b) at (1, 0) {$\bullet$};
		\node  (c) at (2, 0) {$\bullet$};
		\node  (d) at (3, 0) {$\bullet$};
		\node  (e) at (4, 0) {$\bullet$};
		\node  (f) at (5, 0) {$\bullet$ .};
		\node  (g) at (2, 1) {$\bullet$};
		\node  (h) at (4, 1) {$\bullet$};
		\draw [->] (a) -- node[above] { $\alpha$}(b) ;
		\draw [->] (b) -- node[above] { $\beta$} (c) ;
		\draw [->] (c) -- node[above] { $h$}(d) ;
		\draw [->] (d) -- node[above] { $\beta$}  (e) ;
		\draw [->] (e) -- node[above] { $\gamma$}  (f);
		\draw [->] (g) -- node[right] { $\delta$}  (c) ;
		\draw [->] (h) -- node[right] { $\delta$}  (e) ;
		\end{tikzpicture}	
	\end{center}
Since $\tilde{A}$ is a tree, then $\mathrm{gl.dim}\,\tilde{A}<\infty$. Moreover, it is derived wild by Theorem \ref{rad}. Hence $A$ is derived wild by Lemma \ref{lemma1}.\\
\end{example}

\subsection{Mutations} We recall the notion of mutations of algebras from \cite{Perverce}. These are local operations on an algebra $A$ producing new algebras derived equivalent to $A$. 

Let $A= KQ/I$ be an algebra and let $x \in Q_0$ be a vertex without loops. Consider the following complexes: 
\[\xymatrix{
	R_x: & \cdots \ \ 0 \ar[r] &  P_x\ar[r]^-{f} & \bigoplus_{j \rightarrow x} P_j \ar[r]& 0 \ar[r] & 0 \ \ \cdots ,\\
	L_x: &\cdots \ \ 0 \ar[r] & 0 \ar[r] & \bigoplus_{x \rightarrow j} P_j\ar[r]^-{g} & P_x \ar[r]&0 \ \ \cdots ,\\
	R_i:&\cdots \ \ 0 \ar[r]  &0\ar[r] & P_i\ar[r] &0 \ar[r]& 0 \ \ \cdots,
}
\]
where the map $f$ is induced by all maps $P_x \rightarrow P_j$ corresponding to the arrows $j\rightarrow x$, the map $g$ is induced by the maps $P_j \rightarrow P_x$ corresponding to the arrows $x \rightarrow j$, the term $P_x$ lies in degree $-1$ in $R_x$ and in degree $1$ in $L_x$, and all other terms are in degree $0$.

\begin{definition}\label{defi02}
Let $A=KQ/I$ be an algebra and let $x \in Q_0$ be a vertex without loops.
\begin{itemize}
	\item[(a)] We say that the negative mutation of $A$ at $x$ is defined if \linebreak $T_x^-(A)=\bigoplus_{i \in Q_0} R_i \in \mathrm{K}^b(\mathrm{proj}\,A)$ is a tilting complex. In this case, we call the algebra $\mu_x^-(A)=(\mathrm{End}(T_x^-(A)))^{op}$ the \emph{negative mutation} of $A$ at the vertex $x$.
	
	\item[(b)] We say that the positive mutation of $A$ at $x$ is defined if\linebreak  $T_x^+(A)=\left(\bigoplus_{i \not= x} R_i\right)\oplus L_x  \in \mathrm{K}^b(\mathrm{proj}\,A)$ is a tilting complex. In this case, we call the algebra $\mu_x^+(A)=(\mathrm{End}(T_x^+(A)))^{op}$ the \emph{positive mutation} of $A$ at the vertex $x$.
\end{itemize}
\end{definition}

By Rickard's Theorem, the negative and positive mutations of an algebra $A$ at a vertex, when defined, are always derived equivalent to $A$.

\begin{remark} 
If $x$ is a sink (that is, $x^+=\emptyset)$, follows by \cite[Proposition 2.3]{Perverce} that $\mu_x^-(A)$ is defined. Moreover, in this case $T_x^-(A)$ is isomorphic in $\mathrm{D^b}(A)$ to the APR-tilting module corresponding to $x$ (see \cite{MR530043}). Similarly, if $x$ is a source (that is, $x^-=\emptyset$) then $\mu_x^+(A)$ is defined.    
\end{remark}
  
\subsection{Blowing-up} Let $A= KQ/I$ be an algebra and let  $D$ be a set of vertices without loops of $Q$. Following \cite{MR1868357}, the \emph{blowing-up} of $A$ at $D$ is the algebra $A[D]= KQ[D]/I[D]$, where $Q[D]$ and $I[D]$ are describe below. 

The quiver $Q[D]$ is obtained from $Q$ replacing each  vertex $d\in D$ by two vertices $d^-$ and $ d^+$,  each arrow $\alpha:x\rightarrow d$ with $d \in D$ by two arrows $\alpha^-:x \rightarrow d^-$ and $\alpha^+: x \rightarrow d^+$ and dually for each arrow $\beta: d \rightarrow x$.
 
There is an obvious quiver epimorphism $p:Q[D]\rightarrow Q$ witch extends uniquely to an epimorphism of algebras $p: KQ[D]\rightarrow KQ$. The ideal $I[D]$ of $KQ[D]$ is defined as the ideal generated by the inverse image of the generators of $I$ under $p$. 

\begin{example} Let $A= KQ/I$ be given by the bounded quiver: 
	
	\begin{center}
		\begin{tikzpicture}
		\node  (j) at (-1, 0) {$Q:$};
		\node  (1) at (0, 0) {$1$};
		\node  (2) at (1.5, 0) {$2$};
		\node  (3) at (3, 0) {$3$};
		\node  (4) at (4.5, 0) {$4$};
		\node  (5) at (6,0)  {$5$};
		\node  (i) at (8, 0) {and \, $I=\langle \alpha \beta \rangle$.};
		\draw [->] (1) -- node[above] { $\alpha$}(2) ;
		\draw [->] (2) -- node[above] { $\beta$} (3) ;
		\draw [->] (3) -- node[above] { $\gamma$}(4) ;
		\draw [->] (4) -- node[above] { $\delta$}(5) ;
		\end{tikzpicture}	
	\end{center}

If $D=\{1,3\}$, then  the blowing-up $A[D]$ is gives by the following quiver:
	
	\begin{center}
		\begin{tikzpicture}
		\node  (j) at (-1, 0) {$Q[D]:$};
		\node  (a) at (0, 0) {$1^+$};
		\node  (b) at (0, -2) {$1^-$};
		\node  (c) at (2, -1) {$2$};
		\node  (d) at (4, 0) {$3^+$};
		\node  (e) at (4, -2) {$3^-$};	
		\node  (f) at (6, -1) {$4$};
		\node  (g) at (8, -1 ) {$5$};
		\draw [->] (a) -- node[above] { $\alpha^{+}$}(c) ;
		\draw [->] (b) -- node[below] { $\alpha^{-}$} (c) ;
		\draw [->] (c) -- node[above] { $\beta^{+}$}(d) ;
		\draw [->] (c) -- node[below] { $\beta^{-}$}(e) ;
		\draw [->] (d) -- node[above] { $\gamma^{+}$}(f) ;
		\draw [->] (e) -- node[below] { $\gamma^{-}$} (f) ;
		\draw [->] (f) -- node[above] { $\delta$}(g) ;
		\end{tikzpicture}	
	\end{center}
bounded by $I[D]=\langle \alpha^+\beta^+, \alpha^-\beta^-, \alpha^+\beta^-, \alpha^-\beta^+, \beta^+\gamma^+ - \beta^-\gamma^-  \rangle $.
\end{example}

\begin{remark}\label{remark-1} Given $A=K Q/I$ and  $D \subset Q_0$ a set of vertices without loops, let $p:Q[D]\longrightarrow Q$ be the canonical quiver epimorphism. Since $p(I[D]) \subset I$, then  $p$ induces a $K$-linear functor $\pi: A[D] \rightarrow A$ where $\pi(i)= p_0(i)$ for each $i \in Q[D]_0$ and  $\pi_{i,j}(u)=\pi(u)$ for all $u \in A[D](i,j)$. Moreover, by construction of $A[D]$ we have that $\pi_{i,j}$ is a monomorphism for all $i,j \in Q[D]_0$ and 
\[\left\{\begin{array}{ll}
\mathrm{Im}\,\pi_{i, j}= A(p_0(i),p_0(j)), &  \mbox{ if } (i,
j) \notin \mathfrak{D};\\
\mathrm{Im}\, \pi_{i,j}=\mathrm{rad}\, A (p_0(i),p_0(j)), & \mbox{ if } (i,j) \in \mathfrak{D};
\end{array}\right.
\]	
where $\mathfrak{D}=\{(d^+,d^-), (d^-, d^+)| d \in D\}$.  
\end{remark}

\begin{lemma}\label{lemma12}
Under the considerations of Remark \ref{remark-1}: 
\begin{itemize}
	\item[(a)] The functor $\pi: A[D]\rightarrow A$ induces an exact functor $$\mathrm{K}(\pi): \mathrm{K}^b(\mathrm{proj}\,A[D]) \rightarrow \mathrm{K}^b(\mathrm{proj}\,A).$$
	
	\item[(b)] Let $B$ be a locally bonded $K$-category such that  there is a $K$-linear functor $\varphi:B \rightarrow A$  and a bijection $l: B_0 \rightarrow Q[D]_0$  such that:
	
	\begin{itemize}
		\item $\varphi(a)=p_0(l(a))$, for all $a \in B_0$;
		\item for all $a, b \in B_0$, the map $\varphi_{a,b}:B(a, b) \rightarrow A(\varphi(a), \varphi(b)) $ is a monomorphism and 
		\[\left\{\begin{array}{ll}
		\mathrm{Im}\,\varphi_{a, b}= A(p_0(l(a)),p_0(l(b))), &  \mbox{ if } (l(a),
		l(b)) \notin \mathfrak{D};\\
		\mathrm{Im}\, \varphi_{a,b}=\mathrm{rad}\, A (p_0(l(a)),p_0(l(b))), & \mbox{ if } (l(a),l(b)) \in \mathfrak{D}.
		\end{array}\right.
		\]
	\end{itemize}     
\end{itemize}
Then $B\cong A[D]$.
\end{lemma}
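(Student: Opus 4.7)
The proof splits naturally into the two parts of the statement.

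For part (a), the plan is to exploit that the $K$-linear functor $\pi:A[D]\to A$ endows $A$ with the structure of an $A[D]$-$A$-bimodule (with left action through $\pi$). The functor $-\otimes_{A[D]} A:\mathrm{mod}\,A[D]\to \mathrm{mod}\,A$ sends the indecomposable projective $e_i A[D]$ to $e_{\pi(i)}A=e_{p_0(i)}A$, which is indecomposable projective in $\mathrm{mod}\,A$. Hence this tensor functor restricts to an additive functor $\mathrm{proj}\,A[D]\to \mathrm{proj}\,A$ and extends termwise (and then cochainwise) to the required exact functor $\mathrm{K}(\pi):\mathrm{K}^b(\mathrm{proj}\,A[D])\to \mathrm{K}^b(\mathrm{proj}\,A)$.

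For part (b), the idea is to construct an isomorphism $F:B\to A[D]$ by \emph{lifting} $\varphi$ along $\pi$. On objects, set $F:=l$. For each pair $(a,b)\in B_0\times B_0$, the hypothesis on $\varphi_{a,b}$ together with Remark~\ref{remark-1} says that $\varphi_{a,b}$ and $\pi_{l(a),l(b)}$ are two injective $K$-linear maps into $A(p_0(l(a)),p_0(l(b)))$ with precisely the same image (either the full Hom space or its radical, according to whether $(l(a),l(b))\in \mathfrak{D}$). Hence there is a unique $K$-linear isomorphism
\[
F_{a,b}\colon B(a,b)\longrightarrow A[D](l(a),l(b))
\]
characterized by $\pi_{l(a),l(b)}\circ F_{a,b}=\varphi_{a,b}$.

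It then remains to verify that the assignment $F$ is a $K$-linear functor; this is immediate from the injectivity of $\pi$ on Hom spaces. Indeed, for composable $f\in B(a,b)$, $g\in B(b,c)$, both $F_{a,c}(gf)$ and $F_{b,c}(g)\circ F_{a,b}(f)$ are sent by $\pi_{l(a),l(c)}$ to $\varphi_{a,c}(gf)=\varphi_{b,c}(g)\varphi_{a,b}(f)$ (using functoriality of $\varphi$ and of $\pi$), so they coincide; preservation of identities is analogous. Since $F$ is bijective on objects and on every Hom space, it is a $K$-linear isomorphism of categories, which gives $B\cong A[D]$.

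The only subtle point is the matching of images in the lifting step: one needs the explicit description of $A[D]$ recorded in Remark~\ref{remark-1} so that the two cases $(l(a),l(b))\in\mathfrak{D}$ and $(l(a),l(b))\notin\mathfrak{D}$ align exactly with the hypothesis on $\varphi_{a,b}$. Once this bookkeeping is in place, no real obstacle remains; the statement is essentially a unique-lifting property characterizing $A[D]$ up to isomorphism.
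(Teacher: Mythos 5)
Your proposal is correct and takes essentially the same route as the paper: part (b) is word-for-word the paper's argument (define $F=l$ on objects and $F_{a,b}=\pi_{l(a),l(b)}^{-1}\varphi_{a,b}$ on morphisms, using that $\pi_{l(a),l(b)}$ and $\varphi_{a,b}$ are injective with the same image, and check functoriality via injectivity of $\pi$). For part (a) the paper defines the additive functor directly by $\widehat{P}_i\mapsto P_{p_0(i)}$ with $\pi$ on Hom spaces and then extends termwise to $\mathrm{K}^b$, which is the same functor as your $-\otimes_{A[D]}A$ (the direct description sidesteps the minor point that the left $A[D]$-action on $A$ through $\pi$ is not unital, since $e_{d^+}$ and $e_{d^-}$ both map to $e_d$).
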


\begin{proof} (a)  For all $i \in Q[D]_0$ we denote by $\widehat{P}_i= e_iA[D]$ the corresponding indecomposable projective $A[D]$-module. Thus $\pi$ induces, up to isomorphism, a coproduct preserving additive functor $\Pi:\mathrm{proj}\, A[D] \rightarrow \mathrm{proj}\, A$ such that $\Pi(\widehat{P}_i)= P_{p_0(i)}$ for each $i \in Q[D]_0$, and
	\[\begin{array}{rrl}
	\Pi_{\widehat{P}_i,\widehat{P}_j}:& \mathrm{Hom}_{A[D]}(\widehat{P}_i, \widehat{P}_j) & \longrightarrow \mathrm{Hom}_{A}(P_{p_0(i)}, P_{p_0(j)})\\
	& x  &\mapsto \pi(x)
	\end{array}
	\]
for all $i,j \in Q[D]_0$.
	
It follows from \cite[Proposition 1.1.1, p.192]{Milicic} that $\Pi$ induces an exact functor $\mathrm{K}(\Pi): \mathrm{K}^b(\mathrm{proj}\,A[D]) \longrightarrow \mathrm{K}^b(\mathrm{proj}\,A)$ defined as belongs.  

Given a complex $X \in \mathrm{K}^b(\mathrm{proj}\,A[D])$ given by 
$$X: \xymatrix{ \cdots  X^{n-1} \ar[rr]^{d_X^{n-1}} && X^{n} \ar[rr]^{d_X^{n}} && X^{n}  \cdots},$$
we get that
$$\mathrm{K}(\Pi)(X): \xymatrix{ \cdots  \Pi(X^{n-1}) \ar[rr]^{\Pi(d_X^{n-1})} && \Pi(X^{n}) \ar[rr]^{\Pi(d_X^{n+1})} && \Pi(X^{n})  \cdots \, .}$$

If $f \in \mathrm{K^b}(\mathrm{proj}\,A[D])(X, Y)$ is given by 
$$\xymatrix{X:\ar[d]^f&  \cdots  X^{n-1}\ar[d]^{f^{n-1}} \ar[rr]^{d_X^{n-1}} && X^{n}\ar[d]^{f^n} \ar[rr]^{d_X^{n}} && X^{n+1}\ar[d]^{f^{n+1}}  \cdots\\
	Y: &\cdots  Y^{n-1} \ar[rr]^{d_Y^{n-1}} && Y^{n} \ar[rr]^{d_Y^{n}} && Y^{n+1}  \cdots,}$$
then the morphism $\mathrm{K}(\Pi)(f) \in \mathrm{K^b}(\mathrm{proj}\,A)(\mathrm{K}(\Pi)(X), \mathrm{K}(\Pi)(Y))$ is
$$\xymatrix{\mathrm{K}(\Pi)(X):\ar[d]^{\mathrm{K}(\Pi)(f)}&  \cdots  \Pi(X^{n-1})\ar[d]^{\Pi(f^{n-1})} \ar[rr]^{\Pi(d_X^{n-1})} && \Pi(X^{n})\ar[d]^{\Pi(f^n)} \ar[rr]^{\Pi(d_X^{n})} && \Pi(X^{n+1})\ar[d]^{\Pi(f^{n+1})}  \cdots\\
	\mathrm{K}(\Pi)(Y): &\cdots  \Pi(Y^{n-1}) \ar[rr]^{\Pi(d_Y^{n-1})} && \Pi(Y^{n}) \ar[rr]^{\Pi(d_Y^{n})} && \Pi(Y^{n+1})  \cdots .}$$\\

(b) We define a functor $F:B \rightarrow A[D]$ as follows. For all $a \in B_0$, put $F(a)=l(a)$. Since $\pi_{l(a), l(b)}$ is a monomorphism and $\mathrm{Im}\,\varphi_{a,b}=\mathrm{Im}\,\pi_{l(a),l(b)}$ for every $a,b \in B_0$, then we can define: 
\[\begin{array}{rrl}
F_{a,b}:& B(a,b) & \longrightarrow A[D](l(a),l(b))\\
& x  &\mapsto \pi_{l(a),l(b)}^{-1}(\varphi_{a, b}(x)).
\end{array}
\]
Note that $F_{a,b}$ is $K$-linear since $\varphi_{a, b}$ and $\pi_{l(a),l(b)}$ are $K$-linear. Moreover, for all $a,b,c \in B_0$, $x \in B(a,b)$ and $y \in B(b,c)$ we have that:
\begin{eqnarray*}
	\pi_{l(a),l(c)} (F_{b,c}(y) F_{a,b}(x)) & = & \pi_{l(a),l(c)}\left( (\pi_{l(b),l(c)}^{-1}\varphi_{b,c}(y))(\pi_{l(a),l(b)}^{-1}\varphi_{a, b}(x))\right)\\
	& = &  \left(\pi_{l(b),l(c)} \pi_{l(b),l(c)}^{-1}\varphi_{b,c}(y)\right) \left(\pi_{l(a),l(b)} \pi_{l(a),l(b)}^{-1}\varphi_{a,b}(x)\right)\\
	& = &  \varphi_{b,c}(y)\varphi_{a,b}(x)\\
	& = & \varphi_{a,c}(yx)\\
	& = & \pi_{l(a),l(c)}\pi_{l(a),l(c)}^{-1}\varphi_{a,c}(yx)\\
	& = & \pi_{l(a),l(c)}(F_{a,c}(yx)).
\end{eqnarray*}  
Since $\pi_{l(a),l(c)}$ is a monomorphism then $F_{a,c}(yx)= F_{b,c}(y) F_{a,b}(x)$. Therefore $F$ is a  $K$-linear functor.

Finally, it is clear that $F$ is an isomorphism since $F$ is bijective on the objects and morphisms.

\end{proof} 

\begin{lemma}\label{lemma3}
Let $A=KQ/I$ be an algebra and let $D\subset Q_0$ be a set of vertices without loops.
\begin{itemize}
	\item[(a)]If $x \in Q_0$ is a sink, $x \notin D$ and  for any $d \in D$ there is no arrow  $d \rightarrow x $, then $\mu_x^-(A[D])\cong \mu_x^-(A)[D]$.  In particular, $A[D]$ is derived equivalent to $ \mu_x^-(A)[D]$.
	\item[(b)]If $x \in Q_0$ is a source,  $x \notin D$ and for any $d \in D$ there is no arrow $ x \rightarrow d$, then  $\mu_x^+(A[D])\cong \mu_x^+(A)[D]$ . In particular, $A[D]$ is derived equivalent to $ \mu_x^+(A)[D]$.
\end{itemize} 
\end{lemma}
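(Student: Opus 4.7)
Since part (b) is entirely dual to (a) (replacing sinks by sources and the complex $R_x$ by $L_x$), I focus on (a); once the isomorphism $\mu_x^-(A[D]) \cong \mu_x^-(A)[D]$ is established, the derived-equivalence claim follows directly from Rickard's theorem applied to the tilting complex $T_x^-(A[D])$.

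The plan is to prove the isomorphism via Lemma \ref{lemma12}(b). First, both algebras make sense: the hypotheses force $x$ to remain a loop-free sink in $Q[D]$, so $\mu_x^-(A[D])$ is defined, and mutation at $x$ introduces no loops at any $d \in D$, so $\mu_x^-(A)[D]$ is also defined. Both categories have object set in natural bijection with $Q[D]_0$, so I take the bijection $l$ to be the identity. To produce the required functor $\varphi : \mu_x^-(A[D]) \to \mu_x^-(A)$, I apply the exact functor $\mathrm{K}(\pi)$ of Lemma \ref{lemma12}(a) to the summands of $T_x^-(A[D])$. The crucial combinatorial input is the hypothesis ``no arrow $d \to x$ with $d \in D$'', which forces $p_0$ to restrict to a bijection between the arrows ending at $x$ in $Q[D]$ and those ending at $x$ in $Q$. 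Consequently $\mathrm{K}(\pi)(R_i^{A[D]}) \cong R_{p_0(i)}^A$ for every $i \in Q[D]_0$, and passing to endomorphism algebras yields $\varphi$, acting on objects by $p_0$.

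Next, I verify the image and monomorphism conditions of Lemma \ref{lemma12}(b) by case analysis on whether $x$ appears in the pair of objects. When neither object equals $x$, the Hom spaces in $\mathrm{K}^b(\mathrm{proj}\,A[D])$ and $\mathrm{K}^b(\mathrm{proj}\,A)$ reduce to $A[D](a, b)$ and $A(p_0(a), p_0(b))$, and $\varphi_{a, b}$ coincides with $\pi_{a, b}$, so Remark \ref{remark-1} handles both properties directly. When $x$ appears, the sink and no-loop properties give $\mathrm{Hom}(P_c, P_x) = 0$ for $c \neq x$, which kills the relevant null-homotopies and reduces $\mathrm{Hom}_{\mathrm{K}^b}(R_x, -)$ and $\mathrm{Hom}_{\mathrm{K}^b}(-, R_x)$ to direct sums indexed by arrows into $x$. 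The arrow bijection induced by $p_0$ identifies these sums in $A[D]$ with those in $A$, and a summand-wise application of Remark \ref{remark-1} delivers the image description. Since $x \notin D$, no pair of objects involving $x$ lies in $\mathfrak{D}$, so the image should always be the full Hom space, which is exactly what the computation yields.

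The main technical obstacle is the case $a = b = x$, where $\mathrm{End}_{\mathrm{K}^b}(R_x^{A[D]})$ is described by pairs $(f^{-1}, f^0)$ satisfying the chain-map relation $f^0 d = d f^{-1}$, with an analogous description for $A$. Here my plan is to lift a given endomorphism of $R_x^A$ entry by entry through $\pi$ (using that each relevant pair of objects avoids $\mathfrak{D}$, so the corresponding $\pi$-component is an isomorphism of Hom spaces) and then to verify that the lifted pair still satisfies the chain-map relation, using the injectivity of $\pi$ on the spaces $A[D](x, j)$ in which the equation is tested. This furnishes a bijection $\mathrm{End}_{\mathrm{K}^b}(R_x^{A[D]}) \cong \mathrm{End}_{\mathrm{K}^b}(R_x^A)$ and completes the verification of Lemma \ref{lemma12}(b), yielding the desired isomorphism $\mu_x^-(A[D]) \cong \mu_x^-(A)[D]$.
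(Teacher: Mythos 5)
Your proposal is correct in substance and shares the paper's skeleton: both build the comparison functor from $\mathrm{K}(\pi)$ of Lemma \ref{lemma12}(a), use the hypothesis ``no arrow $d\to x$'' to get $\mathrm{K}(\Pi)(\widehat{R}_i)=R_{p_0(i)}$, and conclude via Lemma \ref{lemma12}(b). Where you genuinely diverge is in how the monomorphism/image conditions are verified. The paper first reduces to $|D|=1$ by induction on $|D|$, and then avoids all explicit homotopy-category computations by a subcategory trick: for $\varepsilon\in\{+,-\}$ the full subcategory $A^{\varepsilon}$ of $A[d]$ on $Q[d]_0\setminus\{d^{-\varepsilon}\}$ is isomorphic to $A$ via $\pi$, so the induced map on the corresponding full subcategory of $\mu_x^-(A[d])$ is an isomorphism; this settles every pair of objects except $(d^{+},d^{-})$ and $(d^{-},d^{+})$, where both complexes are stalk complexes and Remark \ref{remark-1} gives the radical image. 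You instead handle all of $D$ at once and compute the relevant Hom spaces directly, using that $x$ stays a loop-free sink in $Q[D]$, so $\mathrm{Hom}(P_c,P_x)=0$ kills homotopies, and the arrow bijection at $x$ plus componentwise use of Remark \ref{remark-1} (all pairs $(j,j')$ occurring avoid $\mathfrak{D}$ because the sources of arrows into $x$ are not in $D$) identifies the two sides; injectivity of $\pi$ transfers the chain-map equations, exactly as you describe for $\mathrm{End}(R_x)$. Your route buys a uniform, induction-free argument; the paper's buys brevity, since the subcategory isomorphism disposes of the Hom computations wholesale. Two small points to tighten in a full write-up: $\mathrm{Hom}_{\mathrm{K}^b}(R_x,R_i)$ is not literally a direct sum indexed by arrows into $x$ but the subspace of maps $g\colon\bigoplus_{j\to x}P_j\to P_i$ with $gf=0$, so the surjectivity step there needs the same lift-then-use-injectivity argument you reserve for the case $a=b=x$; and your assertion that mutation at $x$ creates no loop at any $d\in D$ (needed for $\mu_x^-(A)[D]$ to be defined) deserves a one-line justification -- since $x$ is a sink, $\mathrm{Hom}_A(P_d,P_x)=0$, so no radical factorizations through $x$ are lost -- though the paper itself leaves this point implicit as well.
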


\begin{proof} We only prove (a), the proof of (b) is similar. For to prove the item (a) we use induction on the number of elements of $D$.

Suppose first that $D=\{d\}$. Let $T_x^-(A)= \bigoplus_{i \in Q_0} R_i\in \mathrm{K}^b(\mathrm{proj}\, A)$ and $\mu_x^-(A)= \left(\mathrm{End}(T_x^-(A))\right)^{op}$ be as in Definition \ref{defi02}. 
Since $R_i$ is indecomposable for any $i \in Q_0$, then $\mu_{x}^-(A)$ is the $K$-category whose set of objects  is $Q_0$ and for all $i,j \in Q_0$ we have $\mu_x^-(A)(i,j)= \mathrm{Hom}_{\mathrm{K}^b(\mathrm{proj}\,A)}(R_j, R_i)$. Moreover, suppose that  $\mu_{x}^-(A) \cong K \Delta/ J$ and let $p:(Q[d], I[d]) \longrightarrow (Q, I)$ and $q:(\Delta[d], J[d]) \longrightarrow (\Delta, J)$ be the canonical quiver-morphisms. Hence $Q[d]_0= \Delta[d]_0$ and the maps $p_0=Q[d]_0 \rightarrow Q_0$ and $q_0=\Delta[d]_0 \rightarrow \Delta_0$ are equal.
	
In the other hand, given $\widehat{P}_i$ the projective $A[d]$-module associated to  $i \in Q[d]_0$, we define the complexes of $A[d]$-modules:
	$$\xymatrix{
		\widehat{R}_i: & \cdots \ \ 0 \ar[r] & 0 \ar[r]& \widehat{P}_i \ar[r]& 0 \ \ \cdots, & i \not=x,\\ 
		\widehat{R}_x: & \cdots \ \ 0 \ar[r] &  \widehat{P}_x\ar[r]^-{\widehat{f}} & \displaystyle\bigoplus_{j \rightarrow x} \widehat{P}_j \ar[r]& 0 \ \ \cdots ,\\
	}
	$$ 
concentrated in degree $-1$ and $0$, where the map $\widehat{f}$ is induced by all the arrows \linebreak$\alpha \in Q[d]_1$ such that $t(\alpha)=x$. By definition $T_x^-(A[d])= \bigoplus_{i \in Q[d]_0} \widehat{R}_i$ and $\mu_x^-(A[d])=\left(\mathrm{End}(T_x^-(A[d]))\right)^{op}$, thus $\mu_x^-(A[d])$ is the $K$-category whose set of objects is  $Q[d]_0$ and for every $i,j \in Q[D]_0$ we have $\mu_x^-(A[d])(i,j)= \mathrm{Hom}_{\mathrm{K}^b(\mathrm{proj}\,{A[d]})}(\widehat{R}_j, \widehat{R}_i)$. Hence there is a bijection, which we suppose to be the identity, between the objects  of  $\mu_{x}^-(A[d])$ and the objects of $\mu_{x}^-(A)[d]$.  
	
Given  the morphism  $p:(Q[d],I[d]) \rightarrow (Q,I)$, let  $\pi: A[d] \rightarrow A$ be the homomorphism of algebras induced by $p$. From the Lemma \ref{lemma12}(a), $\pi$ induces an additive functor $\mathrm{K}(\Pi):\mathrm{K}^b(\mathrm{proj}\,A[d]) \longrightarrow \mathrm{K}^b(\mathrm{proj}\,A)$. Since there is no arrow $d \rightarrow x$, then $P_d$ is not a  summand of $R_x^0$ and $\widehat{P}_{d^+},\widehat{P}_{d^-} $ are not summands of $\widehat{R}^0_x$, thus $\mathrm{K}(\Pi)(\widehat{R}_i)=R_{p_0(i)}$ for all $i \in Q[d]_0$. Hence we can define a functor $\tilde{\pi}: \mu_{x}^-(A[d]) \rightarrow \mu_{x}^-(A)$ where  $\tilde{\pi}(i)= p_0(i)$ for every $i \in Q[d]_0$, and for $g \in \mu_{x}^-(A[d])(i,j)$ we have $\tilde{\pi}_{i,j}(g):= \mathrm{K}(\Pi)(g) \in  \mu_{x}^-(A)(p_0(i),p_0(j))$.   
	
For $\varepsilon \in \{+, -\}$, denote by $A^{\varepsilon}$ the full subcategory of $A[d]$ defined by $Q[d]_0\backslash \{d^{-\varepsilon}\}$ and denote by $B^{\varepsilon}$ the full subcategory of $\mu_{x}^-(A[d])$ defined by $Q[d]_0 \backslash \{d^{-\varepsilon}\}$. Of course $\pi|_{A^{\varepsilon}}: A^{\varepsilon} \rightarrow A$ is an isomorphism, thus $\tilde{\pi}|_{B^{\varepsilon}}:B^{\varepsilon} \rightarrow \mu_{x}^-(A)$ is an isomorphism too. 
	
Given $i,j \in  Q[D]_0$ such that $\{i,j\} \not= \{d^+,d^-\}$, then $i,j \in Q[D]_0\backslash\{d^{-\varepsilon}\}$ for any $\varepsilon \in \{+, -\}$, hence $\tilde{\pi}_{i,j}= \tilde{\pi}|_{B^{\varepsilon}}$ is an isomorphism. Moreover, since \linebreak$\widehat{R}_{d^\varepsilon}= \xymatrix{0 \ar[r] & \widehat{P}_{d^\varepsilon}\ar[r]& 0}$, so  $\tilde{\pi}_{d^{\varepsilon}, d^{-\varepsilon}}= \pi_{d^{\varepsilon}, d^{-\varepsilon}}$  and therefore it is a monomosphism and $\mathrm{Im}\, \tilde{\pi}_{d^{\varepsilon}, d^{-\varepsilon}} = \mathrm{Im}\, \pi_{d^{\varepsilon}, d^{-\varepsilon}} = \mathrm{rad}\, A(d,d) = \mathrm{rad}\, \mu_{x}^-(A)(d,d)$. Thus \linebreak$\mu_{x}^-(A[d]) \cong \mu_{x}^-(A)[d]$ by Lemma \ref{lemma12}(b).
	
Now, suppose that $|D|=n>1$.  Fixed $d_0 \in D$, put $D_0= D\backslash \{d_0\}$. In this way, $\mu_x^-(A[D])= \mu_{x}^-((A[d_0])[D_0]) \cong \mu_{x}^-((A[d_0]))[D_0]$ by induction hypotheses. Moreover, since   $\mu_{x}^-(A[d_0]) \cong \mu_{x}^-(A)[d_0]$, then $\mu_x^-(A[D]) \cong \mu_{x}^-((A[d_0]))[D_0] \cong (\mu_{x}^-(A)[d_0])[D_0] = \mu_{x}^-(A)[D]$.
\end{proof}

\section{Good Quadratic String Algebras}

\begin{definition}
Let $A=KQ/I$ be a quadratic string algebra. A vertex $x \in Q_0$ is called \emph{gentle} if the following conditions holds:
	\begin{itemize}
		\item[(i)]for each arrow $\alpha$ in $Q$ such that $s(\alpha)=x$, there is at most one arrow $\beta$ such that $t(\beta)=x$ and $\alpha \beta \in I$;
		\item[(ii)]for each arrow $\alpha$ in $Q$ such that $t(\alpha)=x$, there is at most one arrow $\gamma$ such that $s(\gamma)=x$ and $\gamma\alpha \in I$.
	\end{itemize}
\end{definition}

Following \cite{MR892057}, $A$ is said to be \emph{gentle} if every vertex of $Q$ is gentle. 

In the next definition we will use the following notation. Given a quiver $Q$, then in its graphic presentation every sink will be denoted by $\circ$ and every source will be denoted by $\diamond$. The other vertices will be denoted by $\bullet$.    

\begin{definition}\label{defi01}
Let $(Q, I)$ be a quadratic string pair. For every $i=1, \cdots, 6$ we define the sets $E_i, O_i \subset Q_0$ as follows. \\

(1) We say that a vertex $x$ belongs to  $E_1$ if there are arrows $\alpha=\alpha_x, \beta=\beta_x, \gamma=\gamma_x, \delta=\delta_x \in Q_1$ such that:
\begin{itemize}
	\item $t(\alpha)=t(\beta)=s(\gamma)=s(\delta)=x$;
	\item $s(\alpha), s(\beta), x, t(\gamma)$ and $t(\delta)$ are pairwise distinct;
	\item $s(\alpha)^+ = \{\alpha\}$ and $s(\alpha)^-=\emptyset$;
	\item $t(\gamma)^+ = \emptyset$ and $t(\gamma)^-=\{\gamma\}$;
	\item $\alpha\gamma, \beta\gamma, \alpha\delta \in I$ but $\beta\delta \notin I$.
\end{itemize}
Moreover, set $O_1=\{s(\alpha_x), t(\gamma_x) | x \in E_1 \}$. Therefore, if $x \in E_1$ then its neighborhood  has the following shape:
\begin{center}
	\begin{tikzpicture}
	\node  (a) at (0, 1) {$\diamond$};
	\node  (b) at (1, 0) {$x$};
	\node  (c) at (0, -1) {$\bullet$};
	\node  (d) at (2, 1) {$\circ$};
	\node  (e) at (2, -1) {$\bullet$};
	\draw [->] (a) -- node[above] { $\alpha$} (b);
	\draw [->] (c) -- node[left] { $\beta$} (b);
	\draw [->] (b) -- node[above] { $\gamma$} (d);
	\draw [->] (b) -- node[right] { $\delta$} (e);
	\end{tikzpicture}
\end{center}

(2) We say that a vertex $x$ belongs to  $E_2$ if there are arrows $\alpha=\alpha_x, \beta=\beta_x, \gamma=\gamma_x, \delta=\delta_x \in Q_1$ such that:
\begin{itemize}
	\item $t(\alpha)=t(\beta)=s(\gamma)=s(\delta)=x$;
	\item $s(\beta)=t(\gamma)$;
	\item $s(\alpha), s(\beta), x, t(\gamma)$ are pairwise distinct;
	\item $s(\alpha)^+ = \{\alpha\}$ and $s(\alpha)^-=\emptyset$;
	\item $t(\gamma)^+ = \emptyset$ and $t(\gamma)^-=\{\gamma\}$;
	\item $\alpha\gamma, \beta\gamma, \alpha\delta, \delta\beta \in I$ but $\beta\delta \notin I$.
\end{itemize}
Moreover, set $O_2=\{s(\alpha_x), t(\gamma_x) | x \in E_2 \}$.  Therefore, if $x \in E_2$ then its neighborhood has the following shape:
\begin{center}
	\begin{tikzpicture}
	\node  (a) at (0, 1) {$\diamond$};
	\node  (b) at (1, 0) {$x$};
	\node  (c) at (1, -1) {$\bullet$};
	\node  (d) at (2, 1) {$\circ$};
	\draw [->] (a) -- node[above] { $\alpha$} (b);
	\draw [->] (c) to [bend left] node[left] { $\beta$} (b);
	\draw [->] (b) to [bend left] node[right] { $\delta$} (c);
	\draw [->] (b) -- node[above] { $\gamma$} (d);
	\end{tikzpicture}
\end{center}

(3) We say that a vertex $x$ belongs to  $E_3$ if there are arrows $\alpha=\alpha_x, \beta=\beta_x, \gamma=\gamma_x \in Q_1$ such that:
\begin{itemize}
	\item $t(\alpha)=t(\beta)=s(\gamma)=x$;
	\item $s(\alpha), s(\beta), x, t(\gamma)$ are pairwise distinct;
	\item $s(\alpha)^+ = \{\alpha\}$ and $s(\alpha)^-=\emptyset$;
	\item $s(\beta)^+ = \{\beta\}$ and $s(\beta)^-=\emptyset$;
	\item $x^+=\{\gamma\}$;
	\item $\alpha\gamma, \beta\gamma \in I$.
\end{itemize}
Moreover, set $O_3=\{s(\alpha_x), s(\beta_x) | x \in E_3 \}$.  Therefore, if $x \in E_3$ then its neighborhood  has the following shape:
\begin{center}
	\begin{tikzpicture}
	\node  (a) at (0, 1) {$\diamond$};
	\node  (b) at (0, -1) {$\diamond$};
	\node  (c) at (1, 0) {$x$};
	\node  (d) at (2, 0) {$\bullet$};
	\draw [->] (a) -- node[above] { $\alpha$} (c);
	\draw [->] (b) -- node[above] { $\beta$} (c);
	\draw [->] (c) -- node[above] { $\gamma$} (d);
	\end{tikzpicture}
\end{center}

(4) We say that a vertex $x$ belongs to  $E_4$ if there are arrows $\alpha=\alpha_x, \beta=\beta_x, \gamma=\gamma_x \in Q_1$ such that:
\begin{itemize}
	\item $t(\alpha)=t(\beta)=s(\gamma)=x$;
	\item $s(\alpha), s(\beta), x, t(\gamma)$ are pairwise distinct;
	\item $s(\alpha)^+ = \{\alpha\}$ and $s(\alpha)^-=\emptyset$;
	\item $t(\gamma)^+ = \emptyset$ and $t(\gamma)^-=\{\gamma\}$;
	\item $x^+=\{\gamma\}$;
	\item $\alpha\gamma, \beta\gamma \in I$.
\end{itemize}
Moreover, set $O_4=\{s(\alpha_x), t(\gamma_x) | x \in E_4 \}$. Therefore, if $x \in E_4$ then  its neighborhood  has the following shape:
\begin{center}
	\begin{tikzpicture}
	\node  (a) at (0, 1) {$\diamond$};
	\node  (b) at (0, -1) {$\bullet$};
	\node  (c) at (1, 0) {$x$};
	\node  (d) at (2, 0) {$\circ$};
	\draw [->] (a) -- node[above] { $\alpha$} (c);
	\draw [->] (b) -- node[above] { $\beta$} (c);
	\draw [->] (c) -- node[above] { $\gamma$} (d);
	\end{tikzpicture}
\end{center}

(5) We say that a vertex $x$ belongs to  $E_5$ if there are arrows $\alpha=\alpha_x, \gamma=\gamma_x, \delta=\delta_x \in Q_1$ such that:
\begin{itemize}
	\item $t(\alpha)=s(\gamma)=s(\delta)=x$;
	\item $s(\alpha), x, t(\gamma), t(\delta)$ are pairwise distinct;
	\item $t(\delta)^+ = \emptyset$ and $t(\delta)^-=\{\delta\}$;
	\item $t(\gamma)^+ = \emptyset$ and $t(\gamma)^-=\{\gamma\}$;
	\item $x^-=\{\alpha\}$;
	\item $\alpha\gamma, \alpha\delta \in I$.
\end{itemize}
Moreover, set $O_5=\{t(\delta_x), t(\gamma_x) | x \in E_5 \}$. Therefore, if $x \in E_5$ then its the neighborhood  has the following shape:
\begin{center}
	\begin{tikzpicture}
	\node  (a) at (0,0) {$\bullet$};
	\node  (b) at (2, -1) {$\circ$};
	\node  (c) at (1, 0) {$x$};
	\node  (d) at (2, 1) {$\circ$};
	\draw [->] (a) -- node[above] { $\alpha$} (c);
	\draw [->] (c) -- node[above] { $\delta$} (b);
	\draw [->] (c) -- node[above] { $\gamma$} (d);
	\end{tikzpicture}
\end{center}

(6) We say that a vertex $x$ belongs to  $E_6$ if there are arrows $\alpha=\alpha_x, \gamma=\gamma_x, \delta=\delta_x \in Q_1$ such that:
\begin{itemize}
	\item $t(\alpha)=s(\gamma)=s(\delta)=x$;
	\item $s(\alpha), x, t(\gamma), t(\delta)$ are pairwise distinct;
	\item $s(\alpha)^+=\{\alpha\}$ and $s(\alpha)^-=\emptyset$;
	\item $t(\gamma)^+ = \emptyset$ and $t(\gamma)^-=\{\gamma\}$;
	\item $x^-=\{\alpha\}$;
	\item $\alpha\gamma, \alpha\delta \in I$.
\end{itemize}
Moreover, set $O_6=\{s(\alpha_x), t(\gamma_x) | x \in E_6 \}$. Therefore, if $x \in E_6$ then its neighborhood has the following shape:
\begin{center}
	\begin{tikzpicture}
	\node  (a) at (0,0) {$\diamond$};
	\node  (b) at (2, -1) {$\bullet$};
	\node  (c) at (1, 0) {$x$};
	\node  (d) at (2, 1) {$\circ$};
	\draw [->] (a) -- node[above] { $\alpha$} (c);
	\draw [->] (c) -- node[above] { $\delta$} (b);
	\draw [->] (c) -- node[above] { $\gamma$} (d);
	\end{tikzpicture}
\end{center}

Furthermore, the elements of  $\cup_{i=1}^{6}E_i$ and $\cup_{i=1}^{6}O_i$ are called \emph{exceptional vertices} and \emph{ordinary vertices}, respectively. 
\end{definition}

\begin{definition}\label{defi-gqs} A quadratic string algebra $A=KQ/I$ is said to be \emph{good} (shortly, a gqs algebra) if every vertex of $Q_0$ is a gentle  or exceptional vertex. 
\end{definition}

\begin{remark}Note that any exceptional vertex is not gentle. In the other hand, since any ordinary vertex is a sink or a source, then it is a gentle vertex.
\end{remark}

\begin{example}(a) Every gentle algebra is a gqs algebra.\\
	
(b) Let $A=KQ/I$ given by
	\begin{center}
		\begin{tikzpicture}
		\node  (q) at (-1, 1) {$Q$:};
		\node  (1) at (0, 0) {$1$};
		\node  (2) at (1, -1) {$2$};
		\node  (3) at (1, 0) {$3$};
		\node  (4) at (2, 0) {$4$};
		\node  (5) at (2, 1) {$5$};
		\node  (6) at (2, -1) {$6$};
		\node  (7) at (3, 0) {$7$};
		\node  (8) at (3, 1) {$8$};
		\node  (9) at (4, 0) {$9$};
		\node  (10) at (5, 0) {$10$};
		\node  (11)	at (5, 1) {$11$};
		\node  (12)	at (6, 0) {$12$};
		\draw [->] (1) -- node[above] { $\alpha$}(3);
		\draw [->] (2) -- node[left] { $\beta$}(3);
		\draw [->] (3) -- node[above] { $\gamma$}(4);
		\draw [->] (5) -- node[right] { $\delta$}(4);
		\draw [->] (4) -- node[right] { $\lambda$}(6);
		\draw [->] (4) -- node[above] { $\rho$}(7);
		\draw [->] (8) -- node[right] { $\mu$}(7);
		\draw [->] (7) -- node[above] { $\kappa$}(9);
		\draw [->] (9) to [out=45,in=135] node[above] { $\eta$} (10);
		\draw [->] (10) to [out=225,in=315] node[below] { $\varepsilon$} (9);
		\draw [->] (11) -- node[right] { $\sigma$}(10);
		\draw [->] (10) -- node[above] { $\tau$}(12);	
		\end{tikzpicture}
	\end{center} 
and $I=\langle\alpha\gamma, \beta\gamma, \gamma\lambda, \delta\lambda, \delta\rho, \rho\kappa,\varepsilon\eta,\eta\tau,\sigma\tau, \sigma\varepsilon \rangle$. Then $A$ is a gqs algebra where $E_1=\{4\}$, $O_1=\{5,6\}$, $E_2=\{10\}$, $O_2=\{11, 12\}$, $E_4=\{3\}$ and $O_4=\{1, 2\}$. 
\end{example}

In the order to show that gqs algebras are derived tame, let us recall the definition of skewed-gentle algebras.

\begin{definition}
Let $A=KQ/I$ be a quadratic string algebra. A vertex $x \in Q_0$  is said to be \emph{special} if there is at most one arrow $\alpha$ starting at $x$, at most one arrow $\beta$ ending at $x$ and if both exist then $\alpha\beta \notin I$.  
\end{definition}

Following \cite{MR3157316} (see also \cite{MR1738420,MR1986209}), a basic algebra $B$ is said to be \emph{skewed-gentle} if $B\cong A[D]$ where $A=KQ/I$ is a gentle algebra and $D \subset Q_0$ is  set of special vertices. 

It is well-known that gentle (\cite{MR1102821,MR2000963}) and skewed-gentle algebras (\cite{MR1738420,MR1986209}) are derived tame. 

\begin{lemma}\label{lemma4}
Let $A=KQ/I$ be a gqs algebra with $n> 0$ exceptional vertices and let $D\subset Q_0$ be a set of special but not ordinary vertices.  Then $A[D]$ is derived equivalent to $B[S]$  where $B=K\Delta/J$ is a gqs algebra with $n-1$ exceptional vertices and  $S\subset \Delta_0$ is a set of special but not ordinary vertices. 
\end{lemma}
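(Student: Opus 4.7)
The plan is to peel off one exceptional vertex $x$ of $A$ at a time. More precisely, after performing a short sequence of APR-tilts at the pendant ordinary neighbours of $x$, I will express the resulting algebra as a blow-up $B[\{y\}]$ of a gqs algebra $B$ with one fewer exceptional vertex, and then commute the blow-up at $D$ with these tilts by means of Lemma \ref{lemma3}.

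Fix $x \in E_i$ for some $i \in \{1,\dots,6\}$. By Definition \ref{defi01}, the neighbourhood of $x$ contains one or two pendants in $O_i$: a source with a unique outgoing arrow, a sink with a unique incoming arrow, or one of each. For each such pendant I would perform the APR-tilt of \cite{MR530043} (positive mutation at sources, negative at sinks); these are always defined. Writing $A'$ for the resulting algebra (possibly $A' = A$ when no APR-tilt is needed), the key combinatorial step is the case-by-case claim that the local pattern of $A'$ at $x$ coincides exactly with the blow-up at a single pendant special vertex $y$ of some smaller pattern. For the symmetric cases $E_3$ (two pendant sources) and $E_5$ (two pendant sinks) no APR-tilt is needed: e.g. for $x \in E_3$ the two pendant sources $s(\alpha_x),\, s(\beta_x)$ of $A$ are literally the two copies $y^+, y^-$ produced by blowing up a single special source $y$ of an ambient $B$ having a unique outgoing arrow $\tilde\alpha$ to $x$ and relation $\tilde\alpha\gamma_x \in J$; in this construction $x$ becomes gentle in $B$, $y$ is special but not ordinary (its sole neighbour is now gentle), and no other vertex changes type, so $B$ is gqs with $n-1$ exceptional vertices. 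The asymmetric cases $E_4$ and $E_6$ mix a pendant source with a pendant sink; a single APR-tilt at the pendant source (resp.\ sink) converts the mixed pair into a symmetric pair of the $E_5$ (resp.\ $E_3$) type, to which the recognition above applies. Cases $E_1$ and $E_2$ require both APR-tilts simultaneously, after which the pattern at $x$ becomes the blow-up of a gentle-like pattern (or, in some sub-cases, $x$ already becomes gentle outright, in which case we may take $\{y\} = \emptyset$).

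Since each pendant used belongs to $O_i$ and therefore not to $D$, and since its unique neighbour $x$ is exceptional (hence not special and a fortiori not in $D$), Lemma \ref{lemma3} applies inductively to each APR-tilt and yields $A[D] \sim_{\mathrm{der}} A'[D]$. Combining with $A' \cong B[\{y\}]$ and composing blow-ups at disjoint vertex sets gives
\[
A[D] \;\sim_{\mathrm{der}}\; A'[D] \;\cong\; B[\{y\}][D] \;=\; B[\{y\}\cup D] \;=\; B[S],
\]
with $S = \{y\}\cup D$ a set of special-but-not-ordinary vertices of $B$: the elements of $D$ retain their $A$-neighbourhoods under the passage $A \rightsquigarrow B$ (so they remain special and not ordinary), while $y$ is special and not ordinary by construction.

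The main obstacle is the case-by-case verification of the blow-up recognition step for each $E_i$. In particular, for $E_1$ and $E_2$ one must check that the APR-tilts at opposite pendants do not interfere with each other and produce the correct monomial algebra with the correct relations, that $x$ becomes gentle in $B$, and that no other vertex of $A$ changes its gqs-type under the mutations.
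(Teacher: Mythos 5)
Your overall skeleton is the same as the paper's: perform mutations at the ordinary pendant vertices of the exceptional vertex $x$, commute them with the blow-up at $D$ via Lemma \ref{lemma3} (your verification that the pendants are not in $D$ and that $x\notin D$ is the right check), and then recognize the mutated algebra as $B[S]$ with $S=\{y\}\cup D$; moreover your treatment of $E_3$ (and dually $E_5$), where no tilt is needed and $A$ itself is the blow-up at a pendant, coincides with the paper's Case 3. The problem is that the recognition step — which you yourself flag as ``the main obstacle'' and do not verify — is precisely where the work lies, and where you do commit to specifics the claims are false. For $E_4$, with $\alpha:1\to 3$, $\beta:2\to 3$, $\gamma:3\to 4$, $\alpha\gamma,\beta\gamma\in I$, a single APR-tilt at the pendant source $1$ produces the linear quiver $2\to 3\to 1\to 4$ in which neither length-two path is a relation but the length-three path is one (the relation $\alpha\gamma$ becomes cubic after the reflection), so the result is not a quadratic string algebra and has no $E_5$ vertex; a single tilt at the pendant sink $4$ instead kills both quadratic relations and yields a vertex with two incoming arrows, one outgoing arrow and no relations through it, which is neither an $E_3$ configuration nor even quadratic string. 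So neither single tilt converts the asymmetric cases into the symmetric ones, and the same holds dually for $E_6$. In the paper both tilts ($\mu^-$ at the ordinary sink, then $\mu^+$ at the ordinary source) are performed for $E_1$, $E_2$, $E_4$ (dually $E_6$), and the resulting algebra is identified directly, via Lemma \ref{lemma12}(b), as $B[3]$ — a blow-up at the image of the exceptional vertex $x$ itself, which becomes a special (non-pendant, flow-through) vertex of $B$ — not as a blow-up at a pendant $y$.

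For $E_1$ and $E_2$ your description is also off in substance: after the two tilts the algebra always contains the commutativity relation $\tilde{\alpha}\tilde{\lambda}-\tilde{\gamma}\tilde{\delta}$, so it is never a gqs algebra and the sub-case ``$x$ becomes gentle outright, take $\{y\}=\emptyset$'' does not occur; the blow-up structure with its non-monomial relation is exactly what must be exhibited. Two smaller points: the hypothesis of Lemma \ref{lemma3} for the second tilt has to be checked in the already-mutated algebra (the pendant's unique neighbour changes under the first tilt, e.g.\ from $x$ to the other pendant), and the vertices of $D$ do not literally ``retain their $A$-neighbourhoods'' (an arrow such as $\beta$ changes target), so preservation of ``special but not ordinary'' needs the explicit comparison of relations that the paper's construction of $J$ provides.
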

\begin{proof} Fix $\mathcal{I}$ a set of paths with length two such that $I=\langle\mathcal{I}\rangle$. If  $x \in Q_0$ is an exceptional vertex, then  $x \in E_i$ for any $i \in \{1, \cdots, 6\}$, by definition.\\

\noindent \underline{Case 1:} If $x \in E_1$, let $\alpha, \beta, \gamma, \delta$ be arrows as in Definition \ref{defi01} (1). We can suppose that $\alpha: 1 \rightarrow 3$, $\beta: 2 \rightarrow 3$, $\gamma: 3 \rightarrow 4$ and $\delta: 3 \rightarrow 5$. In this way, $x=3$ and the quiver $Q$ has the shape
  \begin{center}
	\begin{tikzpicture}
	\node  (q) at (-0.5, 1) {$Q$:};
	\node  (a) at (0, 1) {$1$};
	\node  (b) at (1, 0) {$3$};
	\node  (c) at (0, -1) {$2$};
	\node  (d) at (2, 1) {$4$};
	\node  (e) at (2, -1) {$5$};
	\node  (f) at (1,-1.5 ) {$Q'$};
	\draw [->] (a) -- node[above] { $\alpha$} (b);
	\draw [->] (c) -- node[left] { $\beta$} (b);
	\draw [->] (b) -- node[above] { $\gamma$} (d);
	\draw [->] (b) -- node[right] { $\delta$} (e);
	\draw [dotted] (-0.5,-0.8) -- (-0.5, -1.8);
	\draw [dotted] (-0.5,-0.8) -- (2.5, -0.8);
	\draw [dotted] (2.5,-0.8) -- (2.5, -1.8);
	\draw [dotted] (-0.5,-1.8) -- (2.5, -1.8);
	\end{tikzpicture}
\end{center} 
where $Q'$ is the full subquiver of $Q$ with $Q'_0=Q_0\backslash\{1, 3, 4\}$. By hypotheses $1, 4 \not \in D$. Moreover, since $3$ is an exceptional vertex, then it is not gentle, in particular, $3$ is not a special vertex. Hence $D \subset Q_0\backslash \{1,3,4\}$. 

Since $4$ is a sink and for any $d \in D$ there is no arrow $d\rightarrow 4$, then $A[D]$ is derived equivalent to $\mu_3^-(A)[D]$ by Lemma \ref{lemma3}. Moreover,  $\mu_{4}^-(A)\cong K\Omega/\langle\mathcal{I}^\ast\rangle$ where   
\begin{center}
	\begin{tikzpicture}
	\node  (q) at (-0.5, 0) {$\Omega$:};
	\node  (a) at (0, 0) {$1$};
	\node  (b) at (1, 0) {$4$};
	\node  (c) at (1, -1) {$2$};
	\node  (d) at (2, 0) {$3$};
	\node  (e) at (2, -1) {$5$};
	\node  (f) at (1.5,-1.5 ) {$Q'$};
	\draw [->] (a) -- node[above] { $\alpha^\ast$} (b);
	\draw [->] (c) -- node[left] { $\beta^\ast$} (b);
	\draw [->] (b) -- node[above] { $\gamma^\ast$} (d);
	\draw [->] (d) -- node[right] { $\delta$} (e);
	\draw [dotted] (0.5,-0.8) -- (0.5, -1.8);
	\draw [dotted] (0.5,-0.8) -- (2.5, -0.8);
	\draw [dotted] (2.5,-0.8) -- (2.5, -1.8);
	\draw [dotted] (0.5,-1.8) -- (2.5, -1.8);
	\end{tikzpicture}
\end{center} 
and $\mathcal{I}^\ast=(\mathcal{I} \cap KQ') \cup \{\alpha^\ast\gamma^\ast\delta\} \cup \{\mu\beta^\ast|\mu \in Q'_1, \mu\beta \in \mathcal{I}\} \cup \{\delta \mu | \mu\in Q'_1, \delta\mu \in \mathcal{I}\}$.

Again, since $1$ is a source in $\Omega$ and for any $d \in D$ there is no arrows $1 \rightarrow d$, then $\mu_{4}^-(A)[D]$ is derived equivalent to $\mu_1^+(\mu_{4}^-(A))[D]$ by Lemma \ref{lemma3}. Moreover, a straightforward calculation shows that $\mu_1^+(\mu_{4}^-(A)) \cong K\Gamma/\langle \tilde{\mathcal{I}} \rangle$ where
\begin{center} 
	\begin{tikzpicture}
	\node  (q) at (-1.5, 0) {$\Gamma$:};
	\node  (a) at (0, 0) {$2$};
	\node  (b) at (1, 0) {$4$};
	\node  (c) at (2, 1) {$1$};
	\node  (d) at (2, -1) {$3$};
	\node  (e) at (3, 0) {$5$};
	\node  (f) at (2,-1.6 ) {$Q'$};
	\draw [->] (a) -- node[above] { $\tilde{\beta}$} (b);
	\draw [->] (b) -- node[above] { $\tilde{\alpha}$} (c);
	\draw [->] (b) -- node[above] { $\tilde{\gamma}$} (d);
	\draw [->] (c) -- node[above] { $\tilde{\lambda}$} (e);
	\draw [->] (d) -- node[above] { $\tilde{\delta}$} (e);
	\draw [dotted] (-0.2,0.5) -- (0.2, 0.5);
	\draw [dotted] (-0.2,0.5) -- (-0.2, -2);
	\draw [dotted] (0.2,0.5) -- (0.2, -1.3);
	\draw [dotted] (0.2, -1.3) -- (2.8, -1.3);
	\draw [dotted] (-0.2,-2) -- (3.2, -2);
	\draw [dotted] (2.8, -1.3) -- (2.8, 0.5);
	\draw [dotted] (3.2, -2) -- (3.2, 0.5);
	\draw [dotted] (2.8, 0.5) -- (3.2, 0.5);
	\end{tikzpicture}
\end{center}	
and  $\tilde{\mathcal{I}}= (\mathcal{I}\cap KQ') \cup \{\tilde{\alpha}\tilde{\lambda} - \tilde{\gamma} \tilde{\delta}\} \cup\{\mu \tilde{\beta}|\mu\in Q'_1, \mu\beta \in \mathcal{I}\} \cup \{\tilde{\delta} \mu, \tilde{\lambda} \mu| \mu \in Q'_1, \delta \mu \in \mathcal{I} \}$.

Now, let $\Delta$ be the full subquiver of $\Gamma$ with $\Delta_0=\Gamma_0\backslash\{1\} $ and let $J$ be the ideal of $K\Delta$ generated by $K\Delta\cap \tilde{\mathcal{I}}$.
\begin{center}
	\begin{tikzpicture}
	\node  (q) at (-0.5, 0) {$\Delta$:};
	\node  (a) at (0, 0) {$2$};
	\node  (b) at (1, 0) {$4$};
	\node  (d) at (2, 0) {$3$};
	\node  (e) at (3, 0) {$5$};
	\node  (f) at (1.5,-0.8 ) {$Q'$};
	\draw [->] (a) -- node[above] { $\tilde{\beta}$} (b);
	\draw [->] (b) -- node[above] { $\tilde{\gamma}$} (d);
	\draw [->] (d) -- node[above] { $\tilde{\delta}$} (e);
	\draw [dotted] (-0.2,0.5) -- (0.2, 0.5);
	\draw [dotted] (-0.2,0.5) -- (-0.2, -1);
	\draw [dotted] (0.2,0.5) -- (0.2, -0.5);
	\draw [dotted] (0.2, -0.5) -- (2.8, -0.5);
	\draw [dotted] (-0.2,-1) -- (3.2, -1);
	\draw [dotted] (2.8, -0.5) -- (2.8, 0.5);
	\draw [dotted] (3.2, -1) -- (3.2, 0.5);
	\draw [dotted] (2.8, 0.5) -- (3.2, 0.5);
	\end{tikzpicture}
\end{center}

Clearly $B=K\Delta/J$ is a gqs with $n-1$ exceptional vertices and $S=\{3\}\cup D \subset \Delta_0$ is a set of special but not ordinary vertices. Since  $K\Gamma/\langle\tilde{\mathcal{I}}\rangle\cong B[3]$, then $\mu_1^+(\mu_{4}^-(A))[D]\cong B[S]$.\\

\noindent \underline{Case 2:} If $x \in E_2$, let $\alpha, \beta, \gamma, \delta$ be arrows as in Definition \ref{defi01} (2). We can suppose that $\alpha: 1 \rightarrow 3$, $\beta: 2 \rightarrow 3$, $\gamma: 3 \rightarrow 4$ and $\delta: 3 \rightarrow 2$. Note that this case is completely analogous to the previous case if we identify the vertices $2$ and $5$. In this way, $x=3$ and the quiver $Q$ has the shape
\begin{center}
	\begin{tikzpicture}
	\node  (q) at (-0.5, 1) {$Q$:};
	\node  (a) at (0, 1) {$1$};
	\node  (b) at (1, 0) {$3$};
	\node  (c) at (1, -1.3) {$2$};
	\node  (d) at (2, 1) {$4$};
	\draw [->] (a) -- node[above] { $\alpha$} (b);
	\draw [->] (c) to [bend left] node[left] { $\beta$} (b);
	\draw [->] (b) -- node[above] { $\gamma$} (d);
	\draw [->] (b) to [bend left]  node[right] { $\delta$} (c);
	\draw[dotted] (1,-1.8) node {$Q'$} (1,-1.8) circle (0.6);	
\end{tikzpicture}
\end{center}
where $Q'$ is the full subquiver of $Q$ with $Q'_0=Q_0\backslash\{1, 3, 4\}$. By hypotheses we have that $D \subset Q_0\backslash \{1,3,4\}$.

Proceeding as in the case (1), we have that $A[D]$ is derived equivalent to  $\mu_1^+(\mu_{4}^-(A)) \cong K\Gamma/\langle\tilde{I}\rangle$ where
\[
\begin{tikzpicture}
\node  (q) at (0, 0) {$\Gamma$:};
\node  (b) at (1, 0) {$4$};
\node  (c) at (2, 0.4) {$1$};
\node  (d) at (2, -0.4) {$3$};
\node  (e) at (3, 0) {$2$};
\draw [->] (e) to [out =90, in =90] node [above] {$\tilde{\beta}$} (b);
\draw [->] (b) -- node[above] { $\tilde{\alpha}$} (c);
\draw [->] (b) -- node[below] { $\tilde{\gamma}$} (d);
\draw [->] (c) -- node[above] { $\tilde{\tiny{\lambda}}$} (e);
\draw [->] (d) -- node[below] { $\tilde{\delta}$} (e);
\draw[dotted] (3.5,0) node {$Q'$} (3.5,0) circle (0.6);
\end{tikzpicture}
\]
and $\tilde{\mathcal{I}}= (\mathcal{I}\cap K Q') \cup \{\tilde{\alpha}\tilde{\lambda} - \tilde{\gamma} \tilde{\delta}\}\cup \{\tilde{\delta}\tilde{\beta}, \tilde{\lambda}\tilde{\beta}\} \cup\{\mu \tilde{\beta}|\mu\in Q'_1, \mu\beta \in \mathcal{I}\} \cup \{\tilde{\delta} \mu, \tilde{\lambda} \mu| \mu \in Q'_1, \delta \mu \in \mathcal{I} \}$.

Let $B=K\Delta/J$ be the algebra where $\Delta$ is the full subquiver of $\Gamma$ such that $\Delta_0=Q_0\backslash\{1\}$ and let $J$ be the ideal of $K\Delta$ generated by $\tilde{\mathcal{I}}\cap K\Delta$. It is clear that $B$ is a gqs algebra with $n-1$ exceptional vertices, $S=\{3\}\cup D \subset \Delta_0$ is a set of special but not ordinary vertices and $K\Gamma/\langle\tilde{\mathcal{I}}\rangle\cong B[3]$. Hence $\mu_1^+(\mu_{4}^-(A))[D]\cong B[S]$.\\

\noindent \underline{Case 3:} If $x \in E_3$, let $\alpha, \beta, \gamma$ be arrows as in Definition \ref{defi01} (3). We may assume without loss of generality that $\alpha: 1 \rightarrow 3$, $\beta: 2 \rightarrow 3$ and $\gamma: 3 \rightarrow 4$. Thus, $x=3$ and the quiver $Q$ has the shape
\begin{center}
	\begin{tikzpicture}
	\node  (q) at (-1.5, 1) {$Q$:};
	\node  (a) at (0, 1) {$1$};
	\node  (b) at (0, -1) {$2$};
	\node  (c) at (1, 0) {$3$};
	\node  (d) at (2, 0) {$4$};
	\draw[dotted] (2.5,0) node {$Q'$} (2.5,0) circle (0.6);
	\draw [->] (a) -- node[above] { $\alpha$} (c);
	\draw [->] (b) -- node[above] { $\beta$} (c);
	\draw [->] (c) -- node[above] { $\gamma$} (d);
	\end{tikzpicture}
\end{center}
where $x=3$ and $Q'$ is the full subquiver of $Q$ with $Q'_0=Q_0\backslash \{1, 2, 3\}$. Note that $D \subset Q_0\backslash \{1, 2, 3\}$ by hypotheses.

Let $\Delta$ be the full subquiver of $Q$ such that $\Delta_0=Q_0\backslash\{2\}$ and let $J$ be the ideal of $K\Delta$ generated by $\mathcal{I}\backslash \{\beta\gamma\}$. Clearly $B=K\Delta/J$ is a gqs algebra with $n-1$ exceptional vertices, $S=\{3\}\cup D \subset \Delta_0$ is a set of special but not ordinary vertices and $A= B[1]$. Therefore  $A[D]\cong B[S]$.\\

\indent \underline{Case 4:} If $x \in E_4$, let $\alpha, \beta, \gamma$ be arrows as in  Definition \ref{defi01} (4). We may assume without loss of generality that $\alpha: 1 \rightarrow 3$, $\beta: 2 \rightarrow 3$ and $\gamma: 3 \rightarrow 4$. Thus $x=3$, $\alpha\gamma, \beta\gamma\in I$ and the quiver $Q$ has the shape
\begin{center}
	\begin{tikzpicture}
	\node  (q) at (-1.5, 1) {$Q$:};
	\node  (a) at (0, 1) {$1$};
	\node  (b) at (0, -1) {$2$};
	\node  (c) at (1, 0) {$3$};
	\node  (d) at (2, 0) {$4$};
	\draw[dotted] (-0.5,-1) node {$Q'$} (-0.5,-1) circle (0.6);
	\draw [->] (a) -- node[above] { $\alpha$} (c);
	\draw [->] (b) -- node[above] { $\beta$} (c);
	\draw [->] (c) -- node[above] { $\gamma$} (d);
	\end{tikzpicture}
\end{center}
where  and $Q'$ is the full subquiver of $Q$ with $Q'_0=Q_0\backslash\{1, 3, 4\}$. Note that $D \subset Q_0\backslash \{1,3,4\}$ by hypotheses. 

Since $4$ is a sink and for any $d \in D$ there is no arrow $d \rightarrow 4$, then $A[D]$ is derived equivalent to $\mu_4^-(A)[D]$ by Lemma \ref{lemma3}. Moreover, a straightforward calculation shows that  $\mu_{4}^-(A)\cong K\Omega/\langle \mathcal{I}^\ast \rangle$ where
	\begin{center}
	\begin{tikzpicture}
	\node  (q) at (-1.5, 0) {$\Omega$:};
	\node  (a) at (2, 1) {$3$};
	\node  (b) at (0, 0) {$2$};
	\node  (c) at (1, 0) {$4$};
	\node  (d) at (2, -1) {$1$ };
	\draw[dotted] (-0.5,0) node {$Q'$} (-0.5,0) circle (0.6);
	\draw [->] (c) -- node[above] { $\gamma^\ast$} (a);
	\draw [->] (b) -- node[above] { $\beta^\ast$} (c);
	\draw [->] (d) -- node[below] { $\alpha^\ast$} (c);
	\end{tikzpicture}
\end{center}
and $\mathcal{I}^\ast=(\mathcal{I} \cap K Q') \cup \{\mu\beta^\ast|\mu \in Q'_1, \mu\beta \in \mathcal{I}\}$.

Since $1$ is source and for any $d \in D$ there is no arrow $1 \rightarrow d$, then $\mu_{4}^-(A)[D]$ is derived equivalent to $\mu_1^+(\mu_{4}^-(A))[D]$ by Lemma \ref{lemma3}. Furthermore, $\mu_1^+(\mu_{4}^-(A)) \cong K\Gamma/\langle \mathcal{I}^\ast\rangle$ where $\Gamma$ is the  quiver
\begin{center}
	\begin{tikzpicture}
	\node  (q) at (-1.5, 0) {$\Gamma$:};
	\node  (a) at (2, 1) {$3$};
	\node  (b) at (0, 0) {$2$};
	\node  (c) at (1, 0) {$4$};
	\node  (d) at (2, -1) {$1$ \ .};
	\draw[dotted] (-0.5,0) node {$Q'$} (-0.5,0) circle (0.6);
	\draw [->] (c) -- node[above] { $\gamma^\ast$} (a);
	\draw [->] (b) -- node[above] { $\beta^\ast$} (c);
	\draw [->] (c) -- node[above] { $\tilde{\alpha}$} (d);
	\end{tikzpicture}
\end{center}

Let $\Delta$ be the full subquiver of $\Gamma$ such that $\Delta_0=\Gamma_0\backslash\{1\} $ and let $J$ be the ideal of $K\Delta$ generated by $K\Delta\cap \mathcal{I}^\ast$.
Obviously $B=K\Delta/J$ is a gqs algebra with $n-1$ exceptional vertices, $S=\{3\}\cup D\subset \Delta_0$ is a set of special but not ordinary vertices and $K\Gamma/L=B[3]$.  Therefore $\mu_1^+(\mu_{4}^-(A))[D]\cong B[S]$.\\

Finally, if $x \in E_5$ or $x \in E_6$, then we proceed in a similar way as in cases 3 and 4, respectively. Thus we have that $A[D]$ is derived equivalent to $B[S]$, where $B$ is a gqs algebra with $n-1$ exceptional vertices and $S$ is a set of special but not ordinary vertices of $B$.
\end{proof}

\begin{prop}\label{prop1}
Let $A=KQ/I$ be a gqs algebra and let $D \subset Q_0$ be a set of special but not ordinary vertices. Then $A[D]$ is derived equivalent to some skewed-gentle algebra. In particular, $A[D]$ is derived tame.
\end{prop}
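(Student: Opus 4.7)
The plan is to prove this by induction on $n$, the number of exceptional vertices of $A$, using Lemma~\ref{lemma4} as the engine driving the inductive step.

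For the base case $n=0$: by the definition of a gqs algebra every vertex of $Q$ must be either gentle or exceptional, so the absence of exceptional vertices forces $A$ itself to be gentle. Moreover, since the ordinary vertices are defined (via the sets $O_1,\dots,O_6$) in terms of the exceptional ones, there are no ordinary vertices either. Hence $D$ is simply a set of special vertices in a gentle algebra, and $A[D]$ is a skewed-gentle algebra by the definition recalled after Definition~\ref{defi-gqs}. The derived tameness of $A[D]$ in this base case is exactly the result cited from \cite{MR1738420,MR1986209} immediately after that definition.

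For the inductive step, assume the statement holds for all gqs algebras with at most $n-1$ exceptional vertices and let $A$ have exactly $n>0$ exceptional vertices, with $D\subset Q_0$ a set of special but not ordinary vertices. By Lemma~\ref{lemma4}, $A[D]$ is derived equivalent to some $B[S]$ in which $B$ is a gqs algebra with $n-1$ exceptional vertices and $S\subset \Delta_0$ is a set of special but not ordinary vertices of $B$. Applying the inductive hypothesis to the pair $(B,S)$, the algebra $B[S]$ is derived equivalent to a skewed-gentle algebra, and by transitivity of derived equivalence so is $A[D]$.

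The final ``in particular'' assertion is then immediate: skewed-gentle algebras are derived tame, and derived tameness is preserved under derived equivalence by the remark following Theorem~A of \cite{MR1913081} recalled in the preliminaries. The main conceptual point of this proposition is therefore not in the induction itself, which is essentially bookkeeping, but in the case analysis of Lemma~\ref{lemma4}; given that lemma, the only subtlety to verify here is that the base case $n=0$ really lands inside the class of skewed-gentle algebras, which is precisely what the emptiness of the ordinary-vertex set guarantees.
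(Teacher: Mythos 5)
Your proposal is correct and follows essentially the same route as the paper: induction on the number of exceptional vertices, with the gentle/skewed-gentle definition handling the base case and Lemma \ref{lemma4} providing the inductive step. The extra remarks you add (that $n=0$ forces the ordinary-vertex set to be empty, and that derived tameness passes through derived equivalences) are fine and only make explicit what the paper leaves implicit.
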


\begin{proof}
We proceed by induction on the number of exceptional vertices of $A$. If $A$ does not have exceptional vertices, then $A$ is a gentle algebra and $A[D]$ is a skewed-gentle algebra by definition.

If $A$ has $n>0$ exceptional vertices, then by Lemma \ref{lemma4} we have that $A[D]$ is derived equivalent to $B[S]$, where $B$ is a gqs algebra with $n-1$ exceptional vertices and $S$ is a set of special but not ordinary vertices of $B$. Hence, $B[S]$ is derived equivalent to some skewed-gentle algebra by induction hypotheses.       
\end{proof}

\noindent\textbf{Proof of Theorem A.} For any gqs algebra $A$, we have that $A= A[D]$ where $D=\emptyset$.  Therefore,  the result follows by  Proposition \ref{prop1}. \hspace*{4.7cm} $\square$   

\section{Quadratic String Algebras}

The objective of this section is to prove the Theorems B and D.

\begin{lemma}\label{lemma5}
Let $A=KQ/I$ be a monomial algebra and suppose that there are arrows $\alpha, \beta, \gamma \in Q_1$, $\alpha\not=\beta$, such that  $s(\alpha)=t(\gamma)$, $t(\alpha)=s(\gamma)=t(\beta)$ and  $\alpha\gamma, \gamma\alpha, \beta\gamma \in I$ (resp. $s(\alpha)=s(\beta)=t(\gamma)$, $t(\alpha)=s(\gamma)$ and  $\alpha\gamma, \gamma\alpha, \gamma\beta \in I$). Then $A$ is derived wild.
\end{lemma}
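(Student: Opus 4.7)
The plan is to apply Lemma \ref{lemma1}(b) to the Galois covering $F:\tilde{A}\to A$: I will exhibit a full, bounded subcategory $\widehat{A}$ of $\tilde{A}$ isomorphic to $K\Delta/\mathcal{R}_{\Delta}^{2}$ with $\Delta$ neither Dynkin nor Euclidean, and conclude that $A$ is derived wild. I argue the first configuration; the second is entirely analogous, with the role of $\beta$ (ending at $t(\alpha)$ in the first case) replaced by an arrow starting at $s(\alpha)$.

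Set $1=s(\alpha)=t(\gamma)$, $2=t(\alpha)=s(\gamma)=t(\beta)$, $3=s(\beta)$, and take $x_{0}=1$ as the base point for $\pi_{1}(Q,I)$. Since $\alpha\gamma$ is a non-trivial reduced closed walk at $1$, the fundamental group has a $\mathbb{Z}$-factor and $\tilde{Q}$ contains, for $i=0,1,2$, the lifts $1_{i}=(\alpha\gamma)^{i}$, $2_{i}=(\alpha\gamma)^{i}\alpha$ and $3_{i}=(\alpha\gamma)^{i}\alpha\beta^{-1}$, joined by the lifted arrows $\alpha_{i}:1_{i}\to 2_{i}$, $\gamma_{i}:2_{i}\to 1_{i+1}$ (for $i=0,1$) and $\beta_{i}:3_{i}\to 2_{i}$. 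Each of $\alpha_{i}\gamma_{i}$, $\gamma_{i}\alpha_{i+1}$ and $\beta_{i}\gamma_{i}$ is the lift of a generator of $I$, hence lies in $\tilde{I}$.

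Let $V=\{1_{i},2_{i},3_{i}\mid i=0,1,2\}$ and $\Delta$ the full subquiver of $\tilde{Q}$ on $V$. The inherited arrows are $\alpha_{0},\alpha_{1},\alpha_{2},\gamma_{0},\gamma_{1},\beta_{0},\beta_{1},\beta_{2}$, and the underlying graph is a tree whose two vertices of degree three are $2_{0}$ (with neighbours $1_{0}$, $3_{0}$, $1_{1}$) and $2_{1}$ (with neighbours $1_{1}$, $3_{1}$, $1_{2}$). At $2_{0}$ both of $1_{0}$ and $3_{0}$ are leaves, whereas at $2_{1}$ only $3_{1}$ is an adjacent leaf, the third arm being the path of three edges $2_{1}-1_{2}-2_{2}-3_{2}$ ending at the leaf $3_{2}$. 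A tree of Dynkin or $\tilde{E}$-type has at most one vertex of degree three, and a tree of type $\tilde{D}_{n}$ forces each of its two degree-three vertices to carry two adjacent leaves (failing at $2_{1}$); hence $\Delta$ is neither Dynkin nor Euclidean.

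It remains to identify $\widehat{A}$, the full subcategory of $\tilde{A}$ on $V$, with $K\Delta/\mathcal{R}_{\Delta}^{2}$. Since $\tilde{Q}$ is a tree, there is at most one directed path between any two of its vertices. Moreover, appending any arrow of $Q$ distinct from $\alpha,\beta,\gamma$ (or its formal inverse) to any of the nine walks defining $V$ produces a reduced walk ending in that arrow, which is not of the form listed; consequently the only $\tilde{Q}$-edges between two vertices of $V$ are the eight arrows already in $\Delta$. Therefore for distinct $v,w\in V$ the unique $\tilde{Q}$-path from $v$ to $w$ (if any) lies in $\Delta$, and whenever it has length at least two it contains one of $\alpha_{i}\gamma_{i}$, $\gamma_{i}\alpha_{i+1}$, $\beta_{i}\gamma_{i}$ as a subpath and therefore belongs to $\tilde{I}$. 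This yields $\widehat{A}\cong K\Delta/\mathcal{R}_{\Delta}^{2}$, and Lemma \ref{lemma1}(b) concludes that $A$ is derived wild. The main obstacle is precisely this last identification: $Q$ may well carry arrows beyond $\alpha,\beta,\gamma$, and one has to exclude the possibility that such an arrow produces extra morphisms inside the chosen full subcategory; the tree structure of $\tilde{Q}$ together with the explicit combinatorial description of $V$ is exactly what rules this out.
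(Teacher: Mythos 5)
Your proposal is correct and follows essentially the same route as the paper: exhibit a full, bounded, radical-square-zero subcategory of the Galois covering whose underlying tree is neither Dynkin nor Euclidean and apply Lemma \ref{lemma1}(b). The only differences are cosmetic — you pick a nine-vertex subtree (the paper uses an eight-vertex one of the same flavour) and you spell out the verification that the full subcategory is indeed $K\Delta/\mathcal{R}_\Delta^2$, which the paper dispatches with ``by construction.''
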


\begin{proof} We assume that $s(\alpha)=t(\gamma)$, $t(\alpha)=s(\gamma)=t(\beta)$ and $\alpha\gamma, \gamma\alpha, \beta\gamma \in I$ (the other case is similar). By construction,  the Galois covering  of $A$ (induced by fundamental group $\pi_1(Q, I)$) has a full subcategory $\widehat{A}\cong K\widehat{Q}/\mathcal{R}_{\widehat{Q}}^2$, where $\widehat{Q}$ has the following shape:
	\begin{center}
		\begin{tikzpicture}
		\node  (q) at (-0.5, 0) {$\widehat{Q}$:};
		\node  (a) at (0, 0) {$\bullet$};
		\node  (b) at (1, 0) {$\bullet$};
		\node  (c) at (2, 0) {$\bullet$};
		\node  (d) at (3, 0) {$\bullet$};
		\node  (e) at (4, 0) {$\bullet$};
		\node  (f) at (5, 0) {$\bullet$};
		\node  (g) at (2, -1) {$\bullet$};
		\node  (h) at (4, -1) {$\bullet$};
		\draw [->] (a) -- node[above] { $\gamma$} (b);
		\draw [->] (b) -- node[above] { $\alpha$} (c);
		\draw [->] (c) -- node[above] { $\gamma$} (d);
		\draw [->] (d) -- node[above] { $\alpha$} (e);
		\draw [->] (e) -- node[above] { $\gamma$} (f);
		\draw [->] (g) -- node[right] { $\beta$} (c);
		\draw [->] (h) -- node[right] { $\beta$} (e);
		\end{tikzpicture}
	\end{center}
Therefore, $A$ is derived wild  by  Lemma \ref{lemma1}.   
\end{proof}

\begin{lemma}\label{lemma6}
Let $A=KQ/I$ be a derived tame monomial algebra and suppose there are arrows $\alpha, \beta, \gamma \in Q_1$, $\alpha \not=\beta$, such that $t(\alpha)=t(\beta)=s(\gamma)$ and $\alpha \gamma, \beta \gamma \in I$ $($resp. $s(\alpha)=s(\beta)=t(\gamma)$ and $\gamma \alpha, \gamma \beta \in I)$. Then $\alpha\not=\gamma$, $\beta\not=\gamma$ and the vertices  $s(\alpha), s(\beta), t(\alpha)$ and $t(\gamma)$ are pairwise distinct.     
\end{lemma}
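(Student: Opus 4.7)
The plan is to argue by contrapositive: each possible failure of the conclusion will force $A$ to be derived wild, contradicting the hypothesis of derived tameness.

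If $\alpha = \gamma$, then $s(\alpha) = s(\gamma) = x = t(\alpha)$ makes $\alpha$ a loop at $x$, and the relations $\alpha\gamma = \gamma\alpha = \alpha^2 \in I$ together with $\beta\gamma = \beta\alpha \in I$ verify the hypotheses of Lemma \ref{lemma5} applied to the triple $(\alpha, \beta, \alpha)$, yielding a contradiction. The case $\beta = \gamma$ is symmetric.

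Assume now that $\alpha, \beta, \gamma$ are pairwise distinct and that some equality among the vertices $\{s(\alpha), s(\beta), x, t(\gamma)\}$ holds. For each such coincidence I would pass to the Galois cover $F : \tilde{A} \to A$ and exhibit a full, bounded subcategory $\widehat{A}$ of $\tilde{A}$ of the form $K\Delta/\mathcal{R}_\Delta^2$ with $\Delta$ neither Dynkin nor Euclidean; Lemma \ref{lemma1}(b) then gives derived wildness. The key observation is that the relations $\alpha\gamma, \beta\gamma \in I$ annihilate the corresponding length-$2$ lifts in $\tilde{Q}$, so any finite portion of $\tilde{Q}$ whose only length-$2$ paths are of these two types produces a rad-squared-zero subcategory. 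For example, if $s(\alpha) = s(\beta)$, the two parallel arrows unwrap in the cover into a bi-infinite chain of alternating $y$- and $x$-vertices; adjoining $\gamma$-leaves at a suitable finite portion yields a caterpillar tree with two trivalent vertices separated by a path of length $\geq 2$, hence outside $\tilde{D}$-type and thus wild. The loop cases (where $\alpha$, $\beta$, or $\gamma$ is a loop at $x$) proceed analogously, using finite-dimensionality of $A$ to locate a power of the loop in $I$, which provides enough annihilations to realize either a vertex of valency $\geq 4$ or multiple trivalent vertices in the subcategory.

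The main obstacle will be the case $s(\alpha) = t(\gamma)$ (symmetrically $s(\beta) = t(\gamma)$), where $\alpha$ and $\gamma$ already form a $2$-cycle at $x$. If $\gamma\alpha \in I$ then Lemma \ref{lemma5} applies directly to $(\alpha, \beta, \gamma)$. Otherwise $\gamma\alpha \notin I$ introduces a non-zero length-$2$ lift in the cover, obstructing a naive rad-squared-zero subcategory; however, since $\alpha\gamma \in I$ forces $(\gamma\alpha)^2 = \gamma(\alpha\gamma)\alpha \in I$, the element $\gamma\alpha$ is square-zero in $A$. One can then apply a mutation (Lemma \ref{lemma3}) at an appropriate source or sink of $A$ to rewrite the algebra in a form where Lemma \ref{lemma5} becomes applicable, or construct a more elaborate subcategory of the cover by selectively including $x$-lifts so as to avoid $\gamma\alpha$-type length-$2$ paths while producing a connected non-Dynkin, non-Euclidean graph. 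The underlying technical difficulty throughout is the combinatorial verification that the constructed finite subcategory realizes a wild underlying graph.
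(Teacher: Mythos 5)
Your outline does follow the paper's strategy (Lemma \ref{lemma5} for $\alpha=\gamma$ and $\beta=\gamma$; radical-square-zero full subcategories of the Galois cover plus Lemma \ref{lemma1}(b) for the vertex coincidences), but two steps are genuinely gapped. First, in the case $s(\alpha)=s(\beta)$ your wildness criterion is wrong as stated: a caterpillar with two trivalent vertices separated by a path of length $\geq 2$ is \emph{not} automatically outside type $\widetilde{D}$. The graph $\widetilde{D}_6$ consists precisely of two trivalent vertices at distance $2$, each carrying two pendant edges, and the most natural finite portion of your unwound chain --- five alternating $x$- and $y$-lifts with a $\gamma$-leaf at each of the two interior $x$-lifts --- is exactly $\widetilde{D}_6$, hence derived \emph{tame} by Theorem \ref{rad}, and no contradiction results. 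What forces a non-Dynkin, non-Euclidean tree is that one branch vertex carries a tail of length $\geq 2$ (or that there are at least three branch vertices); this is why the paper's subquiver is a zigzag on six chain vertices with only two $\gamma$-leaves, so that one trivalent vertex has tails of lengths $1,1$ and the other has tails of lengths $1,2$. The fix is easy, but the argument as you wrote it does not establish wildness.

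Second, and more seriously, the case $s(\alpha)=t(\gamma)$ (and symmetrically $s(\beta)=t(\gamma)$) with $\gamma\alpha\notin I$ --- which you yourself flag as the main obstacle --- is left unresolved: you propose either a mutation or a ``more elaborate'' choice of lifts, but carry out neither, and the cited Lemma \ref{lemma3} concerns compatibility of mutations with blowing-up at special vertices and gives no leverage here ($x$ is neither a source nor a sink). The paper settles this case with a short trick you circle around but never state: put $h=\gamma\alpha$, a nonzero element of $A$ by assumption; then $hh$ contains the subpath $\alpha\gamma$ and $\beta h$ contains $\beta\gamma$, so both lie in $I$. Choosing in the cover a chain of lifts of $x$ joined by $h$ (omitting the intermediate lifts of $t(\gamma)$) together with two incoming $\beta$-leaves yields a full, bounded, radical-square-zero subcategory whose underlying tree (two adjacent trivalent vertices, one with a tail of length $2$) is neither Dynkin nor Euclidean, and Lemma \ref{lemma1} finishes. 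Your treatment of the loop coincidences via a minimal power $h=\alpha^{n-1}$ with $\alpha^{n}\in I$ is in the spirit of the paper and is fine, modulo the same care about which tree is actually produced; but without the $h=\gamma\alpha$ step the proof of the lemma is incomplete.
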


\begin{proof}
We assume that $t(\alpha)=t(\beta)=s(\gamma)$ and $\alpha \gamma, \beta \gamma \in I$ (the other case is similar).

If $\alpha=\gamma$, then $s(\alpha)=t(\alpha)=t(\beta)$ and $\alpha^2, \beta\alpha \in I$, and so $A$ is derived wild by Lemma \ref{lemma5}, a contradiction. Hence $\alpha\not=\gamma$. In the same way $\beta\not=\gamma$.

Suppose that $s(\alpha)=t(\alpha)$. Since $I$ is admissible, there is $n\geq 2$ such that $\alpha^{n-1}\notin I$ but $\alpha^{n} \in I$. Putting $h=\alpha^{n-1}$, the Galois covering of $A$ has a full subcategory $\widehat{A}\cong K\widehat{Q}/\mathcal{R}_{\widehat{Q}}^2$, where $\widehat{Q}$ has the following shape:
\begin{center}
	\begin{tikzpicture}
	\node  (q) at (-0.5, 0) {$\widehat{Q}$:};
	\node  (a) at (0, 0) {$\bullet$};
	\node  (b) at (1, 0) {$\bullet$};
	\node  (c) at (2, 0) {$\bullet$};
	\node  (d) at (3, 0) {$\bullet$};
	\node  (e) at (4, 0) {$\bullet$};
	\node  (f) at (1, -1) {$\bullet$};
	\node  (g) at (2, -1) {$\bullet$};
	\draw [->] (a) -- node[above] { $h$} (b);
	\draw [->] (b) -- node[above] { $h$} (c);
	\draw [->] (c) -- node[above] { $h$} (d);
	\draw [->] (d) -- node[above] { $h$} (e);
	\draw [->] (b) -- node[right] { $\gamma$} (f);
	\draw [->] (c) -- node[right] { $\gamma$} (g);
	\end{tikzpicture}
\end{center}
Hence  $A$ is derived wild by Lemma \ref{lemma1}, a contradiction. Thus $s(\alpha)\not=t(\gamma)$. Similarly we have that $s(\beta)\not=t(\beta)=t(\alpha)$ and $s(\alpha)=s(\delta)\not=t(\delta)$.  

If $s(\alpha)=t(\gamma)$, then  $\gamma\alpha \notin I$ by Lemma \ref{lemma5}. Putting $h=\gamma\alpha$, the Galois covering of $A$ has a full subcategory $\widehat{A}\cong K\widehat{Q}/\mathcal{R}_{\widehat{Q}}^2$ given by following quiver: 
	\begin{center}
		\begin{tikzpicture}
		\node  (q) at (-0.5, 0) {$\widehat{Q}$:};
		\node  (a) at (0, 0) {$\bullet$};
		\node  (b) at (1, 0) {$\bullet$};
		\node  (c) at (2, 0) {$\bullet$};
		\node  (d) at (3, 0) {$\bullet$};
		\node  (e) at (4, 0) {$\bullet$};
		\node  (g) at (1, -1) {$\bullet$};
		\node  (h) at (2, -1) {$\bullet$};
		\draw [->] (a) -- node[above] { $h$} (b);
		\draw [->] (b) -- node[above] { $h$} (c);
		\draw [->] (c) -- node[above] { $h$} (d);
		\draw [->] (d) -- node[above] { $h$} (e);
		\draw [->] (g) -- node[right] { $\beta$} (b);
		\draw [->] (h) -- node[right] { $\beta$} (c);
		\end{tikzpicture}
	\end{center}
Hence $A$ is derived wild by Lemma \ref{lemma1}, a contradiction. Therefore $s(\alpha)\not=t(\gamma)$. Similarly we have that $s(\beta)\not=t(\gamma)$.	 

Finally, if $s(\alpha)=s(\beta)$, then the Galois covering of $A$ has a full subcategory $\widehat{A}\cong K\widehat{Q}/\mathcal{R}_{\widehat{Q}}^2$, where:
\begin{center}
	\begin{tikzpicture}
	\node  (q) at (-0.5, 0) {$\widehat{Q}$:};
	\node  (a) at (0, 0) {$\bullet$};
	\node  (b) at (1, 0) {$\bullet$};
	\node  (c) at (2, 0) {$\bullet$};
	\node  (d) at (3, 0) {$\bullet$};
	\node  (e) at (4, 0) {$\bullet$};
	\node  (f) at (5, 0) {$\bullet$};
	\node  (g) at (1, -1) {$\bullet$};
	\node  (h) at (3, -1) {$\bullet$};
	\draw [->] (a) -- node[above] { $\beta$} (b);
	\draw [->] (c) -- node[above] { $\alpha$} (b);
	\draw [->] (c) -- node[above] { $\beta$} (d);
	\draw [->] (e) -- node[above] { $\alpha$} (d);
	\draw [->] (e) -- node[above] { $\beta$} (f);
	\draw [->] (b) -- node[right] { $\gamma$} (g);
	\draw [->] (d) -- node[right] { $\gamma$} (h);
	\end{tikzpicture}
\end{center}
Again $A$ is derived wild by Lemma \ref{lemma1}, a contradiction.  Therefore $s(\alpha)\not=s(\beta)$.	
\end{proof}	

\begin{lemma}\label{lemma7} Let $A= KQ/I$ be a derived tame quadratic string algebra and suppose that there are arrows $\alpha, \beta, \gamma, \delta \in Q_1$, $\alpha \not=\beta$, $\gamma\not=\delta$ such that $t(\alpha)=t(\beta)=s(\gamma)=s(\delta)$ and $\alpha\gamma, \alpha\delta, \beta\gamma, \beta\delta \in I$. Then $Q$ has the following shape.
	\begin{center}
		\begin{tikzpicture}
		\node  (q) at (-0.5, 1) {$Q$:};
		\node  (a) at (0, 1) {$\bullet$};
		\node  (b) at (0, -1) {$\bullet$};
		\node  (c) at (1, 0) {$\bullet$};
		\node  (d) at (2, 1) {$\bullet$};
		\node  (e) at (2, -1) {$\bullet$};
		\draw [->] (a) -- node[above] { $\alpha$} (c);
		\draw [->] (b) -- node[above] { $\beta$} (c);
		\draw [->] (c) -- node[above] { $\gamma$} (d);
		\draw [->] (c) -- node[above] { $\delta$} (e);
		\end{tikzpicture}
	\end{center}
In particular, $A$ is a tree.
\end{lemma}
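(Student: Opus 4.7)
My plan is to split the proof into a distinctness step followed by a ``no further arrows'' step. Set $a:=s(\alpha)$, $b:=s(\beta)$, $x:=t(\alpha)=t(\beta)=s(\gamma)=s(\delta)$, $c:=t(\gamma)$, $d:=t(\delta)$.

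First, I would apply Lemma \ref{lemma6} to the triples $(\alpha,\beta,\gamma)$ and $(\alpha,\beta,\delta)$, and apply the \emph{resp.}\ statement of Lemma \ref{lemma6} to $(\gamma,\delta,\alpha)$ and $(\gamma,\delta,\beta)$. Combining the four pairwise-distinctness conclusions shows that the five vertices $a,b,x,c,d$ are mutually distinct, pinning down the subquiver on them to the depicted star.

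Second, I would argue by contradiction that no arrow of $Q$ lies outside $\{\alpha,\beta,\gamma,\delta\}$. Condition (i) of the definition of a quadratic string algebra already saturates $x$ with two incoming and two outgoing arrows, so any extra arrow must be attached to one of $a,b,c,d$. By the symmetry of the configuration it suffices to rule out an extra arrow at $a$ — either an outgoing $\alpha':a\to y$ with $y\neq x$, or an incoming $\mu:e\to a$. The strategy is to lift to the Galois covering $\tilde{A}$, choose suitable lifts of $\alpha,\beta,\gamma,\delta$ and of the extra arrow, and extract a full subcategory isomorphic to $K\hat{Q}/\mathcal{R}^2_{\hat{Q}}$ whose underlying tree $\hat{Q}$ carries the degree-$4$ vertex $\tilde{x}$ together with strictly more than five vertices. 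Such a $\hat{Q}$ is neither Dynkin nor Euclidean — the only Euclidean graph with a degree-$4$ vertex is $\tilde{D}_4$, which has exactly five vertices — so Theorem \ref{rad} makes $\hat{A}$ derived wild, and Lemma \ref{lemma1}(b) transports this to $A$, contradicting the hypothesis.

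Concretely, in the outgoing case I would take the full subcategory on $\{\tilde{y},\tilde{a},\tilde{b},\tilde{x},\tilde{c},\tilde{d}\}$: the only length-two paths in the subquiver are $\tilde{\alpha}\tilde{\gamma}$, $\tilde{\alpha}\tilde{\delta}$, $\tilde{\beta}\tilde{\gamma}$, $\tilde{\beta}\tilde{\delta}$ (the arrow $\tilde{\alpha}'$ has no continuation there), all lying in $\tilde{I}$, so the radical-squared-zero condition is automatic. For the incoming case with $\mu\alpha\in I$, the analogous subcategory on $\{\tilde{e},\tilde{a},\tilde{b},\tilde{x},\tilde{c},\tilde{d}\}$ works verbatim. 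The main obstacle is the remaining incoming case with $\mu\alpha\notin I$: the direct six-vertex subcategory is no longer radical-squared-zero, and simply collapsing $\tilde{\mu}\tilde{\alpha}$ into a composite arrow and omitting $\tilde{a}$ produces only $\tilde{D}_4$. To clear this case one exploits QSA condition (ii), which forces $\alpha$ to be the unique non-$I$ continuation of $\mu$ and makes any further extension eventually run into one of the four relations $\alpha\gamma,\alpha\delta,\beta\gamma,\beta\delta$; truncating the composite at the first such relation yields a radical-squared-zero subcategory with an underlying tree carrying $\tilde{x}$ of degree $4$ and a branch of length $\geq 2$, completing the contradiction. Symmetric arguments at $b,c,d$ exhaust all remaining possibilities, so $Q$ reduces to the displayed star and $A$ is a tree.
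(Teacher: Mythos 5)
Your first step (pairwise distinctness via Lemma \ref{lemma6}) and your treatment of an extra \emph{outgoing} arrow at $s(\alpha)$, or an extra incoming arrow $\mu:e\to s(\alpha)$ with $\mu\alpha\in I$, coincide with the paper's cases: one finds a radical-squared-zero full subcategory of the covering on six vertices whose tree has a degree-$4$ vertex, and concludes by Theorem \ref{rad} and Lemma \ref{lemma1}. The genuine gap is exactly in the case you flag as the main obstacle, namely $\mu:e\to s(\alpha)$ with $\mu\alpha\notin I$, and your proposed fix does not work. Since $\alpha\gamma,\alpha\delta\in I$, the composite $\mu\alpha$ "runs into a relation" immediately: truncating at the first relation gives the single nonzero morphism $h=\mu\alpha:e\to x$, and the resulting radical-squared-zero subcategory on $\{e,b,x,c,d\}$ is precisely the $\tilde D_4$ configuration you already dismissed. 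There is no way to manufacture the promised "branch of length $\geq 2$": the only candidate branch $e\to s(\alpha)\to x$ has nonzero composite, which is exactly what destroys the radical-squared-zero property, and condition (ii) of the quadratic string definition gives no further vertices to use. Indeed, in the extremal situation where $Q$ consists only of these six vertices and five arrows (which is a legitimate instance of the hypotheses, since the lemma does not assume $Q$ has cycles), the quiver is a tree, the Galois covering is the algebra itself, and it admits \emph{no} radical-squared-zero full subcategory of the required shape; so any strategy that only invokes Theorem \ref{rad} through Lemma \ref{lemma1}(b) must fail here.

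The paper closes this case differently: it takes the full six-vertex subcategory $K\Omega/L$ with $L$ generated by all length-two paths except $\lambda\alpha$, observes that $s(\lambda)$ is a source with no relation starting there, and applies the positive mutation $\mu^+_{s(\lambda)}$ (Definition \ref{defi02}); the mutated algebra is the radical-squared-zero algebra on the six-vertex tree with a degree-$4$ vertex, hence derived wild by Theorem \ref{rad}, and derived wildness passes back through the derived equivalence and then to $A$ by Lemma \ref{lemma1}(a). Some step of this kind — a derived equivalence, not merely a choice of subcategory — is unavoidable, so to complete your argument you should replace the "truncation" paragraph by this mutation (or an equivalent tilting) argument.
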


\begin{proof} Note that the arrows $\alpha, \beta, \gamma$ and $\delta$ are pairwise distinct and the vertices $s(\alpha), s(\beta), t(\alpha), t(\gamma)$ and $t(\delta)$ are also pairwise distinct by Lemma \ref{lemma6}.
	 	
Suppose that $\{\alpha, \beta, \gamma, \delta\} \not= Q_1$. Since $Q$ is connected quiver, then there is an arrow $\lambda\in Q_1\backslash \{\alpha, \beta, \gamma,\delta\}$ such that $s(\lambda)$ or $t(\lambda)$ belongs to $\{s(\alpha), s(\beta), t(\gamma), t(\delta)\}$.
	
If $s(\lambda)= s(\alpha)$ or $t(\lambda)= s(\alpha)$, then the Galois covering of $A$ has a full subcategory  $\tilde{A}$ which is  isomorphic to the path algebra  of some following  quivers wiht relations:
\begin{flushleft}
	\begin{tabular}{l}
		\begin{tikzpicture}
		\node  (q) at (-2, 1) {$(1)$};
		\node  (q) at (-1, 1) {$\Delta$:};
		\node  (x) at (0, 1) {$\bullet$};
		\node  (a) at (1, 1) {$\bullet$};
		\node  (b) at (1,-1 ) {$\bullet$};
		\node  (c) at (2, 0) {$\bullet$};
		\node  (d) at (3, 1) {$\bullet$};
		\node  (e) at (3, -1) {$\bullet$};
		\node  (i) at (6, 1) { and \ \ $J=\mathcal{R}_{\Delta}^2$;};
		\draw [->] (a) -- node[above] { $\alpha$} (c);
		\draw [->] (b) -- node[above] { $\beta$} (c);
		\draw [->] (c) -- node[above] { $\gamma$} (d);
		\draw [->] (c) -- node[above] { $\delta$} (e);
		\draw [->] (a) -- node[above] { $\lambda$} (x);
		\end{tikzpicture}
	\end{tabular}
\end{flushleft}	

\begin{flushleft}
	\begin{tabular}{l}
		
		\begin{tikzpicture}
		\node  (q) at (-2, 1) {(2)};
		\node  (q) at (-1, 1) {$\Gamma$:};
		\node  (x) at (0, 1) {$\bullet$};
		\node  (a) at (1, 1) {$\bullet$};
		\node  (b) at (1,-1 ) {$\bullet$};
		\node  (c) at (2, 0) {$\bullet$};
		\node  (d) at (3, 1) {$\bullet$};
		\node  (e) at (3, -1) {$\bullet$};
		\node  (i) at (6, 1) { and \ \  $H=\mathcal{R}_{\Gamma}^2$;};
		\draw [->] (a) -- node[above] { $\alpha$} (c);
		\draw [->] (b) -- node[above] { $\beta$} (c);
		\draw [->] (c) -- node[above] { $\gamma$} (d);
		\draw [->] (c) -- node[above] { $\delta$} (e);
		\draw [->] (x) -- node[above] { $\lambda$} (a);
		\end{tikzpicture}
	\end{tabular}	
	
	\begin{tabular}{l}
		\begin{tikzpicture}
		\node  (q) at (-2, 1) {$(3)$};
		\node  (q) at (-1, 1) {$\Omega$:};
		\node  (x) at (0, 1) {$\bullet$};
		\node  (a) at (1, 1) {$\bullet$};
		\node  (b) at (1,-1 ) {$\bullet$};
		\node  (c) at (2, 0) {$\bullet$};
		\node  (d) at (3, 1) {$\bullet$};
		\node  (e) at (3, -1) {$\bullet$};
		\node  (i) at (7, 1) { and \ \  $L=\langle \rho \in \Omega_2 | \rho \not= \lambda \alpha \rangle$.};
		\draw [->] (a) -- node[above] { $\alpha$} (c);
		\draw [->] (b) -- node[above] { $\beta$} (c);
		\draw [->] (c) -- node[above] { $\gamma$} (d);
		\draw [->] (c) -- node[above] { $\delta$} (e);
		\draw [->] (x) -- node[above] { $\lambda$} (a);
		\end{tikzpicture}
	\end{tabular}
\end{flushleft}
In the cases (1) and (2),  we have that $A$ is derived wild by Lemma \ref{lemma1}, a contradiction. If $\tilde{A}\cong K \Omega/ L$, then since $s(\lambda) \in \Omega_0$ is a source and there is no relation $\rho \in \{\rho \in \Omega_2 | \rho \not= \lambda \alpha\}$ such that $s(\rho) = s(\lambda)$, so $\tilde{A}$ is derived equivalent to $\mu_{s(\lambda)}^+(K \Omega/ L) \cong K \Delta/ J$, thus $A$ is derived wild by Lemma \ref{lemma1},  a contradiction. Therefore  $s(\lambda)\not= s(\alpha)$ and $t(\lambda)\not= s(\alpha)$. 

Similarly we have that $s(\lambda), t(\lambda) \not\in\{s(\beta), t(\gamma), t(\delta)\}$, a contradiction. Thus $Q_1=\{\alpha, \beta, \gamma, \delta\}$.     
\end{proof}

\begin{prop}\label{prop2}
Let $A=KQ/I$ be a derived tame quadratic string algebra where $Q$ is not a tree. If $x \in Q_0$ is not gentle vertex, then one of the following conditions holds:
\begin{itemize}
	\item[(1)] There are $\alpha,\beta,\gamma \in Q_1 $ such that $t(\alpha)=t(\beta)=s(\gamma)=x$; $s(\alpha), s(\beta), x$ and $ t(\gamma)$ are pairwise distinct; $x^+=\{\gamma\}$;  $\alpha\gamma, \beta\gamma \in I$.
	\item[(2)] There are $\alpha, \gamma, \delta \in Q_1$ such that $t(\alpha)=s(\gamma)=s(\delta)=x$; $s(\alpha), x, t(\gamma)$ and $ t(\delta)$ are pairwise distinct; $x^-=\{\alpha\}$; $\alpha\gamma, \alpha\delta \in I$.
	\item [(3)] There are $\alpha, \beta, \gamma, \delta \in Q_1$ such that $t(\alpha)=t(\beta)=s(\gamma)=s(\delta)=x$; $s(\alpha), s(\beta), x, t(\gamma)$ and $ t(\delta)$ are pairwise distinct; $\alpha\gamma, \alpha\delta, \beta\gamma \in I$; $\beta\delta \notin I$.
	\item[(4)] There are $\alpha, \beta, \gamma, \delta \in Q_1$ such that $t(\alpha)=t(\beta)=s(\gamma)=s(\delta)=x$; $s(\beta)=t(\delta)$; $s(\alpha), s(\beta), x$ and $ t(\gamma)$ are pairwise distinct; $\alpha\gamma, \alpha\delta, \beta\gamma, \delta\beta \in I$; $\beta\delta \notin I$.
\end{itemize}
\end{prop}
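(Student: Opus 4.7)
The plan is to handle the two dual failures of the gentle condition at $x$ in parallel. Since $x$ is non-gentle, one of the following holds: either there is an arrow $\gamma$ out of $x$ together with two distinct arrows $\alpha,\beta$ into $x$ with $\alpha\gamma,\beta\gamma\in I$ (call this case A), or the dual configuration of one arrow into $x$ and two arrows out of $x$ occurs (case B). I will carry out case A; case B is analogous and will produce conclusions (2), (3) or (4).

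First I invoke Lemma~\ref{lemma6} on $(\alpha,\beta,\gamma)$ to get that the three arrows are distinct and $s(\alpha), s(\beta), x, t(\gamma)$ are pairwise distinct. I then split on $|x^+|$. If $x^+=\{\gamma\}$, the data of case (1) is already in place. Otherwise $x^+=\{\gamma,\delta\}$ for some $\delta\neq\gamma$, and condition (ii) of the definition of quadratic string algebra, applied to $\delta$, forces at most one of $\alpha\delta,\beta\delta$ to lie outside $I$. If both $\alpha\delta$ and $\beta\delta$ are in $I$, Lemma~\ref{lemma7} forces $Q$ to be a specific five-vertex tree, contradicting the non-tree hypothesis. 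So, after possibly relabeling $\alpha$ and $\beta$, I may assume $\alpha\delta\in I$ and $\beta\delta\notin I$. A second, dual, application of Lemma~\ref{lemma6} to $(\alpha,\gamma,\delta)$ now supplies the distinctness of $s(\alpha), x, t(\gamma), t(\delta)$.

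At this point the only pair of vertices whose distinctness has not yet been forced is $\{s(\beta),t(\delta)\}$. If $s(\beta)\neq t(\delta)$, then $s(\alpha), s(\beta), x, t(\gamma), t(\delta)$ are pairwise distinct and I am in case (3). If $s(\beta)=t(\delta)$, the remaining task is to prove $\delta\beta\in I$ in order to land in case (4), and this is the step I expect to be the heart of the argument. The key idea is not to invoke Galois coverings or mutations but to use admissibility directly: suppose for contradiction that $\delta\beta\notin I$. Then for every $n\geq 1$ the power $(\delta\beta)^n$ is a nonzero path from $x$ to $x$ whose only length-two subpaths are $\delta\beta$ and $\beta\delta$, and neither of these lies in $I$ by assumption. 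Since $I$ is generated by paths of length two, no length-two factor of $(\delta\beta)^n$ is a generator of $I$, so $(\delta\beta)^n\notin I$ for all $n$. This contradicts admissibility of $I$, which requires all sufficiently long paths through $x$ to belong to $I$. Hence $\delta\beta\in I$ and case (4) is complete.

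Finally, case B is handled symmetrically by applying the dual forms of Lemmas~\ref{lemma6} and~\ref{lemma7} and repeating the admissibility argument verbatim; this yields conclusion (2) when $|x^-|=1$ and conclusions (3) or (4) otherwise.
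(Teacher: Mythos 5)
Your proposal is correct and follows essentially the same route as the paper: the same split into the two dual non-gentle situations, Lemma~\ref{lemma6} (in both its forms) for the distinctness of vertices, the quadratic string axiom to force at least one of $\alpha\delta,\beta\delta$ into $I$, and Lemma~\ref{lemma7} together with the non-tree hypothesis to exclude both being in $I$. The step you single out as the heart of the argument, $\delta\beta\in I$ when $s(\beta)=t(\delta)$, is exactly the paper's (more tersely stated) justification that $A$ is finite dimensional and $I$ is generated by paths of length two; your cycle $(\delta\beta)^n$ argument just spells that remark out.
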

\begin{proof}
Since $x$ is not a gentle vertex, then at least one of the following situations holds:
\begin{itemize}
	\item[(i)] There are $\alpha, \beta, \gamma \in Q_1$, $\alpha \not=\beta$, such that $t(\alpha)=t(\beta)=s(\gamma)=x$ and 
	$\alpha\gamma, \beta\gamma\in I$. 
	\item[(ii)] There are $\alpha, \gamma, \delta \in Q_1$, $\gamma \not=\delta$ such that $s(\alpha)=t(\gamma)=t(\delta)=x$ and $\alpha\gamma, \alpha\delta\in I$.
\end{itemize}     

Suppose that the situation (i) holds. By Lemma \ref{lemma6} we have that $\alpha\not=\gamma, \beta\not=\gamma$ and the vertices $s(\alpha), s(\beta), x$ and $ t(\gamma)$ are pairwise distinct. If $x^+=\{\gamma\}$, then the condition $(1)$ holds. In the other hand, if there is an arrow $\delta\not=\gamma$ such that $s(\delta)=x$, then we must have  $\alpha\delta \in I$ or $\beta\delta\in I$. But, since $Q$ is not a tree, follows from Lemma \ref{lemma7} that we cannot have $\alpha\delta, \beta\delta\in I$. Hence we can assume, without loss of generality, that $\alpha\delta\in I$ but $\beta\delta \notin I$,  thus $t(\delta)\not=t(\gamma)$ and $t(\delta)\not=s(\alpha)$ by Lemma \ref{lemma6}. Moreover, if $t(\delta)\not=s(\beta)$, then the condition $(3)$ holds. But if $s(\beta)=t(\delta)$, then $\delta\beta \in I$ because $A$ has finite dimensional and $I$ is generated by paths of length 2. Therefore  the condition $(4)$ holds. 

Finally, a similar analysis to the case (ii) finish the proof. 
\end{proof}

\begin{lemma}\label{lemma8} Let $A=KQ/\mathcal{R}_Q^2$ be an algebra where

			\begin{center}
				\begin{tikzpicture}
				\node  (q) at (-0.5, 1) {$Q$:};
				\node  (10) at (0, 1) {$10$};
				\node  (9) at (1, 1) {$9$};
				\node  (8) at (2, 1) {$8$};
				\node  (7) at (3, 1) {$7$};
				\node  (6) at (4, 1) {$6$};
				\node  (1) at (5, 1) {$1$};
				\node  (2) at (5, -1){$2$};
				\node  (3) at (6, 0) {$3$};
				\node  (4) at (7, 1) {$4$};
				\node  (5) at (7, -1){$5$};
				\draw [-] (10) -- node[above] { $\omega_5$} (9);
				\draw [-] (9) -- node[above] { $\omega_4$} (8);
				\draw [-] (8) -- node[above] { $\omega_3$} (7);
				\draw [-] (7) -- node[above] { $\omega_2$} (6);
				\draw [-] (6) -- node[above] { $\omega_1$} (1);
				\draw [->] (1) -- node[above] { $\alpha$} (3);
				\draw [->] (2) -- node[above] { $\beta$} (3);
				\draw [->] (3) -- node[above] { $\gamma$} (4);
				\draw [->] (3) -- node[above] { $\delta$} (5);
				\end{tikzpicture}
			\end{center}
and the arrows without orientation can be oriented in either way. Then $A$ is derived wild.
\end{lemma}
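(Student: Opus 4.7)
My plan is to apply Theorem \ref{rad} directly. Since $A = KQ/\mathcal{R}_Q^2$ has radical squared zero, its derived representation type depends only on the underlying graph (i.e.\ the type) of $Q$, so the ambiguity of orientation of the $\omega_i$ is immaterial.

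First I will describe the type of $Q$: it is a tree on $10$ vertices in which vertex $3$ has degree four, being adjacent to $1, 2, 4, 5$; the leaves are $2, 4, 5, 10$; and vertices $1, 6, 7, 8, 9$ form the interior of an attached arm of length five ending at the leaf $10$. The key observation is that this tree strictly contains the Euclidean diagram $\tilde{D}_4$ as a full subgraph, namely the star on $\{3, 1, 2, 4, 5\}$. Since no (simply-laced) Dynkin diagram has a vertex of degree $\geq 4$, and the only (simply-laced) Euclidean diagram with a vertex of degree four is $\tilde{D}_4$ itself (which has only $5$ vertices), while the type of $Q$ has $10$ vertices, the type of $Q$ is neither Dynkin nor Euclidean.

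Therefore Theorem \ref{rad} immediately yields that $A$ is derived wild, uniformly in the chosen orientation of the $\omega_i$. There is no real obstacle beyond this short graph-theoretic check; one could alternatively deduce the same conclusion from Lemma \ref{lemma1}(b) by exhibiting $Q$ (or even just the $\tilde{D}_4$-subgraph together with the extended arm) as a full bounded subcategory of a Galois covering, but this detour is unnecessary since $A$ is already itself a finite-dimensional radical-squared-zero algebra whose type fails to be Dynkin or Euclidean.
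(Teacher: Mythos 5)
Your argument is correct for the lemma exactly as stated, but it is a genuinely different route from the paper's. Since $A=KQ/\mathcal{R}_Q^2$ is radical square zero and the underlying graph of $Q$ is a tree on ten vertices with a vertex of degree four (the vertex $3$), it is neither Dynkin (maximal degree $3$) nor Euclidean (the only Euclidean graph with a degree-four vertex is $\tilde{D}_4$, on five vertices), so Theorem \ref{rad} applies directly and, together with the derived tame--wild dichotomy, gives derived wildness independently of the orientation of the $\omega_i$; this implicit use of the dichotomy is the same reading of Theorem \ref{rad} that the paper itself employs, e.g.\ in the proof of Lemma \ref{lemma1}(b), so your one-line deduction is consistent with the paper's conventions. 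The paper instead constructs an explicit tilting complex $T$, identifies $(\mathrm{End}\,T)^{op}$ with $K\Delta/J$, performs two negative mutations, and only then invokes Theorem \ref{rad} on a radical-square-zero algebra over a tree of maximal degree three. What the longer route buys is insensitivity to the precise relation set at the branch vertex: where Lemma \ref{lemma8} is actually invoked, in the proof of Lemma \ref{lemma9}, the ideal is $\langle\tilde{Q}_2\setminus\{\beta\delta\}\rangle$ rather than the full $\mathcal{R}^2$, and for that algebra Theorem \ref{rad} cannot be applied directly; the tilting/mutation computation is what handles that case (indeed the quiver $\Delta$ computed in the paper, with an arrow $\beta^\ast:2\to 4$ and no arrow between $2$ and $5$ or between $2$ and $3$, is the endomorphism quiver one gets when $\beta\delta\notin I$, not when all paths of length two vanish). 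So your proof settles the statement as written more economically, but it would not cover the variant of the lemma that the paper actually needs later; that discrepancy lies in the paper's statement/usage, not in your argument.
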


\begin{proof} We define a bounded complex $T:=\bigoplus_{i=1}^{10}T^i$ of projective $A$-modules. For $i \in Q_0 \backslash \{3,4,5\}$, let $T^i:\xymatrix{ 0\ar[r]&  P_i\ar[r]& 0}$ be the stalk complex concentrated in degree $0$. Moreover, consider the following complexes
$$ \xymatrix{T^3:\cdots \ \ 0\ar[r] & P_4\oplus P_5 \ar[r]^-{\left[\gamma \,\, \delta \right]}&  P_3\ar[r]& 0 \ \ \cdots,\\
		T^4:\cdots \ \ 0\ar[r] & P_4 \ar[r]^\gamma&  P_3\ar[r]& 0 \ \ \cdots,\\
		T^5:\cdots \ \ 0\ar[r] & P_5\ar[r]^\delta&  P_3\ar[r]& 0 \ \ \cdots, } $$ 
concentrated in degree $0$ and $-1$. It is straightforward to check that  $T$ is a tilting complex and that $(\mathrm{End}(T))^{op}\cong K\Delta/J$  where
	\begin{center}
		\begin{tikzpicture}
		\node  (q) at (-0.5,0) {$\Delta$:};
		\node  (10) at (0, 0) {$10$};
		\node  (9) at (1, 0) {$9$};
		\node  (8) at (2, 0) {$8$};
		\node  (7) at (3, 0) {$7$};
		\node  (6) at (4, 0) {$6$};
		\node  (1) at (5, 0) {$1$};
		\node  (3) at (6, 0){$3$};
		\node  (4) at (7, 0) {$4$};
		\node  (2) at (8, 0) {$2$};
		\node  (5) at (6, -1){$5$};
		\draw [-] (10) -- node[above] { $\omega_5$} (9);
		\draw [-] (9) -- node[above] { $\omega_4$} (8);
		\draw [-] (8) -- node[above] { $\omega_3$} (7);
		\draw [-] (7) -- node[above] { $\omega_2$} (6);
		\draw [-] (6) -- node[above] { $\omega_1$} (1);
		\draw [->] (1) -- node[above] { $\alpha^\ast$} (3);
		\draw [->] (3) -- node[above] { $\gamma^\ast$} (4);
		\draw [->] (3) -- node[right] { $\delta^\ast$} (5);
		\draw [->] (2) -- node[above] { $\beta^\ast$} (4); 	
		\end{tikzpicture}
	\end{center}
and  $J= \langle \Delta_2\backslash \{\alpha^\ast\gamma^\ast,\alpha^\ast\delta^\ast\} \rangle$. 

Since the vertices $4, 5 \in \Delta_0$ are sinks and there is no relation $\rho \in \Delta_2\backslash \{  \alpha^\ast\gamma^\ast,\alpha^\ast\delta^\ast\} $ such that $t(\rho) \in \{4,5\}$,it  follows that  $K\Delta/J$ is derived equivalent to \linebreak$\mu_5^-(\mu_{4}^-(K\Delta/J))\cong K\Gamma/\mathcal{R}_\Gamma^2$ where $\Gamma$ is the following quiver:
	
\begin{center}
	\begin{tikzpicture}
	\node  (q) at (-0.5, 0) {$\Gamma$:};
	\node  (10) at (0, 0) {$10$};
	\node  (9) at (1, 0) {$9$};
	\node  (8) at (2, 0) {$8$};
	\node  (7) at (3, 0) {$7$};
	\node  (6) at (4, 0) {$6$};
	\node  (1) at (5, 0) {$1$};
	\node  (3) at (6, 0){$3$};
	\node  (4) at (7, 0) {$4$};
	\node  (2) at (8, 0) {$2$\ .};
	\node  (5) at (6, -1){$5$};
	\draw [-] (10) -- node[above] { $\omega_5$} (9);
	\draw [-] (9) -- node[above] { $\omega_4$} (8);
	\draw [-] (8) -- node[above] { $\omega_3$} (7);
	\draw [-] (7) -- node[above] { $\omega_2$} (6);
	\draw [-] (6) -- node[above] { $\omega_1$} (1);
	\draw [->] (1) -- node[above] { $\alpha^\ast$} (3);
	\draw [->] (4) -- node[above] { $\tilde{\gamma}$} (3);
	\draw [->] (5) -- node[right] { $\tilde{\delta}$} (3);
	\draw [->] (4) -- node[above] { $\tilde{\beta}$} (2); 	
	\end{tikzpicture}
\end{center}
Note that $\mu_5^-(\mu_{4}^-(K\Delta/J))$ is derived wild by Theorem \ref{rad}. Therefore $A$ is derived wild also.
\end{proof}

\begin{lemma}\label{lemma}
Let $Q$ be a connected and locally finite tree with an infinite number of vertices. For all $x \in Q_0$ and $n \geq 1$ there is a reduced walk $w$ such that $s(w)=x$ and $l(w)=n$.
\end{lemma}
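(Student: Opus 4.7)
The plan is to reduce the claim to a statement about the underlying undirected graph of $Q$, which I will denote $\overline{Q}$: since $Q$ is a tree, $\overline{Q}$ is an undirected tree in the graph-theoretic sense. The first observation I would make is that a reduced walk in $Q$ (as defined in the paper) corresponds exactly to a non-backtracking edge sequence in $\overline{Q}$, and because $\overline{Q}$ is a tree any non-backtracking edge sequence visits each vertex at most once. Consequently, producing a reduced walk of length $n$ starting at $x$ is equivalent to exhibiting a vertex $y\in Q_0$ whose graph-distance $d(x,y)$ in $\overline{Q}$ equals $n$: given such $y$, the unique geodesic from $x$ to $y$ lifts to a reduced walk in $Q$ of length $n$ by choosing for each traversed edge the arrow in $Q_1$ (or its formal inverse) realizing it.

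Next I would analyse the spheres $S_k=\{y\in Q_0 \mid d(x,y)=k\}$. By induction on $k$, using local finiteness of $Q$, each $S_k$ is finite: $S_0=\{x\}$ is finite, and every vertex in $S_{k+1}$ is a neighbour of some vertex in $S_k$, so $|S_{k+1}|\leq \sum_{y\in S_k}(|y^+|+|y^-|)<\infty$. Since $Q$ is connected, every vertex lies in some $S_k$, so $Q_0=\bigsqcup_{k\geq 0} S_k$. If the set $\{k : S_k\neq\emptyset\}$ were bounded by some $N$, the union would be a finite union of finite sets, contradicting the assumption that $Q_0$ is infinite. Hence $S_k$ is nonempty for arbitrarily large $k$.

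The final step is to promote this to: $S_n\neq\emptyset$ for every $n\geq 1$. I would pick any $k\geq n$ with $S_k$ nonempty and any $y\in S_k$; then the unique geodesic from $x$ to $y$ in $\overline{Q}$ passes through one vertex in each of $S_0,S_1,\dots,S_k$, giving in particular a vertex $z\in S_n$. The reduced walk from $x$ to $z$ obtained by lifting this initial segment has length $n$ and source $x$, as desired.

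I do not expect any serious obstacle here; the only point that requires care is the equivalence between reduced walks in the quiver $Q$ and non-backtracking edge paths in $\overline{Q}$, so that being a tree (no cycle in the walk sense) really forces the absence of graph-theoretic cycles and hence the uniqueness of geodesics used in the last step. Local finiteness is used essentially to ensure the $S_k$ are finite, and the assumption of infinitely many vertices is precisely what rules out the walk-lengths being bounded.
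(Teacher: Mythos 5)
Your argument is correct and is essentially the paper's proof in different clothing: the paper works with the level sets $U_k$ of endpoints of reduced walks of length $k$ from $x$ (which, since $Q$ is a tree, coincide with your distance spheres $S_k$), proves each is finite by induction using local finiteness, and concludes from the infinitude of $Q_0$ that all levels are nonempty. Your final truncation-of-a-geodesic step even makes explicit a point the paper only asserts (that emptiness of one level forces emptiness of all later levels), so no changes are needed.
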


\begin{proof} Fix $x \in Q_0$. For every $k \geq 0$ we define the set:	$$U_k=\{y\in Q_0 | \exists \,  w: x \rightarrow y \   \mbox{a reduced walk }, \,  l(w)=k \}.$$
Since $Q$ is a connected tree, then for every vertex $y \in Q_0$ there is only one reduced walk $w:x \rightarrow y$. In particular, $Q_0= \bigcup_{k\geq 0} U_k$ and $U_k \cap U_j=\emptyset$, if $k \not=j$.\\
	
\noindent \underline{Statement:} $U_k$ is a finite set for all $k\geq 0$.
	
We use induction on $k$. Since $U_0=\{e_x\}$, the statement holds for $k=0$. Let $k \geq 1$ and suppose that  $U_{k-1}$ is finite.  For every $y \in Q_0$ we define:
	\[V_y=\{z \in Q_0 | \exists \, \alpha \in Q_1, \alpha:y \rightarrow z \mbox{ or } \alpha: z \rightarrow y \}.
	\] 
It is clear that if $z \in U_{k}$, then $z \in V_y$ for some $y \in U_{k-1}$. Thus $U_{k} \subseteq \bigcup_{y \in U_{k-1}}V_y $. Moreover, since $Q$ is locally finite then $V_y$ is finite for all $y \in Q_0$. Hence $\bigcup_{y \in U_{k-1}}V_y$ and $U_{k} $ are finite sets. This proves the statement. \\
	
Now, suppose there is $k_0 \geq 0$ such that $U_{k_0}= \emptyset$, so $U_k = \emptyset $ if $k \geq k_0$. Since $Q_0 = \bigcup_{k\geq 0} U_k= \bigcup_{0 \leq i <  k_0 } U_i$ is an infinite set, then there is  $0 \leq k < k_0$ such that $U_k$ is an infinite set, a contradiction. Hence $U_k \not= \emptyset $ for any $k \geq 0$.
	
Finally, given $n\geq 1$, take $y \in U_n$. Thus, there is a reduced walk $w: x \rightarrow y$ with $l(w)=n$. 
\end{proof}

\begin{lemma}\label{lema}
Let $Q$ be a connected and locally finite tree with infinite vertices, and let $I$ be an admissible ideal of $KQ$. Suppose that there is $m \geq 2$ such that $\mathcal{R}_Q^m \subseteq I $. Then for all vertex $x$ of $Q$ and $n \geq 1$, there is a set of vertices \linebreak$\{x=x_0, x_1, \cdots, x_n \} \subset Q_0$ such that they define a full subcategory $B$ of $A= K Q/I$, such that $B\cong K \Gamma/\mathcal{R}_{\Gamma}^2$ where the underlying graph of $\Gamma$ has the following shape: 
	\[\overline{\Gamma}: \xymatrix{ x_0 \ar@{-}[r] & x_1 \ar@{-}[r] & \cdots \ar@{-}[r] & x_{n-1}  \ar@{-}[r] & x_n.  } \]
\end{lemma}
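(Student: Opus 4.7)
The idea is to construct $\{x_0, x_1, \ldots, x_n\}$ by selecting vertices along a sufficiently long reduced walk starting at $x$ (furnished by Lemma \ref{lemma}), at positions chosen so that morphisms between consecutive vertices in $B$ are one-dimensional while morphisms between non-consecutive vertices vanish. Non-vanishing comes from the survival of arrows and short directed paths in $A$; vanishing uses $\mathcal{R}_Q^m \subseteq I$ together with the fact that in a tree there is at most one directed path between any two vertices.

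Set $\ell := \lceil m/2\rceil$, so that $1 \leq \ell \leq m-1$ and $2\ell \geq m$. By Lemma \ref{lemma} and K\"onig's Lemma applied to the infinite locally finite tree $Q$, one obtains an infinite reduced walk $w = \alpha_1^{\epsilon_1}\alpha_2^{\epsilon_2}\cdots$ starting at $x$, with vertex sequence $y_0 = x, y_1, y_2, \ldots$, which can be decomposed into maximal directed blocks of lengths $L_1, L_2, \ldots$. The argument then splits into two cases according to whether some reduced walk from $x$ begins with a long directed segment or whether direction changes must be frequent.

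In the first case, arrange $w$ so that its first $n\ell$ arrows form a single directed path $x = y_0 \to y_1 \to \cdots \to y_{n\ell}$ (or its reverse). Set $x_k := y_{k\ell}$ for $k=0,\dots,n$. Since $Q$ is a tree, the unique directed path in $Q$ between $x_i$ and $x_j$ (in the walk direction) has length $(j-i)\ell$: for $j=i+1$ this equals $\ell \leq m-1$, so the path is nonzero in $A$ and contributes a one-dimensional morphism space; for $j \geq i+2$ it has length at least $2\ell \geq m$, so the path lies in $\mathcal{R}_Q^m \subseteq I$ and the corresponding morphism space vanishes. All reverse-direction morphism spaces are automatically zero. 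In the second case, $w$ has infinitely many direction changes and hence infinitely many blocks, each of length less than $n\ell$. Pick $x_0 := x$ and iteratively let $x_{k+1}$ be the endpoint of the next maximal directed block after $x_k$ when that block has length at most $m-1$; when it has length at least $m$, subdivide it into sub-segments of length $\ell$ and use the resulting intermediate vertices as additional $x_j$. Then each consecutive pair $x_k, x_{k+1}$ is joined by a directed sub-walk of length in $[1,m-1]$ (a one-dimensional morphism in $A$), while any non-consecutive pair is separated either by a direction change in $w$ (whence no directed path in $Q$ connects them in either direction) or by a directed sub-walk of length at least $2\ell \geq m$ (whence the path lies in $I$).

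The main obstacle is the bookkeeping in the second case: one must check that the iterative procedure always yields $n+1$ distinct vertices, that subdividing long blocks does not accidentally create nonzero length-two compositions, and that crossing a block boundary does produce zero morphisms for non-consecutive vertices (using uniqueness of directed paths in the tree together with $2\ell \geq m$). Once $x_0,\ldots,x_n$ are fixed, the above case analysis shows that the morphism spaces of the full subcategory $B$ coincide with those of $K\Gamma/\mathcal{R}_{\Gamma}^2$ for the line quiver $\Gamma$ on these vertices (oriented as dictated by $w$), and the composition rule inherited from $A$ agrees with that of $K\Gamma/\mathcal{R}_{\Gamma}^2$ since every length-two composition in $B$ either lies in $I$ or fails to compose as a directed path in $Q$. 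This gives the required isomorphism.
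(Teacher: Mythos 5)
There is a genuine gap, and it is the central nonvanishing claim. You assume throughout that a directed sub-path of length at most $m-1$ ``survives'' in $A$, i.e.\ is nonzero modulo $I$. But the hypothesis is only $\mathcal{R}_Q^m \subseteq I$; the admissible ideal $I$ may well contain many paths of length strictly less than $m$ (in the paper's applications $I$ is generated by paths of length two, and $m$ is just some exponent with $\mathcal{R}_Q^m\subseteq I$). Consequently your equally spaced choice $x_k = y_{k\ell}$ with $\ell=\lceil m/2\rceil$ can produce $B(x_k,x_{k+1})=0$: for instance on a linear quiver with $m=3$, $\ell=2$, and relations killing exactly the length-two paths starting at every other vertex, the connecting path of length $2$ from $x_0$ to $x_1$ may lie in $I$. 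Then $B$ is not isomorphic to $K\Gamma/\mathcal{R}_\Gamma^2$ for the claimed line quiver. The same unproved assumption is used again in your Case 2 (``a directed sub-walk of length in $[1,m-1]$ is a one-dimensional morphism in $A$''), and the bookkeeping problem you yourself flag there is real: subdividing a long block into pieces of length $\ell$ leaves adjacent pieces (e.g.\ a remainder piece) whose composite has length $<m$, hence possibly nonzero, which destroys the radical-square-zero shape. There is also a smaller logical slip: your dichotomy (``begins with a long directed segment'' versus ``infinitely many direction changes, each block of length less than $n\ell$'') is neither exhaustive nor does the second alternative follow, and since $x_0$ must equal the given vertex $x$ you cannot freely ``arrange'' the walk to start inside a long directed run.

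The paper avoids all of this with a greedy choice instead of equal spacing: take (from Lemma \ref{lemma}) a reduced walk of length $mn$ starting at $x$, set $l_0=0$, and let $l_{i+1}$ be the \emph{largest} index such that the arrows between positions $l_i$ and $l_{i+1}$ form a directed path that does \emph{not} lie in $I$. Nonvanishing of each connecting morphism is then automatic by construction (and each step advances, since single arrows are not in the admissible ideal $I$), while $\mathcal{R}_Q^m\subseteq I$ bounds each step by $m-1$, so $l_n<mn$ and $n+1$ vertices fit on the walk; maximality forces every longer directed path, in particular every composite $w_iw_{i+1}$ or $w_{i+1}w_i$ and every directed path between non-consecutive chosen vertices (unique, because $Q$ is a tree), to be undefined or to lie in $I$. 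Your argument would need to be repaired along these lines (choose the next vertex by maximal nonzero directed path, not by a fixed stride $\ell$) before the claimed isomorphism $B\cong K\Gamma/\mathcal{R}_\Gamma^2$ holds.
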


\begin{proof}
Given $x \in Q_0$ and $n \geq 1$, then by Lemma \ref{lemma} there is a reduced walk $w$ such that  $s(w)=x$ and $l(w)=mn$. Let $\{y_0=x, y_1, \cdots, y_{mn} \} \subset Q_0$ be the the set of vertices witch occur on $w$, thus we can write   $w= \alpha_0^{\epsilon_{0}}\alpha_1^{\epsilon_{1}}\cdots \alpha_{nm-1}^{\epsilon_{nm-1}}$, where $\alpha_i \in Q_1$, $\epsilon_{i} \in \{1, -1\}$,  $s(\alpha_{i}^{\epsilon_{i}})=y_i$ and $t(\alpha_{i}^{\epsilon_{i}})=y_{i+1}$ for all $0\leq i \leq nm-1$.
	\[w= \xymatrix{y_0\ar@{-}[r]^{\alpha_0} & y_1 \ar@{-}[r]^{\alpha_1} & y_2\ar@{-}[r]^{\alpha_2} & \cdots \ar@{-}[r]^{\alpha_{nm-2}} & y_{nm-1} \ar@{-}[r]^{\alpha_{nm-1}} & y_{nm}}\] 
	
Putting $l_0=0$, we define $l_{i+1}$ as follows:
\begin{itemize}
\item If $\epsilon_{l_i}=1$, then $\alpha_{l_i}:y_{l_i} \rightarrow y_{l_{i}+1} $. In this case, $l_{i+1}$ is the bigger integer such that the path $\alpha_{l_i}\alpha_{l_i+1} \cdots \alpha_{l_{i+1}-1}:y_{l_i}\rightarrow y_{l_{i+1}}$ is defined and it does not belong to $I$; 
\item If $\epsilon_{l_i}=-1$, then $\alpha_{l_i}:y_{l_{i}+1} \rightarrow y_{l_{i}} $. In this case, $l_{i+1}$ is the bigger integer such that the path $\alpha_{l_{i+1}-1}\cdots\alpha_{l_i+1} \alpha_{l_{i}}:y_{l_{i+1}}\rightarrow y_{l_{i}}$ is defined and it does not belong to $I$. 
\end{itemize}
	
Since every path of $Q$ whose length is greater than  $m-1$ belongs to $I$, then for any $i$ we have $l_{i+1}-l_i< m$. Thus we have:
	\begin{eqnarray*}
		l_n & = & l_n + (-l_{n-1} + l_{n-1}) + (-l_{n-2}+ l_{n-2}) +  \cdots + (-l_1 + l_1) + (-l_0 + l_0)\\
		& = & (l_n - l_{n-1}) + (l_{n-1} - l_{n-2}) + \cdots + (l_1 - l_0) + l_0\\
		& < & nm. 
	\end{eqnarray*}
Hence we can define $x_i:= y_{l_i}$ for all $0\leq i \leq n $. Moreover, for $0 \leq i \leq n-1$ we define the path $w_i$ as follows:
\[\left\{ \begin{array}{cl} w_i= \alpha_{l_i} \cdots \alpha_{l_{i+1}-1}: x_i \rightarrow x_{i+1}, & \mbox{ if } \epsilon_{l_i}=1;\\
w_i= \alpha_{l_{i+1}-1} \cdots \alpha_{l_{i}}: x_{i+1} \rightarrow x_{i}, & \mbox{ if } \epsilon_{l_i}=-1.
\end{array}
\right.\]
Note that for any $0 \leq i \leq n-1$, the paths $w_iw_{i+1}$ and $w_{i+1}w_i$ are not defined or they belong to $I$. 
	
Let $B$ be the full subcategory of $A$ defined by $\{x_0, \cdots, x_n\}$. Hence $B\cong K \Gamma/ \mathcal{R}_{\Gamma}^2$ where the underline graph of $\Gamma$ is given by
	\[\overline{\Gamma}: \xymatrix{ x=x_0 \ar@{-}[r]^{w_0} & x_1 \ar@{-}[r]^{w_1} & \cdots \ar@{-}[r]^{w_{n-2}} & x_{n-1}  \ar@{-}[r]^{w_{n-1}} & x_n.  } \] 
\end{proof}

\begin{lemma}\label{lemma13}
Let $A=K Q/I$ be a derived tame monomial algebra. Suppose that the Galois covering $K\widehat{Q}/\widehat{I}$ of $A$ has a full subcategory $ K\tilde{Q}/ \tilde{I}$ where $\tilde{Q}$ has the following shape
	\begin{center}
		\begin{tikzpicture}
		\node  (x) at (1, 1) {$\tilde{Q}$:};
		\node  (a) at (2.5, 1) {$\bullet$};
		\node  (b) at (2.5, -1) {$\bullet$};
		\node  (c) at (4, 0) {$\bullet$};
		\node  (d) at (5, 0) {$\bullet$};
		\draw [->] (a) -- node[above] { $\alpha$}  (c);
		\draw [->] (b) -- node[above] { $\beta$} (c) ;
		\draw [->] (c) -- node[above] { $\gamma$} (d);
		\draw[dashed] (2,-1) node {$\tilde{Q}^{(2)}$} (2,-1) circle (0.5);
		\draw[dashed] (2,1) node {$\tilde{Q}^{(1)}$} (2,1) circle (0.5);
		\draw[dashed] (5.5,0) node {$\tilde{Q}^{(3)}$} (5.5,0) circle (0.5);
		\end{tikzpicture}
	\end{center}
and for $i=1, 2, 3$, $\tilde{Q}^{(i)}$ is a connected full subquiver of $\widehat{Q}$ such that $\tilde{Q}_0^{(i)} \cap \tilde{Q}_0^{(j)}=\emptyset$ if $i\not=j$.
	\begin{itemize}
		\item[(a)] If $\tilde{Q}^{(1)}$ has an infinite number of vertices, then  $s(\beta)^+=\{\beta\}$, $s(\beta)^-=\emptyset$, $t(\gamma)^-=\{\gamma\}$ and $t(\gamma)^+=\emptyset$. 
		\item[(b)] If $\tilde{Q}^{(3)}$ has an infinite number of vertices, then  $s(\alpha)^+=\{\alpha\}$, $s(\alpha)^-=\emptyset$, $s(\beta)^+=\{\beta\}$ and $s(\beta)^-=\emptyset$.
	\end{itemize}
\end{lemma}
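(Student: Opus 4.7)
The plan is a proof by contradiction, treating parts (a) and (b) symmetrically. I focus on (a): the idea is that the infinite subtree $\tilde Q^{(1)}$ supplies arbitrarily long rad-squared-zero chains in the Galois cover starting at $s(\alpha)$, and any ``extra'' arrow at $s(\beta)$ or $t(\gamma)$ glues onto such a chain to produce a full subcategory of $\widehat A$ whose underlying graph is a tree with three branches of lengths $n$, $2$ and $1$. Such a tree $T_{n,2,1}$ is neither Dynkin nor Euclidean as soon as $n\geq 6$, so Lemma \ref{lemma1}(b) then forces $A$ to be derived wild, contradicting the hypothesis.

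More concretely, since $A$ is monomial, $\widehat Q$ is a connected locally finite tree and $\widehat I$ is generated by paths; choosing $m\geq 2$ with $\mathcal R_Q^m\subseteq I$ yields $\mathcal R_{\tilde Q^{(1)}}^m\subseteq\widehat I\cap K\tilde Q^{(1)}$. Lemma \ref{lema} applied to $\tilde Q^{(1)}$ with base vertex $s(\alpha)$ produces, for every $n\geq 1$, vertices $\{y_0=s(\alpha),y_1,\dots,y_n\}\subset\tilde Q^{(1)}_0$ defining a full subcategory of $\widehat A$ isomorphic to $K\Gamma_n/\mathcal R_{\Gamma_n}^2$ with $\overline{\Gamma_n}$ a path on $n+1$ vertices. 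If any condition of (a) fails, an extra arrow of $\widehat Q$ incident to $s(\beta)$ or $t(\gamma)$ supplies a new vertex $z$, distinct from all previously named vertices because $\widehat Q$ is a tree. The full subcategory $\widehat B_n$ of $\widehat A$ on $\{y_0,\dots,y_n,c,s(\beta),t(\gamma),z\}$ then has an arrow quiver whose underlying graph is $T_{n,2,1}$, the new vertex $z$ lengthening the appropriate short branch to length~$2$.

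When all two-step compositions of arrows of $\widehat B_n$ lie in $\widehat I$ — in particular $\alpha\gamma,\beta\gamma\in I$ and the composition involving the extra arrow lies in $I$ — the subcategory $\widehat B_n$ coincides with $K\Xi/\mathcal R_\Xi^2$ where $\overline{\Xi}=T_{n,2,1}$. Since $\tfrac{1}{n+1}+\tfrac{1}{3}+\tfrac{1}{2}<1$ for $n\geq 6$, this graph is neither Dynkin nor Euclidean, and Lemma \ref{lemma1}(b) yields that $A$ is derived wild, the desired contradiction. Part (b) is proved by the identical argument after swapping the roles of $(s(\alpha),\alpha,\tilde Q^{(1)})$ with $(t(\gamma),\gamma,\tilde Q^{(3)})$, using $\gamma$ to attach the long tail at $t(\gamma)$.

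The main technical obstacle is the residual case in which some of the length-$2$ compositions at the branching vertex $c$ — most notably $\alpha\gamma$ or $\beta\gamma$ — fail to lie in $\widehat I$, so $\widehat B_n$ is no longer rad-squared-zero and Lemma \ref{lemma1}(b) does not apply directly. To handle it I would combine Lemmas \ref{lemma5} and \ref{lemma6}, which already force derived wildness out of configurations where two incoming arrows to a vertex admit a non-trivial composition with an outgoing arrow in presence of a long tail, with the mutation operations of Lemma \ref{lemma3} applied at sinks and sources of the tail deep inside $\tilde Q^{(1)}$: the latter replaces $A$ by a derived-equivalent monomial algebra in which the offending paths are pushed into the ideal, reducing back to the rad-squared-zero situation treated above.
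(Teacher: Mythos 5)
Your main case reproduces the paper's argument: use Lemma \ref{lema} on the infinite branch to build a long rad-square-zero chain through $s(\alpha)$, adjoin the central vertex, $s(\beta)$, $t(\gamma)$ and the extra vertex $z$ coming from the failed condition, and observe that the resulting full subcategory is $K\Xi/\mathcal{R}_\Xi^2$ on a star with arms of lengths $1$, $2$ and $n$, which is neither Dynkin nor Euclidean for $n\geq 6$, so Lemma \ref{lemma1}(b) gives the contradiction. That part is fine and is exactly what the paper does (its displayed cases (1), (2), (1'), (2') are your configuration with $n=6$).

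The gap is in your residual case, which is the only genuinely delicate point. When the composition involving the extra arrow $\lambda$ is \emph{not} in $\widehat I$ (the paper's cases (3) and (3'), e.g.\ $\lambda\beta\notin\widehat I$), the paper performs a single positive mutation of the \emph{finite full subcategory} $K\Upsilon/L$ of the cover at the source end of $\lambda$; this reverses $\lambda$, produces a rad-square-zero algebra on the same non-Dynkin, non-Euclidean tree, and since derived wildness is invariant under derived equivalence, Lemma \ref{lemma1}(a) still applies to the original subcategory. Your proposed mechanism does not achieve this: Lemma \ref{lemma3} is a statement about blowing-ups commuting with mutations and is irrelevant here; mutations ``at sinks and sources of the tail deep inside $\tilde Q^{(1)}$'' take place at vertices of the Galois cover, not of $A$, and in any case cannot alter the offending composition at $s(\beta)$ or $t(\gamma)$, so they cannot ``push the offending paths into the ideal''; and if you instead mutate $A$ itself you have no guarantee of staying in the class of monomial algebras (mutation typically creates commutativity relations, as in Lemma \ref{lemma4}), so the fundamental-group covering and Lemma \ref{lemma1} would no longer be available. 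Lemmas \ref{lemma5} and \ref{lemma6} do not rescue this either: they concern loops and two-cycles at the branching vertex with both compositions in $I$, which is not the configuration arising here. The fix is the paper's: mutate the finite subcategory at the source (or sink) given by the free end of $\lambda$ and then quote invariance of derived wildness under derived equivalence before invoking Lemma \ref{lemma1}(a).
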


\begin{proof} We will show only the item (a), the item (b) is similar.

Since $I$ is generated by monomial relations, then $\widehat{Q}$ is a connected and locally finite tree. Moreover, since $I$ is an admissible ideal of $K Q$ and $Q$ is finite, then there is  an integer $m \geq 2$ such that $\mathcal{R}_{\widehat{Q}}^m \subset \widehat{I}$.
	
If $s(\beta)^+\not=\{\beta\}$ or $s(\beta)^-\not=\emptyset$, then since $\widehat{Q}_0^{(1)}$ is an infinite set, it follows from Lemma \ref{lema} that $K\tilde{Q}/\tilde{I}$ has a full subcategory $\tilde{A}$ such that $\tilde{A}$ is isomorphic to path algebra of some of the following quivers with relations:
	\[
	\begin{array}{l}
	\begin{tikzpicture}
	\node  (i) at (-2, 0) {(1)};
	\node  (o) at (0.5, -1.5) {and $J = \mathcal{R}_{\Delta}^2$;};
	\node  (0) at (-1, 0) {$\Delta$:};
	\node  (b) at (0, 0) {$\bullet$};
	\node  (c) at (1, 0) {$\bullet$};
	\node  (d) at (2, 0) {$\bullet$};
	\node  (e) at (3, 0) {$\bullet$};
	\node  (f) at (4, 0) {$\bullet$};
	\node  (g) at (5, 0) {$\bullet$};
	\node  (h) at (6, 0) {$\bullet$};
	\node  (i) at (7, 0) {$\bullet$};
	\node  (j) at (8, 0) {$\bullet$};
	\node  (k) at (6, -1) {$\bullet$};
	\draw [-] (b) -- node[above] { $\omega_{5}$}  (c);
	\draw [-] (c) -- node[above] { $\omega_{4}$} (d);
	\draw [-] (d) -- node[above] { $\omega_{3}$} (e);
	\draw [-] (e) -- node[above] { $\omega_{2}$} (f);
	\draw [-] (f) -- node[above] { $\omega_{1}$} (g);
	\draw [->] (g) -- node[above] { $\omega_0$} (h);
	\draw [->] (i) -- node[above] { $\beta$} (h);
	\draw [->] (i) -- node[above] { $\lambda$} (j);
	\draw [->] (h) -- node[right] { $\gamma$} (k);
	\end{tikzpicture}
	\\
	
	\begin{tikzpicture}
	\node  (i) at (-2, 0) {(2)};
	\node  (o) at (0.5, -1.5) {and $H = \mathcal{R}_{\Omega}^2$;};
	\node  (0) at (-1, 0) {$\Omega$:};
	\node  (b) at (0, 0) {$\bullet$};
	\node  (c) at (1, 0) {$\bullet$};
	\node  (d) at (2, 0) {$\bullet$};
	\node  (e) at (3, 0) {$\bullet$};
	\node  (f) at (4, 0) {$\bullet$};
	\node  (g) at (5, 0) {$\bullet$};
	\node  (h) at (6, 0) {$\bullet$};
	\node  (i) at (7, 0) {$\bullet$};
	\node  (j) at (8, 0) {$\bullet$};
	\node  (k) at (6, -1) {$\bullet$};
	\draw [-] (b) -- node[above] { $\omega_{5}$}  (c);
	\draw [-] (c) -- node[above] { $\omega_{4}$} (d);
	\draw [-] (d) -- node[above] { $\omega_{3}$} (e);
	\draw [-] (e) -- node[above] { $\omega_{2}$} (f);
	\draw [-] (f) -- node[above] { $\omega_{1}$} (g);
	\draw [->] (g) -- node[above] { $\omega_0$} (h);
	\draw [->] (i) -- node[above] { $\beta$} (h);
	\draw [->] (j) -- node[above] { $\lambda$} (i);
	\draw [->] (h) -- node[right] { $\gamma$} (k);
	\end{tikzpicture}
	\\
	
	\begin{tikzpicture}
	\node  (i) at (-2, 0) {(3)};
	\node  (o) at (1, -1.5) {and $L = \langle  \Upsilon_2 \backslash \{\lambda\beta \} \rangle$.};
	\node  (0) at (-1, 0) {$\Upsilon$:};
	\node  (b) at (0, 0) {$\bullet$};
	\node  (c) at (1, 0) {$\bullet$};
	\node  (d) at (2, 0) {$\bullet$};
	\node  (e) at (3, 0) {$\bullet$};
	\node  (f) at (4, 0) {$\bullet$};
	\node  (g) at (5, 0) {$\bullet$};
	\node  (h) at (6, 0) {$\bullet$};
	\node  (i) at (7, 0) {$\bullet$};
	\node  (j) at (8, 0) {$\bullet$};
	\node  (k) at (6, -1) {$\bullet$};
	\draw [-] (b) -- node[above] { $\omega_{5}$}  (c);
	\draw [-] (c) -- node[above] { $\omega_{4}$} (d);
	\draw [-] (d) -- node[above] { $\omega_{3}$} (e);
	\draw [-] (e) -- node[above] { $\omega_{2}$} (f);
	\draw [-] (f) -- node[above] { $\omega_{1}$} (g);
	\draw [->] (g) -- node[above] { $\omega_0$} (h);
	\draw [->] (i) -- node[above] { $\beta$} (h);
	\draw [->] (j) -- node[above] { $\lambda$} (i);
	\draw [->] (h) -- node[right] { $\gamma$} (k);
	\end{tikzpicture}
	\end{array}
	\]  
In the cases (1) and (2), we have that $A$ is derived wild by Lemma \ref{lemma1}, a contradiction. If $\tilde{A} \cong K \Upsilon / L$,  then since $s(\lambda) \in \Upsilon_0$ is a source and there is no relation $\rho \in \Upsilon_2\backslash \{\lambda\beta \}$ such that $s(\rho)=s(\lambda)$, then $\tilde{A}$ is derived equivalent to  $\mu_{t(\lambda)}^+(K \Upsilon / L) \cong K \Delta/ J$. Hence $A$ is derived wild by Lemma \ref{lemma1}, a contradiction. Therefore $s(\beta)^+ = \{\beta\}$ and $s(\beta)^-=\emptyset$. 
	
If $t(\gamma)^-\not=\{\gamma\}$ or $t(\gamma)^+\not= \emptyset$, then since $\widehat{Q}_0^{(1)}$ is an infinite set, it follows from Lemma \ref{lema} that $K\widehat{Q}/\widehat{I}$ has a full subcategory $\tilde{B}$ which is isomorphic to path algebra of some of the following quivers with relations:
	\[
	\begin{array}{l}
	\begin{tikzpicture}
	\node  (i) at (-2, 0) {(1')};
	\node  (o) at (0.5, -1.5) {and $J' = \mathcal{R}_{\Delta'}^2$;};
	\node  (0) at (-1, 0) {$\Delta'$:};
	\node  (b) at (0, 0) {$\bullet$};
	\node  (c) at (1, 0) {$\bullet$};
	\node  (d) at (2, 0) {$\bullet$};
	\node  (e) at (3, 0) {$\bullet$};
	\node  (f) at (4, 0) {$\bullet$};
	\node  (g) at (5, 0) {$\bullet$};
	\node  (h) at (6, 0) {$\bullet$};
	\node  (i) at (7, 0) {$\bullet$};
	\node  (j) at (8, 0) {$\bullet$};
	\node  (k) at (6, -1) {$\bullet$};
	\draw [-] (b) -- node[above] { $\omega_{5}$}  (c);
	\draw [-] (c) -- node[above] { $\omega_{4}$} (d);
	\draw [-] (d) -- node[above] { $\omega_{3}$} (e);
	\draw [-] (e) -- node[above] { $\omega_{2}$} (f);
	\draw [-] (f) -- node[above] { $\omega_{1}$} (g);
	\draw [->] (g) -- node[above] { $\omega_0$} (h);
	\draw [->] (h) -- node[above] { $\gamma$} (i);
	\draw [->] (j) -- node[above] { $\lambda$} (i);
	\draw [->] (k) -- node[right] { $\beta$} (h);
	\end{tikzpicture}
	\\
	
	\begin{tikzpicture}
	\node  (i) at (-2, 0) {(2')};
	\node  (o) at (0.5, -1.5) {and $H' = \mathcal{R}_{\Omega'}^2$;};
	\node  (0) at (-1, 0) {$\Omega'$:};
	\node  (b) at (0, 0) {$\bullet$};
	\node  (c) at (1, 0) {$\bullet$};
	\node  (d) at (2, 0) {$\bullet$};
	\node  (e) at (3, 0) {$\bullet$};
	\node  (f) at (4, 0) {$\bullet$};
	\node  (g) at (5, 0) {$\bullet$};
	\node  (h) at (6, 0) {$\bullet$};
	\node  (i) at (7, 0) {$\bullet$};
	\node  (j) at (8, 0) {$\bullet$};
	\node  (k) at (6, -1) {$\bullet$};
	\draw [-] (b) -- node[above] { $\omega_{5}$}  (c);
	\draw [-] (c) -- node[above] { $\omega_{4}$} (d);
	\draw [-] (d) -- node[above] { $\omega_{3}$} (e);
	\draw [-] (e) -- node[above] { $\omega_{2}$} (f);
	\draw [-] (f) -- node[above] { $\omega_{1}$} (g);
	\draw [->] (g) -- node[above] { $\omega_0$} (h);
	\draw [->] (h) -- node[above] { $\gamma$} (i);
	\draw [->] (i) -- node[above] { $\lambda$} (j);
	\draw [->] (k) -- node[right] { $\beta$} (h);
	\end{tikzpicture}
	\\
	\begin{tikzpicture}
	\node  (i) at (-2, 0) {(3')};
	\node  (o) at (1, -1.5) {and $L' = \langle   \Upsilon'_2 \backslash\{\gamma\lambda\} \rangle$.};
	\node  (0) at (-1, 0) {$\Upsilon'$:};
	\node  (b) at (0, 0) {$\bullet$};
	\node  (c) at (1, 0) {$\bullet$};
	\node  (d) at (2, 0) {$\bullet$};
	\node  (e) at (3, 0) {$\bullet$};
	\node  (f) at (4, 0) {$\bullet$};
	\node  (g) at (5, 0) {$\bullet$};
	\node  (h) at (6, 0) {$\bullet$};
	\node  (i) at (7, 0) {$\bullet$};
	\node  (j) at (8, 0) {$\bullet$};
	\node  (k) at (6, -1) {$\bullet$};
	\draw [-] (b) -- node[above] { $\omega_{5}$}  (c);
	\draw [-] (c) -- node[above] { $\omega_{4}$} (d);
	\draw [-] (d) -- node[above] { $\omega_{3}$} (e);
	\draw [-] (e) -- node[above] { $\omega_{2}$} (f);
	\draw [-] (f) -- node[above] { $\omega_{1}$} (g);
	\draw [->] (g) -- node[above] { $\omega_0$} (h);
	\draw [->] (h) -- node[above] { $\gamma$} (i);
	\draw [->] (i) -- node[above] { $\lambda$} (j);
	\draw [->] (k) -- node[right] { $\beta$} (h);
	\end{tikzpicture}
	\end{array}
	\]  
Again, this implies that $A$ is derived wild, a contradiction. Therefore  $t(\gamma)^-=\{\gamma\}$ and $t(\gamma)^+= \emptyset$.
\end{proof}

\begin{lemma}\label{lemma9}
Let $A=KQ/I$ be a derived tame quadratic string algebra where $Q$ is not a tree. Suppose that there are $\alpha, \beta, \gamma, \delta \in Q_1$ and $x \in Q_0$ such that $t(\alpha)= t(\beta)=s(\gamma)=s(\delta)=x$.
\begin{itemize}
	\item[(a)]If the vertices $s(\alpha), s(\beta), x, t(\gamma)$ and $t(\delta)$ are pairwise distinct,\linebreak $\alpha\gamma, \beta\gamma, \alpha\delta \in I$ but $\beta\delta \notin I$, then $s(\alpha)^+=\{\alpha\} $, $s(\alpha)^-=\emptyset$, $t(\gamma)^+=\emptyset$ and $t(\gamma)^-=\{\gamma\}$. In particular, $x \in E_1$.
	\item [(b)]If the vertices $s(\alpha), s(\beta), x$ and $t(\gamma)$ are pairwise distinct, $s(\beta)=t(\delta)$, $\alpha\gamma, \beta\gamma, \alpha\delta, \delta\beta \in I$ but $\beta\delta \notin I$, then $s(\alpha)^+=\{\alpha\} $, $s(\alpha)^-=\emptyset$, $t(\gamma)^+=\emptyset$ and $t(\gamma)^-=\{\gamma\}$. In particular, $x \in E_2$.
	\end{itemize}
\end{lemma}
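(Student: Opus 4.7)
The plan is to lift the local configuration at $x$ to the universal Galois covering $K\widehat Q/\widehat I$ and apply Lemma~\ref{lemma13}. Since $Q$ is not a tree, $\widehat Q$ is a connected locally finite tree with infinitely many vertices (the remark just before Lemma~\ref{lemma1}). Fix a lift $\tilde x$ of $x$ and the corresponding lifts $\tilde\alpha,\tilde\beta,\tilde\gamma,\tilde\delta$ of the four arrows at $x$; let $T_\alpha, T_\beta, T_\gamma, T_\delta$ be the four connected components of $\widehat Q\setminus\{\tilde x\}$ containing $\tilde{s(\alpha)}, \tilde{s(\beta)}, \tilde{t(\gamma)}, \tilde{t(\delta)}$ respectively. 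Because $\tilde x$ has degree $4$ in $\widehat Q$, at least one of these components must be infinite.

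For part~(a), the favorable case is $T_\beta$ infinite. I take the full subcategory of $\widehat A$ on $T_\beta\cup\{\tilde x,\tilde{s(\alpha)},\tilde{t(\gamma)}\}$; this inherits the three-arrow shape of Lemma~\ref{lemma13} once we identify its $(\alpha,\beta,\gamma)$ with our $(\beta,\alpha,\gamma)$ and set $\tilde Q^{(1)}=T_\beta$, $\tilde Q^{(2)}=\{\tilde{s(\alpha)}\}$, $\tilde Q^{(3)}=\{\tilde{t(\gamma)}\}$ (the quadratic relations $\tilde\alpha\tilde\gamma,\tilde\beta\tilde\gamma\in\widehat I$ are in place by hypothesis, and the whole $\tilde\delta$-branch is simply discarded). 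Lemma~\ref{lemma13}(a) then yields exactly $s(\alpha)^+=\{\alpha\}$, $s(\alpha)^-=\emptyset$, $t(\gamma)^+=\emptyset$, $t(\gamma)^-=\{\gamma\}$, whence $x\in E_1$.

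The main work will be to rule out the three remaining cases, $T_\alpha$ infinite, $T_\gamma$ infinite, or $T_\delta$ infinite, each of which forces an extra arrow at $s(\alpha)$, $t(\gamma)$ or $t(\delta)$ and hence violates the desired conclusion. In each of these cases I intend to exhibit a derived-wild full subcategory of $\widehat A$ and transfer wildness to $A$ via Lemma~\ref{lemma1}(a), contradicting the derived tameness of $A$. The construction is: apply Lemma~\ref{lema} inside the infinite branch to extract a chain of vertices whose full subcategory is isomorphic to $K\Gamma_0/\mathcal R_{\Gamma_0}^2$ for $\Gamma_0$ a path of length $\geq 5$, then adjoin $\tilde x$ together with its three other neighbors. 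The resulting underlying graph is a $4$-pronged star with one long prong, strictly larger than $\tilde D_4$, and so is neither Dynkin nor Euclidean. The expected main obstacle is that the non-relation $\beta\delta\notin I$ prevents this subcategory from being a bare $K\Delta/\mathcal R_\Delta^2$ algebra, so Theorem~\ref{rad} cannot be invoked directly; the workaround is to mimic the tilting-plus-mutation argument of Lemma~\ref{lemma8}, performing a negative mutation at the sink $\tilde{t(\delta)}$ and a positive mutation at the resulting source so as to kill the troublesome composition $\tilde\beta\tilde\delta$ and end up with a $K\Delta'/\mathcal R_{\Delta'}^2$ with $\Delta'$ non-Dynkin non-Euclidean, hence derived wild by Theorem~\ref{rad}.

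For part~(b) the argument is structurally the same. The only differences are that $\tilde{s(\beta)}$ and $\tilde{t(\delta)}$ are now distinct lifts of the common base vertex $v=s(\beta)=t(\delta)$ (they are distinct vertices of $\widehat Q$ and still lie in the distinct components $T_\beta$ and $T_\delta$), and the extra hypothesis $\delta\beta\in I$ is automatically lifted to $\widehat I$; both observations follow from the defining properties of a Galois covering by the fundamental group. Neither of them affects the Lemma~\ref{lemma13} application in the $T_\beta$-infinite case, nor the wild-subcategory construction in the remaining cases, so the same proof goes through and produces $x\in E_2$.
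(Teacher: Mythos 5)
Your case analysis contains a genuine error: the case where the branch $T_\delta$ at $t(\delta)$ is infinite cannot be ruled out, and your plan to derive a contradiction (a derived wild full subcategory of the covering) in that case is doomed. The conclusion of the lemma imposes conditions only on $s(\alpha)$ and $t(\gamma)$; the vertex $t(\delta)$ is allowed to carry further arrows in the definition of $E_1$ (it is a ``$\bullet$'' in Definition \ref{defi01}(1), not a sink), so an extra arrow at $t(\delta)$ does not ``violate the desired conclusion'' as you claim. Worse, this case actually occurs in derived tame algebras: in the paper's own example of a gqs algebra, the vertex $4\in E_1$ has $\alpha_4=\delta$, $\beta_4=\gamma$, $\gamma_4=\lambda$, $\delta_4=\rho$, and the branch of the universal cover hanging at $t(\rho)=7$ runs into the unwound cycle at $9,10$ and is infinite, while the branch at $s(\gamma)=3$ is finite; since this algebra is derived tame (Theorem A), no wildness argument can succeed when only $T_\delta$ is infinite. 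The correct treatment, and the one the paper uses, is that $T_\delta$ infinite is a \emph{second favorable} case: one applies Lemma \ref{lemma13} dually (to the configuration $\alpha,\gamma,\delta$ with $\alpha\gamma,\alpha\delta\in I$, equivalently to $A^{op}$) to obtain exactly $s(\alpha)^+=\{\alpha\}$, $s(\alpha)^-=\emptyset$, $t(\gamma)^+=\emptyset$, $t(\gamma)^-=\{\gamma\}$. So the structure of the proof must be: rule out $T_\alpha$ and $T_\gamma$ infinite by wildness, conclude $T_\beta$ or $T_\delta$ is infinite, and in \emph{either} of those cases read off the conclusion from Lemma \ref{lemma13} (directly, resp.\ dually).

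Your remaining steps are essentially the paper's: the $T_\beta$-infinite case via Lemma \ref{lemma13}(a) with the roles of $\alpha$ and $\beta$ exchanged is correct, and the exclusion of $T_\alpha$ (and, via the opposite algebra, $T_\gamma$) infinite is done in the paper by using Lemma \ref{lema} to extract a radical-square-zero chain in the infinite branch and adjoining the star at the lift of $x$, obtaining precisely the ten-vertex configuration with all length-two relations except $\beta\delta$, whose wildness is the content of Lemma \ref{lemma8} (tilting plus mutations), transferred to $A$ by Lemma \ref{lemma1}. Your observation that the surviving composition $\beta\delta$ prevents a direct appeal to Theorem \ref{rad} and requires the tilting/mutation argument is sound, and your remarks for part (b) about the two distinct lifts of $s(\beta)=t(\delta)$ are fine; but until the $T_\delta$ case is rerouted through the dual of Lemma \ref{lemma13} rather than a (false) wildness claim, the proof is incomplete.
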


\begin{proof} We only prove (a), the proof of (b) is similar.
	 
Let $K\widehat{Q}/\widehat{I}$ be the Galois covering of $A$. Since $Q$ has cycles and $I$ is generated by monomial relations, then $\widehat{Q}$ is a connected and locally finite tree with infinite vertices. Moreover, since $Q$ is a finite quiver and $I$ is an admissible ideal of $KQ$, then there is $m \geq 2$ such that $\mathcal{R}_Q^m \subset I$, hence $\mathcal{R}_{\widehat{Q}}^m \subset \widehat{I}$. 
		
By construction, $\widehat{Q}$ has the following shape
\begin{center}
	\begin{tikzpicture}
	\node  (x) at (-1.5, 1) {$\widehat{Q}$:};
	\node  (a) at (0, 1) {$\bullet$};
	\node  (b) at (0, -1) {$\bullet$};
	\node  (c) at (1, 0) {$\bullet$};
	\node  (d) at (2, 1) {$\bullet$};
	\node  (e) at (2, -1) {$\bullet$};
	\draw [->] (a) -- node[above] { $\alpha$}  (c);
	\draw [->] (b) -- node[above] { $\beta$} (c) ;
	\draw [->] (c) -- node[above] { $\gamma$} (d);
	\draw [->] (c) -- node[above] { $\delta$} (e);
	\draw[dashed] (-0.5,1) node {$\widehat{Q}^{(1)}$} (-0.5,1) circle (0.5);
	\draw[dashed] (-0.5,-1) node {$\widehat{Q}^{(2)}$} (-0.5,-1) circle (0.5);
	\draw[dashed] (2.5,1) node {$\widehat{Q}^{(3)}$} (2.5,1) circle (0.5);
	\draw[dashed] (2.5,-1) node {$\widehat{Q}^{(4)}$} (2.5,-1) circle (0.5);
\end{tikzpicture}
\end{center}
where for $1\leq i \leq 4$, $\widehat{Q}^{(i)}$ is a full and connected  subquiver of $\widehat{Q}$ such that \linebreak $\widehat{Q}_0^{(i)} \cap \widehat{Q}_0^{(j)}=\emptyset$ if $i\not=j$.  Moreover, since $\widehat{Q}$ has an infinite number of vertices, at last one of the $\widehat{Q}^{(i)}$'s must have an infinite many of vertices. 
		
If $\widehat{Q}_0^{(1)}$ is an infinite set, then by Lemma \ref{lema} we can choice vertices of $\widehat{Q}$ such that they define a full subcategory $\tilde{A}=K\tilde{Q}/\tilde{I}$ where 
		\begin{center}
			\begin{tikzpicture}
			\node  (q) at (-0.5, 0) {$\tilde{Q}$:};
			\node  (0) at (0, 0) {$\bullet$};
			\node  (1) at (1, 0) {$\bullet$};
			\node  (2) at (2, 0) {$\bullet$};
			\node  (3) at (3, 0) {$\bullet$};
			\node  (4) at (4, 0) {$\bullet$};
			\node  (5) at (5, 0) {$\bullet$};
			\node  (6) at (5, -2) {$\bullet$};
			\node  (7) at (6, -1) {$\bullet$};
			\node  (8) at (7, 0) {$\bullet$};
			\node  (9) at (7, -2) {$\bullet$};
			\draw [-] (0) -- node[above] { $\omega_5$} (1);
			\draw [-] (1) -- node[above] { $\omega_4$} (2);
			\draw [-] (2) -- node[above] { $\omega_3$} (3);
			\draw [-] (3) -- node[above] { $\omega_2$} (4);
			\draw [-] (4) -- node[above] { $\omega_1$} (5);
			\draw [->] (5) -- node[above] { $\omega_0$} (7);
			\draw [->] (6) -- node[above] { $\beta$} (7);
			\draw [->] (7) -- node[above] { $\gamma$} (8);
			\draw [->] (7) -- node[above] { $\delta$} (9);
			\end{tikzpicture}
		\end{center}
and $\tilde{I}=\langle\tilde{Q}_2  \backslash \{\beta\delta\} \rangle$. By Lemma \ref{lemma8} we get that $K\tilde{Q}/\tilde{I}$ is derived wild, thus $A$ is derived wild  by Lemma \ref{lemma1}, a contradiction. Hence $\widehat{Q}_0^{(1)}$ is a finite set. Moreover, the same argument applied to $A^{op}$ shows that $\widehat{Q}_0^{(3)}$ is a finite set. Therefore $\widehat{Q}_0^{(2)}$ or $\widehat{Q}_0^{(4)}$ must be an infinite set.
		
If $\widehat{Q}_0^{(2)}$ is an infinite set, then $s(\alpha)^+=\{\alpha\}$, $s(\alpha)^-= \emptyset$, $t(\gamma)^-=\{\gamma\}$ and $t(\gamma)^+= \emptyset$ by Lemma \ref{lemma13}(a). Dually, if $\widehat{Q}_0^{(4)}$ is an infinite set, then $s(\alpha)^+=\{\alpha\}$, $s(\alpha)^-= \emptyset$, $t(\gamma)^-=\{\gamma\}$ and $t(\gamma)^+= \emptyset$.  
\end{proof}

\begin{lemma}\label{lemma10}
Let $A=KQ/I$ be a derived tame quadratic string algebra where $Q$ is not a tree. Suppose that there are $\alpha, \beta, \gamma  \in Q_1$ and $x \in Q_0$ such that the vertices $s(\alpha), s(\beta), x$ and $ t(\gamma)$ are pairwise distinct, $\alpha\gamma, \beta\gamma \in I$, $x^+=\{\gamma\}$ and $t(\alpha)= t(\beta)=s(\gamma)=x$.
\begin{itemize}
\item[(a)] If $t(\gamma)^-\not=\{\gamma\}$ or $t(\gamma)^+=\emptyset$, then $s(\alpha)^+=\{\alpha\}$, $s(\alpha)^-=\emptyset$, $s(\beta)^+=\{\beta\}$ and $s(\beta)^-=\emptyset$. In particular, $x \in E_3$.	
\item[(b)] If $s(\alpha)^+ \not=\{\alpha\}$ or $s(\alpha)^-=\emptyset$, then $s(\beta)^+=\{\beta\}$, $s(\beta)^-=\emptyset$, $t(\gamma)^+=\emptyset$ and $t(\gamma)^-=\{\gamma\}$. In particular, $x \in E_4$.
\end{itemize}
\end{lemma}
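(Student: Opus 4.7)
The plan is to mimic the Galois covering argument of Lemma \ref{lemma9}: I would pass to the Galois covering $K\widehat{Q}/\widehat{I}$ of $A$ and invoke Lemma \ref{lemma13}. Since $Q$ has cycles and $I$ is monomial, $\widehat{Q}$ is a connected, locally finite tree with infinitely many vertices. Because $A$ is a quadratic string algebra with $\alpha,\beta \in x^-$ (distinct, by the hypothesis that $s(\alpha)\neq s(\beta)$) and $x^+ = \{\gamma\}$, the vertex $x$ has exactly three arrows incident to it in $Q$, so any lift of $x$ in $\widehat{Q}$ has exactly three incident arrows. Cutting $\widehat{Q}$ at this lift therefore produces three connected, pairwise vertex-disjoint full branches $\widehat{Q}^{(1)}, \widehat{Q}^{(2)}, \widehat{Q}^{(3)}$ attached at the lifts of $s(\alpha), s(\beta), t(\gamma)$ respectively, giving a full subquiver of $\widehat{Q}$ of exactly the shape required by Lemma \ref{lemma13}. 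Since $\widehat{Q}_0$ is infinite, at least one of the three branches is infinite.

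To prove part (a), I would assume $t(\gamma)^- \neq \{\gamma\}$ or $t(\gamma)^+ \neq \emptyset$ and argue by elimination. If $\widehat{Q}^{(1)}$ were infinite, Lemma \ref{lemma13}(a) would force $t(\gamma)^- = \{\gamma\}$ and $t(\gamma)^+ = \emptyset$, contradicting the hypothesis. The same proof with $\alpha$ and $\beta$ exchanged (a valid symmetry since they play identical roles in the setup of Lemma \ref{lemma13}) rules out $\widehat{Q}^{(2)}$ being infinite. Hence $\widehat{Q}^{(3)}$ is infinite, and applying Lemma \ref{lemma13}(b) yields $s(\alpha)^+ = \{\alpha\}$, $s(\alpha)^- = \emptyset$, $s(\beta)^+ = \{\beta\}$, $s(\beta)^- = \emptyset$. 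Together with the standing assumptions ($t(\alpha)=t(\beta)=s(\gamma)=x$, the four vertices pairwise distinct, $x^+=\{\gamma\}$, and $\alpha\gamma,\beta\gamma \in I$) this verifies every clause of Definition \ref{defi01}(3), so $x \in E_3$.

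Part (b) runs by the dual elimination: under the assumption $s(\alpha)^+ \neq \{\alpha\}$ or $s(\alpha)^- \neq \emptyset$, an infinite $\widehat{Q}^{(3)}$ would force $s(\alpha)^+ = \{\alpha\}$ and $s(\alpha)^- = \emptyset$ via Lemma \ref{lemma13}(b), and an infinite $\widehat{Q}^{(2)}$ would force the same via the symmetric version of Lemma \ref{lemma13}(a). Therefore $\widehat{Q}^{(1)}$ is infinite, and Lemma \ref{lemma13}(a) delivers $s(\beta)^+ = \{\beta\}$, $s(\beta)^- = \emptyset$, $t(\gamma)^- = \{\gamma\}$, $t(\gamma)^+ = \emptyset$. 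Relabeling $\beta$ as the arrow playing the role of $\alpha_x$ (and $\alpha$ as $\beta_x$) in Definition \ref{defi01}(4), every clause is met and $x \in E_4$.

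All the technical content lives in Lemma \ref{lemma13} (and, through it, Lemma \ref{lema}); the only real work here is the pigeonhole bookkeeping, arranged so that each hypothesis — on $t(\gamma)$ in (a), on $s(\alpha)$ in (b) — precisely eliminates two of the three branches and singles out the third. I do not expect any genuine obstacle beyond verifying that the symmetric version of Lemma \ref{lemma13}(a) is indeed valid by inspection of its proof.
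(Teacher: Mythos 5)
Your proposal is correct and follows essentially the same route as the paper: pass to the Galois covering, split $\widehat{Q}$ into the three branches at $s(\alpha)$, $s(\beta)$, $t(\gamma)$, use the pigeonhole argument to decide which branch is infinite, and apply Lemma \ref{lemma13} (together with its evident $\alpha\leftrightarrow\beta$ symmetric form, which the paper uses implicitly) to read off the conclusions and verify the clauses of Definition \ref{defi01}(3) and (4).
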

\begin{proof}
Let $K\widehat{Q}/\widehat{I}$ be the Galois covering of $A$. Since $Q$ has cycles and $I$ is generated by monimial relations, then $\widehat{Q}$ is a connected and locally finite tree with an infinite number of vertices. Moreover, since $Q$ is a finite quiver and $I$ is an admissible ideal of $KQ$, then there is $m \geq 2$ such that $\mathcal{R}_Q^m \subset I$, hence $\mathcal{R}_{\widehat{Q}}^m \subset \widehat{I}$. 

By construction, $\widehat{Q}$ has the following shape
\begin{center}
	\begin{tikzpicture}
	\node  (x) at (1, 1) {$\widehat{Q}$:};
	\node  (a) at (2.5, 1) {$\bullet$};
	\node  (b) at (2.5, -1) {$\bullet$};
	\node  (c) at (4, 0) {$\bullet$};
	\node  (d) at (5, 0) {$\bullet$};
	\draw [->] (a) -- node[above] { $\alpha$}  (c);
	\draw [->] (b) -- node[above] { $\beta$} (c) ;
	\draw [->] (c) -- node[above] { $\gamma$} (d);
	\draw[dashed] (2,-1) node {$\widehat{Q}^{(2)}$} (2,-1) circle (0.5);
	\draw[dashed] (2,1) node {$\widehat{Q}^{(1)}$} (2,1) circle (0.5);
	\draw[dashed] (5.5,0) node {$\widehat{Q}^{(3)}$} (5.5,0) circle (0.5);
	\end{tikzpicture}
\end{center}
where for $i=1, 2, 3$, $\widehat{Q}^{(i)}$ is a full and connected subquiver of $\widehat{Q}$ such that \linebreak $\widehat{Q}_0^{(i)} \cap \widehat{Q}_0^{(j)}=\emptyset$ if $i\not=j$.  Moreover, since $\widehat{Q}$ has an infinite many of vertices, then at last one of the  $\widehat{Q}^{(i)}$'s must have an infinite many of vertices. 

If $t(\gamma)^-\not=\{\gamma\}$ or $t(\gamma)^+\not=\emptyset$, then $\widehat{Q}_0^{(1)}$ and $\widehat{Q}_0^{(2)}$ are finite sets by Lemma \ref{lemma13} (a). Hence  $\widehat{Q}_0^{(3)}$ is an infinite set, and so $s(\alpha)^+=\{\alpha\}$, $s(\alpha)^-=\emptyset$, $s(\beta)^+=\{\beta\}$ and $s(\beta)^-=\emptyset$ by Lemma \ref{lemma13} (b). Thus the item (a) follows.

If $s(\alpha)^+ \not=\{\alpha\}$ or $s(\alpha)^-\not=\emptyset$, then   $\widehat{Q}_0^{(2)}$ and $\widehat{Q}_0^{(3)}$ are finite sets by Lemma \ref{lemma13}. Hence $\widehat{Q}_0^{(1)}$ is an infinite set, and so $s(\beta)^+=\{\beta\}$, $s(\beta)^-=\emptyset$, $t(\gamma)^+=\emptyset$ and $t(\gamma)^-=\{\gamma\}$ by Lemma \ref{lemma13} (a). Thus, the item (b) is done.
\end{proof} 

\begin{lemma}\label{lemma11}
Let $A=KQ/I$ be a derived tame quadratic string algebra where $Q$ is not a tree. Suppose that there are $\alpha, \gamma, \delta  \in Q_1$  and $x \in Q_0$ such that $t(\alpha)= s(\delta)=s(\gamma)=x$, the vertices $s(\alpha), x, t(\delta)$ and $ t(\gamma)$ are pairwise distinct, $\alpha\gamma, \alpha\delta \in I$ and  $x^-=\{\alpha\}$.
\begin{itemize}
	\item[(a)] If $s(\alpha)^+ \not=\{\alpha\}$ or $s(\alpha)^-=\emptyset$, then $t(\delta)^+=\emptyset$, $t(\delta)^-=\{\delta\}$, $t(\gamma)^+=\emptyset$ and $t(\gamma)^-=\{\gamma\}$. In particular, $x \in E_5$.
	\item[(b)] If $t(\gamma)^-\not=\{\gamma\}$ or $t(\gamma)^+\not=\emptyset$, then $s(\alpha)^+=\{\alpha\}$, $s(\alpha)^-=\emptyset$, $t(\delta)^+=\emptyset$ and $t(\delta)^+=\{\delta\}$. In particular, $x \in E_6$.
\end{itemize}
\end{lemma}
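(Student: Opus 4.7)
The strategy is to mirror the proof of Lemma \ref{lemma10}, adapted to the ``dual'' configuration in which $x$ has one incoming arrow $\alpha$ and two outgoing arrows $\gamma,\delta$. First, let $K\widehat{Q}/\widehat{I}$ denote the Galois covering of $A$ induced by $\pi_1(Q,I)$. Since $Q$ has cycles and $I$ is admissible, $\widehat{Q}$ is a connected, locally finite tree with infinitely many vertices, and $\mathcal{R}_{\widehat{Q}}^m\subseteq\widehat{I}$ for some $m\geq 2$. By construction, each chosen lift of $x$ sits in a neighborhood where $\widehat{Q}$ decomposes as three full, connected, pairwise vertex-disjoint subquivers $\widehat{Q}^{(1)},\widehat{Q}^{(2)},\widehat{Q}^{(3)}$ attached respectively at (the lifts of) $s(\alpha)$, $t(\gamma)$, $t(\delta)$, joined in the middle by the arrows $\alpha,\gamma,\delta$ as prescribed. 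Since $\widehat{Q}$ has infinitely many vertices, at least one of the $\widehat{Q}^{(i)}$ must be infinite.

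Next I would establish, in parallel with Lemma \ref{lemma13}, the two dual implications:
\begin{itemize}
\item[(a$'$)] if $\widehat{Q}_0^{(2)}$ is infinite then $s(\alpha)^+=\{\alpha\}$, $s(\alpha)^-=\emptyset$, $t(\delta)^+=\emptyset$, $t(\delta)^-=\{\delta\}$ (and symmetrically for $\widehat{Q}^{(3)}$ with $\gamma$ and $\delta$ interchanged);
\item[(b$'$)] if $\widehat{Q}_0^{(1)}$ is infinite then $t(\gamma)^+=\emptyset$, $t(\gamma)^-=\{\gamma\}$, $t(\delta)^+=\emptyset$, $t(\delta)^-=\{\delta\}$.
\end{itemize}
These may be obtained either directly, by repeating the argument of Lemma \ref{lemma13} verbatim — use Lemma \ref{lema} to produce a long reduced walk inside the infinite region, attach the ``forbidden'' extra arrow at the suspect vertex, apply a positive or negative mutation if the resulting source/sink prevents recognizing a subcategory of the form considered in Lemma \ref{lemma8}, and conclude derived wildness via Lemma \ref{lemma1}, contradicting the derived tameness of $A$ — or, more economically, by invoking Lemma \ref{lemma13} itself for $A^{op}$, whose local hypothesis matches precisely the configuration of Lemma \ref{lemma11}. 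Keeping careful track of whether each attached arrow requires a mutation (so that the resulting full subcategory has the shape of Lemma \ref{lemma8}) is the main technical step.

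Finally, (a) and (b) follow by contrapositive. For (a), the hypothesis that $s(\alpha)^+\neq\{\alpha\}$ or $s(\alpha)^-\neq\emptyset$ makes the conclusion of (a$'$) fail, so $\widehat{Q}_0^{(2)}$ is finite; similarly, by the $\gamma\leftrightarrow\delta$ symmetric version of (a$'$), $\widehat{Q}_0^{(3)}$ is finite. Hence $\widehat{Q}_0^{(1)}$ must be infinite, and (b$'$) yields $t(\gamma)^+=\emptyset$, $t(\gamma)^-=\{\gamma\}$, $t(\delta)^+=\emptyset$, $t(\delta)^-=\{\delta\}$, placing $x$ in $E_5$. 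For (b), the hypothesis that $t(\gamma)^-\neq\{\gamma\}$ or $t(\gamma)^+\neq\emptyset$ forces $\widehat{Q}_0^{(1)}$ finite (contrapositive of (b$'$)) and $\widehat{Q}_0^{(3)}$ finite (the $\gamma\leftrightarrow\delta$ version of (a$'$)); thus $\widehat{Q}_0^{(2)}$ is infinite, and (a$'$) produces the restrictions on $s(\alpha)$ and $t(\delta)$ that place $x$ in $E_6$ (up to the labeling of $\gamma$ and $\delta$).
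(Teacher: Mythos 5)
Your proposal is correct, but it takes the long way around compared with the paper, whose entire proof of this lemma is the single observation that it suffices to apply Lemma \ref{lemma10} to $A^{op}$: under the standard duality, the configuration of Lemma \ref{lemma11} (one incoming arrow $\alpha$ at $x$, two outgoing arrows $\gamma,\delta$, relations $\alpha\gamma,\alpha\delta$) becomes exactly the configuration of Lemma \ref{lemma10} for $A^{op}$, and the two items correspond. What you do instead is unroll that dualization one level down: you redo the proof of Lemma \ref{lemma10} in the dual setting, decomposing the Galois covering into the three regions at $s(\alpha)$, $t(\gamma)$, $t(\delta)$, proving dual analogues (a$'$), (b$'$) of Lemma \ref{lemma13} (either by repeating its argument or by applying Lemma \ref{lemma13} to $A^{op}$), and then running the same finiteness bookkeeping on the regions. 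The logic is sound and your case analysis matches the paper's (your conclusions place $x$ in $E_5$ resp.\ $E_6$ exactly as required, and you even correctly repair the typo $t(\delta)^+=\{\delta\}$ in item (b), which should read $t(\delta)^-=\{\delta\}$). What the paper's route buys is brevity and no need to re-verify the covering decomposition; what your route buys is that the only dualization needed is at the level of the purely local Lemma \ref{lemma13}, with the region argument written out explicitly. In either formulation one quietly uses that derived tameness passes from $A$ to $A^{op}$ (the paper already relies on this in Lemma \ref{lemma9} and in its own one-line proof here), so you should state that fact once rather than leave it implicit.
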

\begin{proof}
It is sufficient to apply the  Lemma \ref{lemma10} to $A^{op}$.
\end{proof}

Now we can prove the Theorems B and D. \\

\noindent\textbf{Proof of Theorem B.} If $A$ is a gqs algebra, then $A$ is derived tame by Theorem A. 

In the other hand, if $A$ is a derived tame string quadratic algebra and let $x$ be the non gentle vertex of $A$, then one of the conditions $(1)$, $(2)$, $(3)$ or $(4)$ described in Proposition \ref{prop2} holds. 

If $x$ satisfy the condition (1), then there is arrows $\alpha, \beta, \gamma \in Q_1$ such that $t(\alpha)=t(\beta)=s(\gamma)=x$, $x^+=\{x\}$, $s(\alpha), s(\beta), x$ and $t(\gamma)$ are pairwise distinct, and $\alpha\gamma, \beta\gamma \in I$. Since $Q$ is connected and it has cycles, then there is an arrow $\lambda \in Q_1\backslash \{\alpha,\beta,\gamma\}$ such that $s(\lambda) \in \{s(\alpha), s(\beta),t(\gamma)\}$   or $t(\lambda) \in \{s(\alpha), s(\beta),t(\gamma)\}$. Thus $x$ belongs to $E_3$ or $E_4$ by Lemma \ref{lemma10}.

If $x$ satisfy the condition (2), we have a dual situation to the condition (1). Hence, it follows from Lemma \ref{lemma11} that $x$ belongs to $E_5$ or $E_6$. 

Finally, if $x$ satisfy the condition (3) or (4), then follows from Lemma \ref{lemma9} that $x$ belongs to $E_1$ or $E_2$, respectively. 

Therefore, we have that every non gentle vertex of $A$ is an exceptional vertex. Hence $A$ is a gqs algebra.\hspace*{8,5cm} $\square$ \\   

\noindent\textbf{Proof of Theorem D.} Let $A$ be a quadratic string algebra.

Suppose that $A$ is derived tame. If $A$ is a tree, then it follows from Theorem 1.1 of \cite{MR1868357} that its Euler form $\chi_A$ is non negative. But, if $A$ is not a tree, then $A$ is gqs algebra by Theorem B.

Conversely, if $A$ is tree and $\chi_A$ is non negative, then  $A$ is derived tame by Theorem 1.1 of \cite{MR1868357}. Moreover, if $A$ is gqs algebra, then  $A$ is derived tame by Theorem A.  \hspace*{12.3cm} $\square$

\section*{Acknowledgments:} The author would like to thank Viktor Bekkert for helpful discussions.
\bibliographystyle{amsplain}
\bibliography{paper}

\end{document}